\documentclass[a4paper,leqno]{amsart}

\usepackage{amsmath}
\usepackage{amssymb}
\usepackage{amsfonts}
\usepackage{enumerate}
\usepackage{amsthm}
\usepackage{esint}
\usepackage{bbm}
\usepackage{tabularx}
\usepackage{graphicx}
\usepackage{epstopdf}

\newtheorem{theorem}{Theorem}[section]
\newtheorem{lem}[theorem]{Lemma}

\newtheorem{prop}[theorem]{Proposition}

\theoremstyle{definition}
\newtheorem{defi}[theorem]{Definition}
\newtheorem{rem}[theorem]{Remark}
\newtheorem{exam}[theorem]{Example}

\DeclareMathOperator*{\argmin}{arg\,min}

\renewcommand{\eqref}[1]{\textnormal{(\ref{#1})}}

\numberwithin{equation}{section}

\newcommand{\rmi}{\mathrm{i}}

\newcommand{\trinorm}{|\hskip -1pt\|}
\newcommand{\R}{\mathbb{R}}
\newcommand{\N}{\mathbb{N}}

\newcommand{\SSS}{\mathbb{S}}
\newcommand{\C}{\mathbb{C}}
\newcommand{\nn}{\mathcal{N}}

\title[Reconstruction of inhomogeneities]{Discretization and regularization for the reconstruction of inhomogeneities by scattering measurements}

\author[D.~Di Donato]{Daniela Di Donato}
\author[L.~Rondi]{Luca Rondi}

\address[D.~Di Donato]{Dipartimento di Matematica, Universit\`a degli Studi di Pavia, Italy}
\email{daniela.didonato@unipv.it}

\address[L.~Rondi]{Dipartimento di Matematica, Universit\`a degli Studi di Pavia, Italy}
\email{luca.rondi@unipv.it}

\date{}

\begin{document}

\begin{abstract}
We consider the inverse problem of reconstructing inhomogeneities by performing a finite number of scattering measurements of acoustic type in the
time-harmonic setting. We set up the reconstruction as a fully discrete variational problem with regularization. Such a problem depends on a variety of parameters, that is, the number of measurements, the regularization parameter and the discretization parameter, namely the size of the mesh on which we discretize the unknown coefficients of the Helmholtz type equation modelling our physical system.
We show, through a convergence analysis, that one can carefully choose these parameters in such a way that the solution to this discrete regularized minimum problem is a good approximation of the looked-for solution to the inverse problem.

\bigskip

\noindent\textbf{AMS 2020 Mathematics Subject Classification:} 
35R30 (primary);
49J45 35P25 (secondary)

\medskip

\noindent \textbf{Keywords:} inverse problems, acoustic scattering, regularization, discretization
\end{abstract}

\maketitle

\setcounter{section}{0}
\setcounter{secnumdepth}{2}

\section{Introduction}
We consider the following inverse acoustic scattering problem. We aim at reconstructing an unknown inhomogeneity, or penetrable obstacle, by performing acoustic scattering measurements.
Namely, we send one or more time-harmonic acoustic incident waves which are scattered by the presence of the inhomogeneity and we measure the so-called far-field of the corresponding scattered waves. As incident waves we restrict our choice to planar waves, that is, to the incident field
\begin{equation}\label{incidentfieldintro}
u^i(x)=u^i(k,d;x)=e^{k\rmi d\cdot x}\quad\text{for any }x\in \R^N,
\end{equation}
where $d\in \SSS^{N-1}$ is the \emph{direction of propagation} and $k>0$ is the \emph{wavenumber}. The direct problem corresponds to find the total field $u$ and the scattered field $u^s$
solvlng
\begin{equation}\label{pbScatteringintro}
\left\{
\begin{array}{ll}
\mathrm{div} (\sigma \nabla u) +k^2 \eta u= 0 &\text{in } \R^N \\
u=u^i + u^s &\text{in } \R^N \\
\displaystyle \lim_{r\to +\infty } r^{\frac{N-1}{2}} \left(\frac{\partial u^s}{\partial r} - kiu^s\right) =0& r=|x|.
\end{array}
\right.
\end{equation} 
The last condition at infinity, the so-called Sommerfeld radiation condition, implies that the scattered field is radiating and that its asymptotic behavior at infinity is characterized
by its \emph{far-field pattern} $u_{\infty}(\sigma,\eta;k,d;\hat{x})$, $\hat{x}\in\mathbb{S}^{N-1}$ being the outgoing direction.
The inhomogeneity is modelled through the coefficients $\sigma$ and $\eta$. We assume that the medium is homogeneous and isotropic outside our penetrable obstacle, which is
contained in a mildly smooth bounded open set $\Omega$ (we do not ask it to be connected, that is, there might be more than one obstacle at the same time). In our case, we consider $\Omega$ to be known.
Inside the penetrable obstacle,
the unknown medium, modelled by the coefficients $\sigma_0$ and $\eta_0$, differs from the background one and may be nonhomogeneous and anisotropic.
We consider some minimal a priori regularity assumptions on the unknown $\sigma_0$ and $\eta_0$ in $\Omega$. Fixed for simplicity the wavenumber $k>0$,
we measure the corresponding far-field pattern $u_{\infty}(\sigma,\eta;k,d_i;\hat{x}_i)$ for a finite number of pairs $(d_i,\hat{x}_i)$, $i=1,\ldots,n$. That is, we perform a finite number of experiments (we send a finite number of incident waves) and we measure the corresponding far-field pattern at a finite number of outgoing directions. We assume that each of these measurements is corrupted by a measurement error bounded by the \emph{noise level} $\varepsilon>0$.
Our collected data, $\mathcal{F}^{\varepsilon}_{meas}=(\mathcal{F}^{\varepsilon}_{meas}(1),\ldots,\mathcal{F}^{\varepsilon}_{meas}(n))$ , is therefore a vector in $\C^n$ and the inverse problem consists in finding (an approximation of) the unknown coefficients $\sigma_0$ and $\eta_0$ in $\Omega$.

Such an inverse problem is linked to sonar techniques, ultrasound tomography and several other applications in science and technology, we refer to \cite{CK98} and \cite{Isak06} to a thorough discussion on this and other related scattering problems. We just recall that the breakthrough uniqueness results for the determination of inhomogeneities by scattering measurements is in \cite{Isak}. Understanding what is the scattering effect of a given inhomogeneity has been the focus of much recent investigation, we refer to the survey \cite{Cak-Vog} and its references for 
a discussion on this significant line of research.

Here we are more interested in the reconstruction issue rather than uniqueness results.
 We set up a variational method of reconstruction to which we add a suitable Tikhonov type regularization $R$, with \emph{regularization coefficient} $a$, to tackle the well-known ill-posedness of the inverse problem. About regularization techniques for inverse problems we refer for instance to \cite{EHN96}.
 Moreover, we consider a discretized version of our minimization problem by restricting the coefficients to a finite dimensional space $Y^h$, using finite elements on a suitable mesh which is characterized by the \emph{mesh size parameter} $h$.
  Namely, we solve
 \begin{equation}\label{minpbmintro}
 \min\left\{\sum_{i=1}^n|u_{\infty}(\sigma,\eta;k,d_i;\hat{x}_i)-\mathcal{F}^{\varepsilon}_{meas}(i)|^2 +aR(\sigma,\eta): (\sigma,\eta)\in Y^h \right\}
 \end{equation}
which is a fully discrete minimization problem. About $R$, we consider a total variation penalization on $\eta$ and, when $N=2$, on $\sigma$ as well. For $N\geq 3$, instead, we consider a Lipschitz constant penalization on $\sigma$.
Actually, since it is much easier to compute, we replace the Lipschitz constant by the essentially equivalent $W^{1,\infty}$-seminorm.
The a priori assumption on $(\sigma_0,\eta_0)$ is just that $R(\sigma_0,\eta_0)$ is bounded. These 
are in some sense minimal regularity requirements. In fact,
when $N\geq 3$, Lipschitz regularity is crucial to have unique continuation for the Helmholtz type equation and, in turn, to guarantee existence and uniqueness of the solution to the direct problem
\eqref{pbScatteringintro}.

We are interested in answering the following question: how to choose, in terms of the noise level $\varepsilon$, the number of measurements, and their location, the regularization coefficient $a$ and the discretization coefficient $h$
so that the solution to our discrete and regularized minimum problem \eqref{minpbmintro} is a good approximation of a solution $(\sigma_{\infty},\eta_{\infty})$ to our inverse problem, that is,
of $(\sigma_{\infty},\eta_{\infty})$ satisfying
$$u_{\infty}(\sigma_{\infty},\eta_{\infty};k,d;\hat{x})=u_{\infty}(\sigma_0,\eta_0;k,d;\hat{x})\quad\text{for any }(d,\hat{x})\in \mathbb{S}^{N-1}\times  \mathbb{S}^{N-1}.$$
We note that we are approximating with a finite number of data the continuous setting where measurements are collected for all incident directions and all outgoing directions.

We show that this is the case when, for suitable parameters $0<\gamma<2$, $0<\alpha<1/2$ and $0<\beta<2$, we choose $a=a(\varepsilon)=\tilde{a}\varepsilon^{\gamma}$, for some positive constant $\tilde{a}$,
$h=h(\varepsilon)=\varepsilon^{2/\alpha}$, and $n$ is of the order of $\varepsilon^{-\beta}$
(with uniform distribution of $d_i$ and $\hat{x}_i$ in an open subset of $\mathbb{S}^{N-1}$), see Theorem~\ref{mainthm} and Example~\ref{dataexample} for a complete and more general statement. It is important to note that the dependence of these quantities on the noise level is always of polynomial type, which is somewhat surprising given the
exponential ill-posedness nature of the inverse problem.
The proof is based on a careful convergence analysis inspired by $\Gamma$-convergence techniques. We recall that the first use of $\Gamma$-convergence techniques to deal with the
regularization of nonlinear inverse problems may be found in \cite{Ron08}.

Since \cite{Riv-Bar-Ob}, it is well-known that handling the discretization for an inverse problem is not an easy and straightforward task. This difficulty is here
enhanced by the fact that we let change not only the mesh size parameter but also the regularization parameter and the number of measurements at the same time. In other words, we have to keep under control simultaneously the approximations due to the discretization, to the regularization and to the discrete measurements. This type of convergence analysis
has been carried out successfully for the electrical impedance tomography in a series of papers, \cite{R16} and \cite{FeR}. This is the first occasion when it is applied to scattering problems.

About the proof we point out the two main technical difficulties. The first one is to show that the direct problem is uniformly continuous, precisely H\"older continuous, with respect to the coefficients
. Here we improve, by making them quantitative, the stability results that has been obtained in \cite{Men-Ron}, \cite{LRX19} and  \cite{FR23} both for the acoustic and the electromagnetic scattering. The second challenge is the use of the $W^{1,\infty}$-seminorm as a regularization on $\sigma$. Given our mild regularity of $\Omega$, we have that a $W^{1,\infty}$-function is Lipschitz. However,
the Lipschitz constant is not that easy to compute, even if it can be bounded by a constant depending on $\Omega$ times the $W^{1,\infty}$-seminorm. Moreover,
 when we discretize a Lipschitz function using finite elements, it is not easy to control the Lipschitz constant of the interpolated function, see Remark~\ref{interLip}. However, this is possible if we discretize more regular functions, say $C^2$, see the second part
of Theorem~\ref{Ciarletestimteo}, in particular \eqref{Lipestimateinter}. In order to prove \eqref{Lipestimateinter}, an interesting technical result, Lemma~\ref{simplesso}, is proved.
To deal with $C^2$ functions and apply \eqref{Lipestimateinter}, we first need to extend the Lipschitz function to $\R^N$, using classical extension results, and suitably mollify it. Then one need to carefully choose the mollification parameter and the discretization parameter to obtain the correct approximation result of Lemma~\ref{Gammalimsuplemma}. Let us finally note that when extending a Lipschitz function outside an open set $\Omega$, if we wish to keep exactly the same $W^{1,\infty}$-seminorm we need to consider $\Omega$ convex. For this reason, in our main Theorem~\ref{mainthm}, we assume that any connected component of the support $\Omega$ of the penetrable obstacle is convex. However, this restrictive assumption may be easily dropped if we are satisfied with a just slightly weaker result, see Remark~\ref{finalremark}.

The plan of the paper is the following. In Section~\ref{prelsec} we collect the notation and well-known results that will be repeatedly used in the sequel. Section~\ref{contsec} contains the first important result of the paper, that is, H\"older continuity of the direct problem with respect to the coefficients describing the inhomogeneity, Theorem~\ref{holdercontthm} and Proposition~\ref{admprop}.
Finally, in Section~\ref{mainsec} we describe and prove our main result, Theorem~\ref{mainthm}, namely the approximation of the solution to the inverse problem by solutions to discrete and regularized variational problems.

\section{Notation and preliminaries}\label{prelsec}
Throughout the paper the integer $N\geq 2$ denotes the space dimension. Occasionally, $M$, $N_1$, $M_1$ denote positive integers.
For simplicity we drop the dependence of any constant on $N$.
With $\mathcal{H}^{s}$, $s\geq 0$, we denote the $s$-dimensional Hausdorff measure. On $\R^N$, $\mathcal{H}^{N}=\mathcal{L}$ is the $N$-dimensional Hausdorff measure or Lebesgue measure.

For any $x\in \R^N$ and any $r>0$, we denote with $B_r(x)$ the open ball with center $x$ and radius $r$. We often write simply $B_r$ to denote $B_r(0)$. For any vector $V\in\R^N$, $|V|$ denotes its Euclidean norm.

We denote with $\mathcal{M}^{N_1\times M_1} (\R)$ the space of real-valued $N_1\times M_1$ matrices, whereas
$\mathcal{M}^{N\times N}_{sym} (\R)$ is the space of symmetric matrices in $\mathcal{M}^{N\times N} (\R)$. With $\mathbb{I}_{N}$ we denote the $N\times N$ identity matrix. For any $A\in \mathcal{M}^{N_1\times M_1} (\R)$, we denote with $\|A\|$ the norm of $A$ as a linear operator from $\R^{M_1}$ to $\R^{N_1}$. Instead, with $|A|$ we denote the norm of $A$ as a vector in
$\R^{N_1M_1}$, that is, we identify $\mathcal{M}^{N_1\times M_1} (\R)$ with the Euclidean space $\R^{N_1M_1}$ in this case.

For any constants $\lambda_0\leq \lambda_1$, we call
$$\mathcal{M}(\lambda_0,\lambda_1)=\{A\in \mathcal{M}^{N\times N}_{sym} (\R): \lambda_0|\xi|^2\leq A\xi\cdot\xi\leq \lambda_1|\xi|^2\text{ for any }\xi\in\R^N\}.$$
Note that $\mathcal{M}(\lambda_0,\lambda_1)$ is convex and closed, with respect to any norm on $\mathcal{M}^{N\times N}_{sym} (\R)$.

If $F$ is an $\R^N$-valued measurable function on a measurable subset $E$ of $\R^M$, for any $1\leq p\leq +\infty$ we call
$$\|F\|_{L^p(E)}=\|F\|_{L^p(E,\R^N)}:=\|\,|F|\,\|_{L^p(E)}.$$
Analogously, if $A$ is an $\mathcal{M}^{N_1\times M_1}(\R)$-valued measurable function on a measurable subset $E$ of $\R^M$, for any $1\leq p\leq +\infty$ we call
$$\|A\|_{L^p(E)}=\|A\|_{L^p(E,\mathcal{M}^{N_1\times M_1}(\R))}:=\|\,\|A\|\,\|_{L^p(E)}$$
and
$$\trinorm A\trinorm_{L^p(E)}=\trinorm A\trinorm_{L^p(E,\mathcal{M}^{N_1\times M_1}(\R))}:=\|\,|A|\,\|_{L^p(E)}.$$
These two norms are clearly equivalent, however in certain cases one is best suited than the other.

We say that $\Omega\subset \R^N$ is a \emph{domain} if it is a connected open set.

\subsection{Lipschitz functions and Lipschitz open sets}
We begin with the following definition.
\begin{defi} 
Let $E$ be any subset of $\R^M$ and let $f:E\to\R^{M_1}$. We say that $f$ is \emph{Lipschitz continuous} on $E$ if there exists a constant $L\in \R$ such that
$$|f(x_1)-f(x_2)|\leq L|x_1-x_2|\quad\text{for any }x_1,\, x_2\in E.$$
We define the \emph{Lipschitz constant of} $f$ on $E$ as
$$\mathrm{Lip}(f)=\mathrm{Lip}(f,E):=\sup\left\{\frac{|f(x_1)-f(x_2)|}{|x_1-x_2|}:\ x_1,\, x_2\in E,\ x_1\neq x_2\right\}.$$

If $A$ is an $\mathcal{M}^{N_1\times M_1} (\R)$-valued function on a subset $E$ of $\R^M$,
$A$ is Lipschitz if and only if all its entries $a_{i,j}$, $i=1,\ldots,N_1$, $j=1,\ldots,M_1$, are and 
we call 
$$\mathrm{Lip}(A)=\mathrm{Lip}(A,E):=\sup\left\{\frac{|A(x_1)-A(x_2)|}{|x_1-x_2|}:\ x_1,\, x_2\in E,\ x_1\neq x_2\right\}.$$
\end{defi}

We make use of the following classical extension results.
\begin{theorem}\label{extensionthm}
Let $E$ be any subset of $\R^M$ and let $f:E\to\R^{M_1}$ be a Lipschitz function on $E$. Then there exists a Lipschitz function $\tilde{f}:\R^M\to\R^{M_1}$ on $\R^M$
such that $\tilde{f}(x)=f(x)$ for any $x\in E$ and $\mathrm{Lip}(\tilde{f},\R^M)=\mathrm{Lip}(f,E)$.
Moreover, if $f:E\to [\delta_0,\delta_1]$, for some constants $\delta_0\leq \delta_1$, we can choose $\tilde{f}$ such that $\delta_0\leq \tilde{f}\leq \delta_1$ on $\R^M$.

Clearly the same extension property holds for any $\mathcal{M}^{N_1\times M_1} (\R)$-valued function.
Moreover, if $A:E\to \mathcal{M}(\lambda_0,\lambda_1)$, we can choose the extension $\tilde{A}$ such that
$\tilde{A}:\R^M\to \mathcal{M}(\lambda_0,\lambda_1)$ as well.
\end{theorem}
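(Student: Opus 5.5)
The statement is essentially Kirszbraun's theorem combined with a projection argument, and I would prove it in two steps: first extend with the same Lipschitz constant, then enforce the constraints by composing with a projection. For the first part, that is, the existence of $\tilde f:\R^M\to\R^{M_1}$ with $\tilde f=f$ on $E$ and $\mathrm{Lip}(\tilde f,\R^M)=\mathrm{Lip}(f,E)$, I would invoke Kirszbraun's theorem. It applies because the Lipschitz constant is computed with the Euclidean norms on both $\R^M$ and $\R^{M_1}$. When $M_1=1$ one can be explicit and use the McShane formula
$$\tilde f(x)=\inf_{y\in E}\bigl(f(y)+L|x-y|\bigr),\qquad L=\mathrm{Lip}(f,E),$$
which is finite, agrees with $f$ on $E$, and is $L$-Lipschitz as an infimum of $L$-Lipschitz functions. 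Since $\tilde f$ extends $f$, the reverse inequality $\mathrm{Lip}(\tilde f,\R^M)\geq \mathrm{Lip}(f,E)$ is trivial. The genuinely vector-valued case, with Euclidean target and the same constant, is exactly where Kirszbraun's theorem is needed. Componentwise McShane would lose a factor $\sqrt{M_1}$, and I regard this as the only deep ingredient; I would quote it rather than reprove it.

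For the constrained versions I would use the nearest-point projection $P_K$ onto a nonempty closed convex set $K$ of a Euclidean space. The map $P_K$ is $1$-Lipschitz and equals the identity on $K$. If $f(E)\subset K$, let $\tilde f$ be a Kirszbraun extension and set $\hat f=P_K\circ\tilde f$. Then $\hat f=f$ on $E$, $\hat f$ takes values in $K$, and $\mathrm{Lip}(\hat f,\R^M)\leq \mathrm{Lip}(\tilde f,\R^M)=\mathrm{Lip}(f,E)$, with equality because $\hat f$ extends $f$. For scalar $f$ with values in $[\delta_0,\delta_1]$, take $K=[\delta_0,\delta_1]$, so that $P_K$ is simply the truncation $\max(\delta_0,\min(\tilde f,\delta_1))$.

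For matrix-valued $A$, I would identify $\mathcal{M}^{N_1\times M_1}(\R)$ with $\R^{N_1M_1}$ via the norm $|\cdot|$. This is precisely the norm used in the definition of $\mathrm{Lip}(A)$, so the previous paragraph applies verbatim. For $A:E\to\mathcal{M}(\lambda_0,\lambda_1)$, with $N_1=M_1=N$, I would extend $A$ as a map into $\R^{N^2}$ and then project onto $K=\mathcal{M}(\lambda_0,\lambda_1)$. As noted in the preliminaries, this set is convex and closed, and it is nonempty when $\lambda_0\leq\lambda_1$ since it contains $\lambda_0\mathbb{I}_N$. Because $K$ sits inside the subspace of symmetric matrices, the projection automatically produces symmetric values, so the resulting extension maps $\R^M$ into $\mathcal{M}(\lambda_0,\lambda_1)$ with the same Lipschitz constant.

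The main obstacle is thus the first step in the vector-valued case, handled by citation of Kirszbraun. The constraint-preserving part is routine once one observes that all the constraint sets involved are closed and convex in a Euclidean norm compatible with the definition of the Lipschitz constant.
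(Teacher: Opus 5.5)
Your proposal is correct and follows essentially the same route as the paper: McShane for the scalar case, Kirszbraun for the vector- and matrix-valued cases (with the Frobenius norm $|\cdot|$ matching the definition of $\mathrm{Lip}(A)$), and then the constraint is enforced by composing with the $1$-Lipschitz nearest-point projection onto the closed convex set $[\delta_0,\delta_1]$ or $\mathcal{M}(\lambda_0,\lambda_1)$. That projection is what the paper means by ``convexity and closedness of the bound and the Hilbert structure of the codomain''; you simply make explicit the step the paper leaves implicit.
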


\begin{proof}
See \cite{MS34} for the scalar case and \cite{Kir} for the vector-valued case. The fact that the extensions satisfy the same bounds as the original function is an easy consequence of the
convexity and closedness of the bound and the Hilbert structure of the codomain.
\end{proof}

\begin{defi}
We say that an open set $\Omega \subset \R^N$ is \emph{Lipschitz} or has a \emph{Lipschitz boundary} if for any $x\in \partial \Omega$ there exist a neighbourhood $U_x$ of $x$ and a Lipschitz function $\psi :\R^{N-1} \to \R$ such that, up to a rigid change of coordinates, we have
\begin{equation*}
    \Omega \cap U_x =\{ y=(y_1, \ldots, y_{N-1}, y_N)\,:\ y_N <\psi (y_1, \ldots, y_{N-1}) \}.
\end{equation*}

We say that a bounded open set $\Omega \subset \R^N$ belongs to the class $ \mathcal{A} (r,L,R)$ if $\Omega \subset B_R$ and its boundary is Lipschitz with constants $r$ and $L$ in the following sense: for any $x\in \partial \Omega$   we can choose $U_x = B_r(x)$ and $\psi$ with Lipschitz constant bounded by $L$.
\end{defi}

When $\Omega$ is a Lipschitz open set,  we usually denote with $\nu$ the exterior unit normal to $\partial \Omega$.
We also note that
for any bounded open set  with Lipschitz boundary, there exist constants $r, L$ and $R$ such that $\Omega \in \mathcal{A} (r,L,R)$.

\begin{defi} 
Let $\Omega\subset \R^M$ be an open set.

We say that $u\in W^{1,\infty}(\Omega,\R^{M_1})$ if $u\in L^{\infty}(\Omega,\R^{M_1})$ and
its distributional Jacobian $\nabla u$ belongs to $L^{\infty}(\Omega,\mathcal{M}^{M_1\times M} (\R))$. We define
$$|u|_{W^{1,\infty}(\Omega)}=\|\nabla u\|_{L^{\infty}(\Omega)}\quad\text{and}\quad\|u\|_{W^{1,\infty}(\Omega)}=\|u\|_{L^{\infty}(\Omega)}+|u|_{W^{1,\infty}(\Omega)}.$$
The same definition holds when $A$ is an $\mathcal{M}^{N_1\times M_1}(\R)$-valued function
on $\Omega$ by identifying $\mathcal{M}^{N_1\times M_1} (\R)$ with $\R^{N_1M_1}$, namely
$$|A|_{W^{1,\infty}(\Omega)}=\|\nabla A\|_{L^{\infty}(\Omega)}\quad\text{and}\quad\|A\|_{W^{1,\infty}(\Omega)}=\trinorm A\trinorm_{L^{\infty}(\Omega)}+|A|_{W^{1,\infty}(\Omega)},$$
with $\nabla A$ considered as a $ N_1M_1\times M$ matrix.

We say that $u\in BV(\Omega)$, that is, it is a function of bounded variation, if $u\in L^1(\Omega)$ and its distributional gradient $D u$ is a vector-valued Radon measure with finite total variation. We define
$$|u|_{BV(\Omega)}=TV(u)=|Du|(\Omega)\quad\text{and}\quad\|u\|_{BV(\Omega)}=\|u\|_{L^1(\Omega)}+|u|_{BV(\Omega)}.$$

If $F$ is an $\R^N$-valued function on $\Omega$, $F\in BV(\Omega,\R^N)$ if and only if all its components $F_i$, $i=1,\ldots,N$, belong to $BV(\Omega)$ respectively. Setting the vector
$\tilde{V}=(|F_1|_{BV(\Omega)},\ldots,|F_N|_{BV(\Omega)})$,
we call 
$|F|_{BV(\Omega)}=|\tilde{V}|$.

Similarly, if $A$ is an $\mathcal{M}^{N_1\times M_1}(\R)$-valued function 
on $\Omega$, $A\in BV(\Omega;\mathcal{M}^{N_1\times M_1} (\R))$ if and only if all its entries $a_{i,j}$, $i,\,j=1,\ldots,N$, belong to $BV(\Omega)$. Setting the matrix 
$\tilde{M}=\left[|a_{ij}|_{BV(\Omega)} \right]_{i,\,j=1}^{N_1,M_1}$,
we call 
$|A|_{BV(\Omega}=\|\tilde{M}\|$ or, equivalently, $|A|_{BV(\Omega}=|\tilde{M}|$.
\end{defi}

We need the following classical result whose proof is omitted.
\begin{prop}
Let $\Omega$ be an open set in $\R^N$ and let $f:\Omega\to \R^M$ be a Lipschitz function. Then $f\in W^{1,\infty}_{loc}(\Omega)$ and
$|u|_{W^{1,\infty}(\Omega)}\leq \mathrm{Lip}(f)$.

Let $\Omega\in\mathcal{A} (r,L,R)$ and let $u\in W^{1,+\infty}(\Omega)$. Then $u$ (up to a representative) is Lipschitz and there exists a constant $C$, depending on $r$, $L$ and $R$ only, such that
$\mathrm{Lip}(u)\leq C|u|_{W^{1,\infty}(\Omega)}$.

Finally, if $\Omega$ is a bounded convex open set, then $u\in W^{1,+\infty}(\Omega)$ if and only if $u$ (up to a representative) is Lipschitz and in this case
$\mathrm{Lip}(u)  =|u|_{W^{1,\infty}(\Omega)}$.
\end{prop}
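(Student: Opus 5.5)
The proof has three parts, taken in the order of the statement. For the first part, local to global in the interior, I would fix a ball $B_\rho(x_0)$ with $\overline{B_\rho(x_0)}\subset\Omega$ and mollify: $f_\epsilon=f*\varphi_\epsilon$ is smooth on $B_\rho(x_0)$ for $\epsilon$ small, and $\mathrm{Lip}(f_\epsilon)\leq\mathrm{Lip}(f)$. Hence $|\nabla f_\epsilon|$, viewed as the operator norm of the Jacobian, is at most $\mathrm{Lip}(f)$ pointwise, since for a $C^1$ map $\nabla f_\epsilon(x)v=\lim_{t\to0}(f_\epsilon(x+tv)-f_\epsilon(x))/t$. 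The sequence $\nabla f_\epsilon$ is then bounded in $L^\infty$, so a subsequence converges weakly$^*$, and the limit is the distributional gradient of $f$, because $f_\epsilon\to f$ uniformly. Weak$^*$ lower semicontinuity of the $L^\infty$ norm of the operator norm (a convex function) gives $\|\nabla f\|_{L^\infty}\leq\mathrm{Lip}(f)$ on each ball, hence on $\Omega$. Boundedness of $f$ holds on compact sets, which gives $W^{1,\infty}_{loc}$.

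For the third part, the convex case, the inequality $|u|_{W^{1,\infty}}\leq\mathrm{Lip}(u)$ is the first part. For the reverse inequality I would mollify $u$, say after extending by zero and restricting to $\Omega_\delta=\{x:\mathrm{dist}(x,\partial\Omega)>\delta\}$, which is again convex. On $\Omega_\delta$ we have $u_\epsilon\in C^\infty$ with $\|\nabla u_\epsilon\|\leq\|\nabla u\|_{L^\infty}$ whenever $\epsilon<\delta$. For $x_1,x_2\in\Omega_\delta$ the segment between them stays in $\Omega_\delta$, so
$$|u_\epsilon(x_1)-u_\epsilon(x_2)|=\left|\int_0^1\nabla u_\epsilon(x_2+t(x_1-x_2))(x_1-x_2)\,dt\right|\leq|u|_{W^{1,\infty}(\Omega)}|x_1-x_2|.$$
Letting $\epsilon\to0$, $u_\epsilon\to u$ at Lebesgue points. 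This shows that the precise representative is Lipschitz with constant $|u|_{W^{1,\infty}}$ on a full-measure subset of $\Omega_\delta$, and therefore on all of $\Omega_\delta$ after redefinition by continuity. Letting $\delta\to0$ exhausts $\Omega$, and consistency of the representatives follows from continuity.

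For the second part, Lipschitz domains, I would repeat the argument locally. In each cylinder-like neighbourhood $B_r(x)\cap\Omega$, the region below a Lipschitz graph is bi-Lipschitz equivalent to a half-ball via $(y',y_N)\mapsto(y',y_N-\psi(y'))$, with constants depending on $L$. Alternatively, any two points of $\Omega\cap B_{r/2}(x)$ can be joined inside $\Omega$ by a path of length at most $C(L)|x_1-x_2|$: go down vertically by an amount comparable to $|x_1-x_2|$, across, and back up. Integrating along such polygonal paths the smooth approximants, which are controlled as in the convex case, gives a local Lipschitz bound $C(L)|u|_{W^{1,\infty}}$ for pairs at distance less than $r/2$. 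For pairs at distance at least $r/2$, I would use that $\Omega\subset B_R$ is covered by a number of balls bounded in terms of $r$, $L$ and $R$. Each connected component of $\Omega$ then has intrinsic diameter bounded by $C(r,L,R)$, so a chain argument bounds $|u(x_1)-u(x_2)|$ by $C|u|_{W^{1,\infty}}\leq C'|u|_{W^{1,\infty}}|x_1-x_2|/r$ when both points lie in the same component.

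The main obstacle is this global step. Points in different connected components cannot be linked by a path, so the bound needs a lower bound on the distance between components; this distance is at least of order $r$, because of the uniform graph structure in $B_r(x)$. We must also control the oscillation of $u$ across components, and this seems impossible using the seminorm alone, since $u$ could be a different constant on each component. So I expect the statement should be read either componentwise or with the full norm, and I would verify which reading the paper needs before finalizing. The intrinsic-diameter bound within a component is then the main technical point.
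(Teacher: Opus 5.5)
The paper gives no proof of this proposition (it calls it classical and omits the proof), so there is nothing to compare yours with. Your argument is the standard one and it is sound: mollification with pointwise control of the smoothed gradient and weak$^*$ lower semicontinuity for the first part, and integration of the mollified functions along segments or short polygonal paths at positive distance from $\partial\Omega$ for the other two.

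Your worry about the second part is justified. $\mathcal{A}(r,L,R)$ does not require connectedness, and the paper explicitly works with disconnected $\Omega$, so the statement is false as written. For instance, $\Omega=B_1(0)\cup B_1(3e_1)$ belongs to $\mathcal{A}(1/2,1,5)$. The function $u$ equal to $0$ on $B_1(0)$ and to $1$ on $B_1(3e_1)$ has $|u|_{W^{1,\infty}(\Omega)}=0$ but $\mathrm{Lip}(u)=1$. Two versions do hold:
\begin{itemize}
\item the componentwise estimate $\mathrm{Lip}(u,\Omega_j)\leq C|u|_{W^{1,\infty}(\Omega_j)}$ on each connected component $\Omega_j$;
\item the global estimate with $\|u\|_{W^{1,\infty}(\Omega)}$ in place of the seminorm, which uses the separation $c(L)r$ between components that you mention.
\end{itemize}
The componentwise version is the one the paper actually uses. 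In the proof of Lemma~\ref{Gammalimsuplemma} it reduces to a single connected component before passing between $W^{1,\infty}$-seminorms and Lipschitz constants. Remark~\ref{noextensionrem} needs the same correction.

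Two details in your local step need adjusting.
\begin{itemize}
\item The down--across--up path stays inside $B_r(x)$, where the graph description is valid, only if both points lie in $B_\rho(x)$ with $\rho=c(L)r$, e.g.\ $\rho=r/(5(1+L))$. The path then has length at most $(3+2L)|x_1-x_2|$. With $\rho=r/2$, the descent by $L|x_1'-x_2'|$ below the lower point can leave $B_r(x)$ when $L$ is large.
\item For a pair at distance less than $\rho$ whose segment is not contained in $\Omega$, centre the ball at a point of $\partial\Omega$ on that segment.
\end{itemize}
With this scale, your covering and chaining argument bounds the intrinsic diameter of each component by $C(L)(R/r)^N r$, which gives the componentwise constant. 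Finally, in the ``if'' direction of the third part, state that $u\in L^\infty(\Omega)$ because $\Omega$ is bounded, since the first part only gives $W^{1,\infty}_{loc}$.
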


\begin{rem}\label{noextensionrem}
Let $\Omega\in\mathcal{A} (r,L,R)$ and let $u\in W^{1,\infty}(\Omega)$. Then $u$ can be extended outside $\Omega$ to a $W^{1,\infty}(\R^N)$ function, which we still call $u$, such that
$\|u\|_{L^{\infty}(\R^N)}=\|u\|_{L^{\infty}(\Omega)}$ and 
$\mathrm{Lip}(u,\R^N)=|u|_{W^{1,\infty}(\R^N)}\leq C|u|_{W^{1,\infty}(\Omega)}$ for some constant $C$. However, unless $\Omega$ is convex, the constant $C$ might be strictly greater than $1$.
\end{rem}

We also need the following lower semicontinuity properties.
\begin{lem}\label{propLip2}
Let $E$ be any bounded set in $\R^M$, $E$ measurable and with positive measure.

Let $\{f_n\}_{n\in\N}$ be a sequence of Lipschitz functions on $E$ with values in $\R^{M_1}$ such that, for some constant $C$, we have
$\mathrm{Lip}(f_n,E)\leq C$ for any $n\in\N$. If $f_n$ converges to $f$ strongly in $L^1(E)$ as $n\to +\infty$, then $f$ (up to a representative) is Lipschitz on $E$ and the following lower semicontinuity holds
    \begin{equation*}
        \mathrm{Lip}(f,E) \leq \liminf_{n\to +\infty} \mathrm{Lip} (f_n,E).
    \end{equation*}
\end{lem}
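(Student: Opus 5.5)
Since $f_n\to f$ in $L^1(E)$, I will pass to a subsequence along which the $\liminf$ of $\mathrm{Lip}(f_n,E)$ is attained as a limit and, simultaneously, $f_{n}(x)\to f(x)$ for every $x$ in a set $E_0\subset E$ with $\mathcal{L}(E\setminus E_0)=0$. This is possible because $L^1$ convergence gives a.e.\ convergence along a further subsequence, and passing to a subsequence does not increase the limit of the Lipschitz constants. Set $L=\lim_n \mathrm{Lip}(f_n,E)\leq C$ along this subsequence. For $x_1,x_2\in E_0$ we have
\begin{equation*}
|f(x_1)-f(x_2)|=\lim_{n}|f_n(x_1)-f_n(x_2)|\leq \liminf_n \mathrm{Lip}(f_n,E)\,|x_1-x_2| = L|x_1-x_2|.
\end{equation*}
Hence $f|_{E_0}$ is Lipschitz with $\mathrm{Lip}(f,E_0)\leq L$.

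Next I must produce a representative of $f$ defined on all of $E$ with the same constant. The natural route is to extend $f|_{E_0}$ by Theorem~\ref{extensionthm} (Kirszbraun) to $\tilde f:\R^M\to\R^{M_1}$ with $\mathrm{Lip}(\tilde f,\R^M)=\mathrm{Lip}(f,E_0)\leq L$. Since $\tilde f=f$ on $E_0$ and $E\setminus E_0$ is null, $\tilde f|_E$ is a representative of $f$, and $\mathrm{Lip}(\tilde f,E)\leq L$, which is the claimed lower semicontinuity.

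A subtle point is that the statement's $\mathrm{Lip}(f,E)$ depends on the representative, since $E$ is arbitrary measurable and need not be open. The sensible reading is that some representative is Lipschitz with constant at most the $\liminf$, and the construction above provides exactly that. Alternatively one could take the limit $g(x)=\lim f_n(x)$ wherever it exists on $E$. However, the $f_n$ are equicontinuous and uniformly bounded on the null-complement dense part only if $E_0$ is dense in $E$, which can fail for points of $E$ of zero density. This is why I prefer the extension argument, which sidesteps the issue.

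The only mildly delicate step is the bookkeeping of subsequences: first choose the subsequence realizing the $\liminf$, then extract the a.e.\ convergent sub-subsequence. The boundedness of $E$ and its positive measure play essentially no role, beyond guaranteeing that $L^1(E)$ convergence makes sense and that $E_0$ is nonempty. Uniform boundedness of the constants, $\mathrm{Lip}(f_n,E)\leq C$, ensures that $L$ is finite.
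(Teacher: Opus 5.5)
Your proof is correct, but it follows a different route from the paper. The paper extends each $f_n$ to $\overline{E}$ by uniform continuity. It then turns the $L^1$ bound into a uniform bound, which is where boundedness and positive measure of $E$ enter: $\sup_{\overline{E}}|f_n|\le \|f_n\|_{L^1(E)}/\mathcal{L}(E)+C\,\mathrm{diam}(E)$. Ascoli--Arzel\`a then gives a uniformly convergent subsequence, and the Lipschitz inequality passes to the uniform limit, which is the continuous representative on $\overline{E}$; no extension theorem is needed.

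You use only a.e.\ convergence along a subsequence plus Kirszbraun (Theorem~\ref{extensionthm}). You therefore need neither boundedness of $E$ nor compactness, at the price of a less canonical representative and no uniform convergence.

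Your subsequence bookkeeping (first realize the $\liminf$, then extract an a.e.\ convergent subsequence) is also what makes the argument valid for arbitrary measurable $E$. The paper instead claims that the whole sequence converges uniformly on $\overline{E}$ ``by Urysohn''. That requires the uniform limit to be determined by its a.e.\ values on $E$, and it fails, e.g., for $E=B_1\cup\{p\}$ with $|p|=2$, $f_n=0$ on $B_1$ and $f_n(p)=(-1)^n$. This does not affect the paper, since Lemma~\ref{propLip} only applies the result to cubes, where every full-measure subset is dense.

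For the same reason, your side remark should be sharpened: the problematic points are those of $E$ having a neighbourhood that meets $E$ in a null set. Zero density alone does not prevent a point from lying in $\overline{E_0}$.
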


\begin{proof} For any $n\in \N$, by uniform continuity we can extend uniquely  $f_n$ to $\overline{E}$, keeping the same Lipschitz constant. Convergence in $L^1(E)$, actually uniform boundedness in $L^1(E)$ would be enough, guarantees that there exists a constant $C_1$ such that the uniform norm of $f_n$ on $\overline{E}$ is bounded by $C_1$ for any $n\in\N$. By Ascoli-Arzel\`a Theorem, we have that up to a subsequence
$f_n$ converges uniformly to a function $\tilde{f}$ on $\overline{E}$. Clearly $\tilde{f}$ is Lipschitz on $\overline{E}$ and
is the (unique) extension by uniform continuity of $f$. By Urysohn, the  whole sequence $f_n$ converges to $\tilde{f}$ uniformly on $\overline{E}$ as $n\to +\infty$ and we can infer that
    \begin{equation*}
   \mathrm{Lip}(f,E)=      \mathrm{Lip}(\tilde{f},\overline{E}) \leq \liminf_{n\to +\infty} \mathrm{Lip} (f_n,\overline{E})=\liminf_{n\to +\infty} \mathrm{Lip} (f_n,E).
    \end{equation*}
    The proof is concluded.
   \end{proof}

\begin{lem}\label{propLip} Let $\Omega$ be any open set in $\R^N$.

Let $\{u_n\}_{n\in\N}$ be a sequence of $W^{1,\infty}_{loc}(\Omega,\R^{M_1})$ functions such that, for some constant $C$, we have
$|u_n|_{W^{1,\infty}(\Omega)}\leq C$ for any $n\in\N$. If $u_n$ converges to $u$ strongly in $L^1(\Omega)$ as $n\to +\infty$, then $u\in W^{1,\infty}_{loc}(\Omega,\R^{M_1})$ and the following lower semicontinuity holds
    \begin{equation*}
  |u|_{W^{1,\infty}(\Omega)} \leq \liminf_{n\to +\infty} |u_n|_{W^{1,\infty}(\Omega)}.
    \end{equation*}
\end{lem}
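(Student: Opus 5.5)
The plan is to argue locally, by duality, using the distributional definition of the gradient. Set $\ell=\liminf_{n\to+\infty}|u_n|_{W^{1,\infty}(\Omega)}$, which is finite since it is bounded by $C$. First pass to a subsequence, not relabelled, along which $|u_n|_{W^{1,\infty}(\Omega)}\to\ell$. Convergence in $L^1(\Omega)$ is preserved along the subsequence.

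The core step is a variational characterization of the $W^{1,\infty}$-seminorm. The quantity $|u|_{W^{1,\infty}(\Omega)}=\|\nabla u\|_{L^\infty(\Omega)}$ uses the operator norm of the $M_1\times N$ Jacobian. Its dual norm on matrices is the nuclear (trace) norm $\|\cdot\|_*$. Hence, for a distribution $u$, the seminorm is at most $\ell$ (and $\nabla u\in L^\infty$) if and only if
$$
\Big|\int_\Omega u\cdot \mathrm{div}\,\Phi\Big| \le \ell\int_\Omega \|\Phi\|_*\quad\text{for all }\Phi\in C^\infty_c(\Omega,\mathcal{M}^{M_1\times N}(\R)),
$$
where $(\mathrm{div}\,\Phi)_i=\sum_j\partial_j\Phi_{ij}$ and we take the supremum over the left-hand side.

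The "if" direction is the delicate part. If the inequality holds, the functional $\Phi\mapsto-\int u\cdot\mathrm{div}\,\Phi$ is bounded in the $L^1(\Omega,\|\cdot\|_*)$-norm on a dense subspace of $L^1$. By Hahn--Banach and the duality $L^1(\|\cdot\|_*)^*=L^\infty(\|\cdot\|)$, it is therefore represented by some $G\in L^\infty$ with $\|G\|_{L^\infty}\le\ell$. Then $\nabla u=G$ distributionally, so $u\in W^{1,\infty}_{loc}$ with $|u|_{W^{1,\infty}(\Omega)}\le\ell$. The duality for the pointwise norm holds because $\|\cdot\|$ and $\|\cdot\|_*$ are dual norms on the finite-dimensional space of matrices, combined with a measurable-selection or density argument. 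For the "only if" direction, integrate by parts and bound $|\nabla u:\Phi|\le\|\nabla u\|\,\|\Phi\|_*$ pointwise.

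With the characterization in hand, fix $\Phi$. For each $n$, since $u_n\in W^{1,\infty}_{loc}$, we have
$$
\Big|\int_\Omega u_n\cdot\mathrm{div}\,\Phi\Big|=\Big|\int_\Omega \nabla u_n:\Phi\Big|\le |u_n|_{W^{1,\infty}(\Omega)}\int_\Omega\|\Phi\|_*.
$$
The function $\mathrm{div}\,\Phi$ is bounded and compactly supported, so $L^1$ convergence lets us pass to the limit on the left. The right-hand side tends to $\ell\int_\Omega\|\Phi\|_*$. The characterization then gives the claim.

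The main obstacle is the "if" direction of the characterization. It needs exactly the matching of operator norm and trace norm, so that the constant is exactly $\ell$ and not an equivalent-norm multiple. A more elementary alternative avoids duality altogether: mollify, so that on $\Omega'\Subset\Omega$ one has $\|\nabla(u_n*\rho_\epsilon)\|\le|u_n|_{W^{1,\infty}}$ by convexity of the norm, and differences $(u_n*\rho_\epsilon)(x)-(u_n*\rho_\epsilon)(y)$ along segments in convex balls are bounded by $|u_n|_{W^{1,\infty}}|x-y|$. Passing $n\to\infty$ (locally uniform convergence for fixed $\epsilon$) and then $\epsilon\to0$ shows $u$ is locally Lipschitz on every ball in $\Omega$ with constant $\ell$. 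By the Proposition above, this gives $\|\nabla u\|\le\ell$ a.e. I would use this second route if the duality bookkeeping proves awkward.
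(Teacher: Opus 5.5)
Your proposal is correct, and your main duality route is genuinely different from the paper's.

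The paper proceeds through Lipschitz constants on convex pieces:
\begin{itemize}
\item It covers $\Omega$ by countably many cubes $Q_\ell$.
\item It uses convexity to identify $|u_n|_{W^{1,\infty}(Q_\ell)}$ with $\mathrm{Lip}(u_n,Q_\ell)$.
\item It applies Lemma~\ref{propLip2}, which combines Ascoli--Arzel\`a on $\overline{Q_\ell}$ with lower semicontinuity of the Lipschitz constant under uniform convergence.
\item It converts back via convexity and takes the supremum over $\ell$.
\end{itemize}

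You never pass through Lipschitz representatives. You express $\|\nabla u\|_{L^\infty(\Omega)}$ as the supremum of $-\int_\Omega u\cdot\mathrm{div}\,\Phi$ over $\int_\Omega\|\Phi\|_*\leq 1$. Each of these functionals is continuous under $L^1$ convergence, so this is the weak-$\ast$ lower semicontinuity of the $L^\infty$ norm made explicit.

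What your route buys:
\begin{itemize}
\item It needs only distributional convergence of $u_n$, no covering by convex sets, and no compactness in $C^0$.
\item It shows transparently why the constant is exactly preserved: the operator norm is dual to the trace norm under the Frobenius pairing.
\item Any other pointwise matrix norm would work with its own dual. The paper's route, by contrast, is tied to the operator norm, since that is the norm for which $\mathrm{Lip}(\cdot,Q)=|\cdot|_{W^{1,\infty}(Q)}$ on convex $Q$.
\end{itemize}

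What your route costs:
\begin{itemize}
\item You must justify the sharp identification $L^1(\Omega;\|\cdot\|_*)^*=L^\infty(\Omega;\|\cdot\|)$. Use scalar duality componentwise, then test with $\Phi=\chi_E B_j$ for a countable dense subset $\{B_j\}$ of the unit ball of $\|\cdot\|_*$ to get $\|G\|\leq\ell$ a.e.
\item Extension by density suffices; you do not need Hahn--Banach.
\item You need one extra line to get $u\in L^\infty_{loc}$, since the statement asks for $u\in W^{1,\infty}_{loc}$ and not just $\nabla u\in L^\infty$. An $L^1$ function with bounded distributional gradient is locally Lipschitz on balls, which gives this.
\end{itemize}

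In exchange, the paper's route yields locally uniform convergence and a Lipschitz representative of $u$ for free.

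Your fallback mollification argument is essentially the paper's scheme. The explicit locally uniform convergence $u_n*\rho_\epsilon\to u*\rho_\epsilon$ replaces Ascoli--Arzel\`a, and that argument is also correct.
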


\begin{proof}
    Let $\Omega =\bigcup _{\ell =1}^\infty Q_\ell $ where each $Q_\ell$, $\ell\in\N$, is a cube.
    Then, by Lemma~\ref{propLip2}, for any $\ell\in\N$,
    \begin{multline*} |u|_{W^{1,\infty}(Q_{\ell})}=
    \mathrm{Lip}(u,Q_{\ell}) \\\leq \liminf_{n\to +\infty}\mathrm{Lip}(u_n,Q_{\ell}) = \liminf_{n\to +\infty}|u_n|_{W^{1,\infty}(Q_{\ell})}
     \leq \liminf_{n\to +\infty}|u_n|_{W^{1,\infty}(\Omega)}
\end{multline*}
and the proof is concluded.
\end{proof}

\subsection{Discretizations of a domain and finite element spaces}\label{finiteelsec}

A bounded open set $\Omega\subset\R^N$ is \emph{polyhedral} if
$\Omega$ coincides with the interior of $\overline{\Omega}$ and
its boundary is the finite union of cells, any \emph{cell} being the closure of an open connected subset of an $(N-1)$-dimensional hyperplane.

We consider standard conforming piecewise linear finite elements, see for instance \cite[Chapter~2]{C78}.
\begin{defi} Let $\Omega\subset\R^N$ be a polyhedral open set. 
A finite set $\mathcal{T}$ of subsets of $\overline{\Omega}$ is a \emph{triangulation} of $\overline{\Omega}$
if
\begin{itemize}
\item $\overline{\Omega}=\displaystyle{\bigcup_{K\in \mathcal{T}}K}$;
\item each $K\in \mathcal{T}$ is a closed
$N$-simplex with nonempty interior;
\item the intersection of two different elements of $\mathcal{T}$ is either empty or consists of a common face.
 \end{itemize}
\end{defi}

For any triangulation $\mathcal{T}$ of $\overline{\Omega}$, let the finite element space $Y^{\mathcal{T}}$ be given by
$$Y^{\mathcal{T}}=\{v\in C(\overline{\Omega}):\ v|_K\in P_1(K)\text{ for any }K\in\mathcal{T}\}$$
where $P_1(K)$ is the space of polynomials of order at most $1$ restricted to $K$.
We have that $Y^{\mathcal{T}}\subset C(\overline{\Omega})\cap H^1(\Omega)$ and that $Y^{\mathcal{T}}_{0}=\{v\in Y^{\mathcal{T}}:\ v|_{\partial\Omega}=0\}\subset  H^1_0(\Omega)$, see \cite[Theorem~2.2.3]{C78}.
We say that an $\R^N$-valued function $F$ belongs to $Y^{\mathcal{T}}_{\R^N}$ if all its components $F_i$, $i=1,\ldots,N$, belong to
$Y^{\mathcal{T}}$. Analogously, an $\mathcal{M}^{N\times N}(\R)$-valued function $A$ belongs to $Y^{\mathcal{T}}_{\mathcal{M}^{N\times N}(\R)}$ if all its entries $a_{i,j}$, $i,\ j=1,\ldots,N$, belong to
$Y^{\mathcal{T}}$.

For any $K\in\mathcal{T}$ we call
\begin{equation}\label{hKrhoKdefin}
h_K=\mathrm{diam}(K)\quad\text{and}\quad\rho_K=\sup
\{\mathrm{diam}(B):\ B\text{ is a ball contained in }K\}.
\end{equation}
We say that  $\mathcal{T}$ is a
\emph{regular triangulation} of $\overline{\Omega}$ with positive constants $s$ and $h$ if
\begin{equation}\label{regularfinite}
h_K\leq h\text{ and }h_K\leq s\rho_K\quad\text{for any }K\in\mathcal{T}.
\end{equation}

We call $\Pi_{\mathcal{T}}$ the associated interpolation operator
defined on $C(\overline{\Omega})$. When applied to vector-valued, or matrix-valued respectively, functions, it is assumed to be applied component by component, or entry by entry respectively.
We note that if $u\in C^0(\overline{\Omega};[\delta_0,\delta_1])$, then also $\Pi_{\mathcal{T}}(u)\in C^0(\overline{\Omega},[\delta_0,\delta_1])$.
Analogously,
if $A\in C^0(\overline{\Omega};\mathcal{M}(\lambda_0,\lambda_1))$, then $\Pi_{\mathcal{T}}(A)\in C^0(\overline{\Omega},\mathcal{M}(\lambda_0,\lambda_1))$ as well.

\begin{rem}\label{interLip}
Unfortunately the interpolation operator $\Pi_{\mathcal{T}}$, when applied to Lipschitz functions on $\Omega$, does not preserve the Lipschitz constant, that is,
$\mathrm{Lip}(\Pi_{\mathcal{T}}(u))$ might be strictly higher than $\mathrm{Lip}(u)$. A simple example shows this property. Let $K$ be the triangle in $\R^2$ with vertices
$(0,0)$, $(1,0)$, $(1/2,1/2)$. Then $u(x,y)=x\cos(y)$ is Lipschitz on $K$ with $\mathrm{Lip}(u,K)=1$. We have that $\Pi_{K}(u)(x,y)=x+(\cos(1/2)-1)y$ whose Lipschitz constant is
strictly greater than $1$.
\end{rem}

\begin{prop}\label{discretizable}
Let $\Omega\subset\R^N$ be a polyhedral domain or a polyhedral open set belonging to $\mathcal{A}(r,L,R)$. Then there exists a positive constant $s$, depending on $\Omega$, such that for any $h\in (0,1]$ there exists
a regular triangulation $\mathcal{T}_h$ of $\overline{\Omega}$ with constants $s$ and $h$. We call $Y^h=Y^{\mathcal{T}_h}$
and $\Pi_h=\Pi_{\mathcal{T}_h}$, for any $h\in (0,1]$.
\end{prop}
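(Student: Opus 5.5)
The plan is to build the triangulations in two stages. First fix one coarse triangulation $\mathcal{T}_0$ of $\overline{\Omega}$. Then refine it uniformly, which creates new simplices of only finitely many shapes up to similarity, so shape regularity is preserved.

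\emph{Coarse triangulation.} Since $\Omega$ is a bounded polyhedral open set, $\overline{\Omega}$ is a finite union of compact convex polytopes. One way to see this is to take all the hyperplanes spanned by the cells of $\partial\Omega$. They cut $\R^N$ into finitely many convex polyhedral pieces, and the bounded pieces lying in $\overline{\Omega}$ cover it, because $\Omega$ is the interior of $\overline{\Omega}$. These pieces meet face to face by construction. Each convex polytope can be triangulated by a pulling (barycentric) subdivision: cone each face, in increasing order of dimension, from its barycenter. Using barycenters on shared faces makes the subdivisions of adjacent polytopes agree on common faces. This gives a conforming triangulation $\mathcal{T}_0$ with finitely many $N$-simplices, each with nonempty interior. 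The hypothesis $\Omega\in\mathcal{A}(r,L,R)$, or $\Omega$ being a domain, is only used to make sure $\overline{\Omega}$ has no lower-dimensional pieces, so that every simplex is a genuine $N$-simplex.

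\emph{Refinement.} Apply the standard regular refinement of simplices, namely Freudenthal--Kuhn subdivision, also called red refinement (Bey, Kuhn). It splits each $N$-simplex $K$ into $2^N$ simplices of diameter at most $h_K/2$. These belong to a finite number of similarity classes depending only on $K$, and the refinement is conforming across faces when the vertices are consistently ordered. A consistent ordering can be obtained by labelling the vertices of $\mathcal{T}_0$ globally and using that order in every simplex. After $m$ steps we get $\mathcal{T}^{(m)}$ with $h_K\le 2^{-m}\max_{K_0\in\mathcal{T}_0}h_{K_0}$. The ratio $h_K/\rho_K$ takes only finitely many values, since the shapes repeat under similarity. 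So
$$s=\max_{m}\max_{K\in\mathcal{T}^{(m)}} h_K/\rho_K<+\infty$$
depends only on $\mathcal{T}_0$, and hence only on $\Omega$. Given $h\in(0,1]$, choose the smallest $m$ with $2^{-m}\max h_{K_0}\le h$ and set $\mathcal{T}_h=\mathcal{T}^{(m)}$.

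The step I expect to be the main obstacle is the uniform shape bound under refinement, together with its conformity. Ad hoc bisections can degenerate, so I would not try to prove this directly. Instead I would invoke the known result that Kuhn/Freudenthal red refinement produces at most $N!\,2^{N-1}$ congruence classes. The initial conforming triangulation of a general polyhedral set is classical, so I would cite it, for instance \cite[Chapter~2]{C78}, rather than detail the barycentric construction.
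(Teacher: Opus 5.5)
Your route is essentially the paper's: fix one coarse conforming triangulation, then refine it by a Freudenthal--Kuhn type subdivision whose pieces fall into finitely many similarity classes. The paper first splits $\Omega$ into finitely many polyhedral domains with pairwise disjoint closures; this is its only use of $\Omega\in\mathcal{A}(r,L,R)$. It then relies on the edgewise subdivision of \cite{Edel1,Edel2}, as detailed in \cite{FeR}, which cuts a simplex into $k^N$ pieces for every integer $k$; your iterated red refinement is essentially the dyadic case $k=2^m$. Your hyperplane-arrangement construction of $\mathcal{T}_0$ makes the reduction to components unnecessary, because every piece is the closure of an open region and is therefore full dimensional.

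One claim is false, though it is easy to repair: for $N\geq 3$, red refinement does not halve diameters. In a regular tetrahedron with unit edges, the four children filling the central octahedron all contain the chosen diagonal. That diagonal joins the midpoints of two opposite edges and has length $1/\sqrt{2}>1/2$. So $h_K\leq 2^{-m}\max_{K_0\in\mathcal{T}_0}h_{K_0}$ can fail, and your choice of $m$ need not give $h_K\leq h$.

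The fix uses only what you already have. Every level-$m$ descendant $K$ of $K_0$ has $\mathcal{L}(K)=2^{-Nm}\mathcal{L}(K_0)$, since the $2^N$ children of a simplex have equal volume. Moreover, $h_K\leq s\rho_K$ gives $\mathcal{L}(K)\geq \omega_N\bigl(h_K/(2s)\bigr)^N$, where $\omega_N$ is the volume of the unit ball. Hence $h_K\leq C_0\,2^{-m}$, with $C_0$ depending only on $s$ and $\mathcal{T}_0$. It then suffices to choose $m$ with $C_0 2^{-m}\leq h$.
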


\begin{proof} We note that if $\Omega$ is a polyhedral open set belonging to $\mathcal{A}(r,L,R)$, then it is the union of a finite number of polyhedral domains whose closures are pairwise disjoint. Hence it is enough to consider the case of a polyhedral domain, where the result
essentially follows from the arguments of \cite{Edel1,Edel2}, see the proof of Remark~2.6 in \cite{FeR} for more details.
\end{proof}

We note that it is not easy to have a precise estimate of $s$ with respect to $\Omega$, for example it would be quite interesting to establish that if $\Omega\in \mathcal{A}(r,L,R)$, then $s$ depends on $r$, $L$ and $R$ only.

We conclude this preliminary part on discretization and finite elements with the following estimates on interpolations. The first part of the next theorem follows immediately from  \cite[Theorem~3.1.6]{C78}, for the second we provide a complete proof. 

\begin{theorem}\label{Ciarletestimteo}
Let $\mathcal{T}$ be a
regular triangulation of $\overline{\Omega}$ with positive constants $s$ and $h$.

Let us consider a real number $q$ such that $q>N/2$.
Then there exists a constant $\tilde{C}(q)$, depending on $q$ only, such that
for any $u\in W^{2,q}(\Omega,\R^{M_1})$ we have
\begin{multline}\label{Ciarletest}
\|u-\Pi_{\mathcal{T}}(u)\|_{L^{q}(\Omega)}\leq \tilde{C}(q)h^2\|D^2u\|_{L^q(\Omega)}\quad\text{and}\\
\|\nabla(u-\Pi_{\mathcal{T}}(u))\|_{L^{q}(\Omega,\R^N)}\leq \tilde{C}(q)sh\|D^2u\|_{L^q(\Omega)}.
\end{multline}

There exists a constant $C$, depending on $s$ only, such that for any $u\in C^2(\overline{\Omega},\R^{M_1})$ we have
\begin{equation}\label{Lipestimateinter}
|\Pi_{\mathcal{T}}(u)|_{W^{1,\infty}(\Omega)}\leq |u|_{W^{1,\infty}(\Omega)}+Ch\|D^2u\|_{L^{\infty}(\Omega)}.
\end{equation}

The same estimates, \eqref{Ciarletest} and \eqref{Lipestimateinter}, clearly hold for matrix-valued functions.
\end{theorem}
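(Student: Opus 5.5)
The first estimate \eqref{Ciarletest} I will take directly from \cite[Theorem~3.1.6]{C78}. The condition $q>N/2$ gives $W^{2,q}\subset C^0$, so $\Pi_{\mathcal{T}}$ is well defined. I apply the local estimates $\|u-\Pi_K u\|_{L^q(K)}\leq C h_K^2|u|_{W^{2,q}(K)}$ and $\|\nabla(u-\Pi_K u)\|_{L^q(K)}\leq C (h_K^2/\rho_K)|u|_{W^{2,q}(K)}$ on each simplex $K$, use $h_K\leq h$ and $h_K/\rho_K\leq s$, and raise to the $q$-th power and sum over $K$. The vector- and matrix-valued cases follow componentwise, with the constants adjusted through the norm conventions fixed earlier.

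For \eqref{Lipestimateinter} I argue on a single simplex $K$. The function $\Pi_{\mathcal{T}}(u)$ is continuous and piecewise affine, so its distributional gradient is the piecewise constant gradient. Hence $|\Pi_{\mathcal{T}}(u)|_{W^{1,\infty}(\Omega)}=\max_K\|\nabla \Pi_K u\|$, and it suffices to show, for every $K$,
\begin{equation*}
\|\nabla\Pi_K u\|\leq \|\nabla u\|_{L^\infty(K)}+C(s)h_K\|D^2u\|_{L^\infty(K)}.
\end{equation*}
Let $A=\nabla \Pi_K u$, a constant matrix, and let $x_0,\dots,x_N$ be the vertices of $K$. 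The matrix $A$ is determined by $A(x_j-x_0)=u(x_j)-u(x_0)$ for $j=1,\ldots,N$. Fix a point $\bar x\in K$ and set $B=\nabla u(\bar x)$. By Taylor's formula along segments inside the convex set $K$,
\begin{equation*}
|u(x_j)-u(x_0)-B(x_j-x_0)|\leq C h_K^2\|D^2u\|_{L^\infty(K)}.
\end{equation*}
So $(A-B)(x_j-x_0)$ is of size $h_K^2\|D^2u\|_\infty$ for every $j$. Inverting the edge matrix $E=[x_1-x_0,\dots,x_N-x_0]$ then gives $\|A-B\|\leq \|E^{-1}\|\sqrt{N}\,C h_K^2\|D^2u\|_\infty$.

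The key technical step, presumably the content of Lemma~\ref{simplesso}, is the bound $\|E^{-1}\|\leq C(s)/h_K$, equivalently $|Ev|\geq c(s)h_K|v|$. I would prove it by scaling: rescale $K$ so that $h_K=1$, so that it contains a ball of diameter $\geq 1/s$. If $|Ev|$ were small with $|v|=1$, the edge vectors would almost lie in a hyperplane. Then $K$ would lie in a thin slab, of width controlled by $|Ev|$, around an affine hyperplane, contradicting the inscribed ball. Concretely, I would use $|\det E|=N!\,|K|\geq c\rho_K^N$ together with $|x_j-x_0|\leq h_K$ and bound $\|E^{-1}\|\leq \|E\|^{N-1}/|\det E|$ via the adjugate.

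Combining the two steps, $\|A\|\leq\|B\|+C(s)h_K\|D^2u\|_{L^\infty(K)}\leq |u|_{W^{1,\infty}(\Omega)}+C(s)h\|D^2u\|_{L^\infty(\Omega)}$. Taking the maximum over $K$ proves \eqref{Lipestimateinter}. One point needs care: the statement uses the operator norm $\|\cdot\|$ for $|u|_{W^{1,\infty}}$. The perturbation $\|A-B\|$ is estimated in the same operator norm, and the Frobenius-to-operator conversion only affects the constant, so it does not touch the leading coefficient $1$. That coefficient is exactly what is needed, since the Lipschitz constant must not be multiplied by any factor.
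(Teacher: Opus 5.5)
Your proof is correct and follows the paper's route. For \eqref{Ciarletest} you read the estimate off \cite[Theorem~3.1.6]{C78} using $h_K\le h$ and $h_K/\rho_K\le s$. For \eqref{Lipestimateinter} you argue element by element: you compare the constant gradient of $\Pi_{\mathcal{T}}(u)$ on $K$ with $\nabla u$ at a point of $K$ via Taylor's formula, then invert the edge matrix. The paper expands at the vertex $x_0=0$; your arbitrary $\bar x\in K$ makes no difference.

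The one genuine difference is how you obtain $\|A_K^{-1}\|\le C(s)h_K^{-1}$, that is, Lemma~\ref{simplesso}. The paper uses a soft compactness argument. The family of simplices with $h_K=1$, $h_K\le s\rho_K$ and a vertex at $0$ is compact under convergence of the vertices, since limits stay in the family by Hausdorff convergence. Hence $\det A_K$ is bounded away from $0$ and $\sup\|A_K^{-1}\|<+\infty$. This yields the existence of $C(s)$ but no formula for it.

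Your route instead uses $\|E^{-1}\|\le\|E\|^{N-1}/|\det E|$, which is immediate from the singular values. Combined with $\|E\|\le\sqrt{N}\,h_K$ and $|\det E|=N!\,|K|\ge N!\,\omega_N(\rho_K/2)^N$, where $\omega_N$ is the volume of the unit ball, it gives directly $\|E^{-1}\|\le C_N s^N h_K^{-1}$. This is shorter, avoids the compactness step, and makes the dependence of the constant on $s$ explicit. The slab heuristic you mention first is unnecessary once you use this determinant bound.
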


In order to prove \eqref{Lipestimateinter}, we need a  preliminary result. 

\begin{lem}\label{simplesso}
Given $s>0$, there exists a constant $C=C(s)$, depending on $s$ only, such that for any
closed
$N$-simplex with nonempty interior $K$ satisfying $h_K\leq s\rho_K$, $h_K$ and $\rho_K$ as in \eqref{hKrhoKdefin}, the following holds.

If we call $x_i$, for $i=0,\ldots,N$, the vertices of $K$ and $A_K$ the $N\times N$ matrix whose columns are $(x_1-x_0),\ldots,(x_N -x_0)$, then $A_K$ is invertible and 
\begin{equation}
\|A^{-1}(K)\| \leq C (s)h_K^{-1}.
\end{equation}
\end{lem}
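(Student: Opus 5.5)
The plan is the standard affine-reference argument (as in \cite[Theorem~3.1.3]{C78}), made fully explicit. Let $\hat K$ be the reference simplex with vertices $0,e_1,\ldots,e_N$. Then $F(\hat x)=x_0+A_K\hat x$ is an affine bijection of $\hat K$ onto $K$; in particular $A_K$ is invertible, since $K$ has nonempty interior and so the vectors $x_i-x_0$ are linearly independent. The bound on $\|A_K^{-1}\|$ will come from comparing the inradius of $K$ with the diameter of $\hat K$.

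The key step is the following. Let $\xi\in\R^N$ with $|\xi|=\rho_K$. Fix any $\rho'<\rho_K$. Then $K$ contains a ball $B$ of diameter $\rho'$, with centre $c$. The points $y=c+\xi\rho'/(2\rho_K)$ and $z=c-\xi\rho'/(2\rho_K)$ both lie in $\overline{B}\subset K$, and $y-z=(\rho'/\rho_K)\xi$.

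Set $\hat y=F^{-1}(y)$ and $\hat z=F^{-1}(z)$, which belong to $\hat K$. Then
$$A_K^{-1}\big((\rho'/\rho_K)\xi\big)=\hat y-\hat z,$$
so $|A_K^{-1}\xi|\le (\rho_K/\rho')\,\mathrm{diam}(\hat K)=(\rho_K/\rho')\sqrt{2}$. Letting $\rho'\to\rho_K$ and taking the supremum over all such $\xi$ gives
$$\|A_K^{-1}\|\le \sqrt{2}/\rho_K.$$
Combining this with the assumption $h_K\le s\rho_K$, that is $\rho_K^{-1}\le s h_K^{-1}$, we obtain
$$\|A_K^{-1}\|\le \sqrt2\, s\, h_K^{-1},$$
so $C(s)=\sqrt2\,s$ works. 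Since constants depending on $N$ are dropped in the paper, this constant depends on $s$ only.

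There is no real obstacle here. The only points needing care are, first, that the supremum defining $\rho_K$ need not be attained, which is handled by working with $\rho'<\rho_K$ and passing to the limit. Second, we use that $F^{-1}$ maps $K$ into $\hat K$, whose diameter is $\sqrt2$, realised by $|e_i-e_j|$ for $i\neq j$. One could alternatively argue via volumes and $|\det A_K|$, but the direct argument above is cleaner.
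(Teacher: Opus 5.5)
Your proof is correct, but it takes a different route from the paper's. The paper argues by compactness. It rescales to $h_K=1$ and translates $x_0$ to $0$, then shows that the class $\kappa$ of such simplexes with $h_K\le s\rho_K$ is compact under convergence of the vertices; the inradius condition survives Hausdorff limits, so limit simplexes stay nondegenerate. It deduces that $|\det(A_K)|$ is bounded away from zero on $\kappa$, hence that $K\mapsto A_K^{-1}$ is continuous there, and concludes that $\sup_{\kappa}\|A_K^{-1}\|<+\infty$. You instead prove directly the classical affine-equivalence estimate $\|A_K^{-1}\|\le \mathrm{diam}(\hat K)/\rho_K$ of \cite[Theorem~3.1.3]{C78}, by pulling a chord of an inscribed ball back to the reference simplex. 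Your treatment of a possibly non-attained supremum, via $\rho'<\rho_K$ and the closedness of $K$, is sound. Your approach gives an explicit constant, $C(s)=\sqrt2\,s$, which does not even depend on $N$. It is also shorter, with no need to check closedness of $\kappa$ or continuity of matrix inversion. The paper's soft compactness argument gives no quantitative information on $C(s)$, but the same argument would bound any quantity that depends continuously on a nondegenerate normalized simplex.
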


\begin{proof}
By a simple rescaling argument, it is enough to show that $\|A^{-1}(K)\| \leq C (s)$ when $h_K=1$.
Let
$$\kappa=\{K\text{ is a simplex with }h_K=1,\text{ satisfying }h_K\leq s\rho_K \text{ and with }x_0=0\}.$$

First of all, we note that $\kappa$ is compact with respect to the following kind of convergence. For any sequence of simplexes $\{K_n\}_{n\in \N}$, with vertices $0$ and $x_j^n$, $j=1,\ldots,N$, for any $n\in \N$, and simplex $K$ with vertices $0$ and $x_j$, $j=1,\ldots,N$, we say that $K_n$ converges to $K$ as $n\to +\infty$ if $x_j^n\to x_j$ as $n\to +\infty$ for any $j=1,\ldots,N$.
It follows that $A_{K_n}\to A_K$ and it is not difficult to see that $K_n$ converges to $K$ in the Hausdorff distance, as $n\to +\infty$.
By convergence in the Hausdorff distance, we immediately infer that if $\{K_n\}_{n\in \N}\subset \kappa$ and $K_n\to K$, then $K\in \kappa$, thus $K$ has nonempty interior and $A_K$ is invertible, that is,
$\mathrm{det}(A_K)>0$ and $\|A^{-1}_K\|<+\infty$.
We conclude that $\kappa$ is indeed compact with respect to this kind of convergence. We immediately infer that there exist positive constants $c_1$ and $c_2$ such that
$c_1\leq \mathrm{det}(A_K)\leq c_2$ for any $K\in \kappa$, therefore if a sequence $\{K_n\}_{n\in \N}\subset \kappa$ converges to $K$ as $n\to +\infty$, we also have
$A_{K_n}^{-1}\to A_K^{-1}$. Again by compactness, it follows that
$$C(s):=\sup\{\|A^{-1}(K)\|:\ K\in \kappa\}<+\infty$$
and the proof is concluded.
\end{proof}

We now conclude the proof of the second part of Theorem~\ref{Ciarletestimteo}.

\begin{proof}[Proof of Theorem~\ref{Ciarletestimteo}]
It is clearly enough to restrict ourselves to a single $K\in \mathcal{T}$. Without loss of generality, if $h_K$ is the diameter of $K$, we can assume that $K=h_K K_0$ with $K_0\in\kappa$, $\kappa$ as in the proof of Lemma~\ref{simplesso}. We also assume, again without loss of generality, that $u(0)=0$. We let $0$ and $x_1,\ldots,x_N$ be the vertices of $K_0$. 
Then, on $K$, $\Pi_{\mathcal{T}}(u)(x)=\tilde{B}x$ for some matrix $\tilde{B}\in\mathcal{M}^{M_1\times N}$ and any $x\in K$, and $|\Pi_{\mathcal{T}}(u)|_{W^{1,\infty}(K)}=\|\tilde{B}\|$.
We have that $\tilde{B}$ solves the following equation
    \begin{equation*}
h_K\tilde{B}A_{K_0}  = 
\left[ u (h_Kx_1) 
\cdots 
u (h_Kx_N) \right]
\end{equation*}
where $u (h_Kx_j)$, $j=1,\ldots,N$,  is an $M_1$-column vector.
For any $j=1,\ldots,N$ and an absolute constant $C_0$,
$$|u(h_Kx_j)- h_K \nabla u(0)[x_j]| \leq C_0 h_K^2\|D^2u\|_{L^{\infty}(K)}.$$
Consequently, for another absolute constant $\tilde{C}_0$,
$$\|h_K(\tilde{B}-\nabla u(0))A_{K_0}\|\leq \tilde{C}_0 h_K^2\|D^2u\|_{L^{\infty}(K)}$$
thus
  \begin{equation*}
        \|\tilde{B}-\nabla u(0)\|\leq \|A^{-1}(K_0)\|\tilde{C}_0h_K \|D^2u\|_{L^{\infty}(K)}\leq
        \tilde{C}_0C(s)h_K  \|D^2u\|_{L^{\infty}(K)},
    \end{equation*}
    where $C(s)$ is the constant of Lemma~\ref{simplesso}. The proof is concluded.
\end{proof}

\subsection{$\Gamma$-convergence}
We recall the definition of $\Gamma$-convergence, see \cite{DM93} and \cite{B02} for a more thorough introduction. Let $(X, d)$ be a metric space. A sequence $F_n : X \to [-\infty , +\infty ]$, $n \in \N$, $\Gamma$-converges as $n\to +\infty$ to a function $F_n : X \to [-\infty , +\infty ]$ if for every $x\in X$ we have 
\begin{enumerate}
    \item\label{Gammaliminf}
    for every sequence $\{x_n\}_{n\in \N}$ converging to $x$ we have
    \begin{equation*}
        F(x) \leq \liminf_n F_n(x_n);
    \end{equation*}
    \item\label{Gammarecovery}
     there exists a sequence $\{x_n\}_{n\in \N}$ converging to $x$ such that
    \begin{equation*}
        F(x) =\lim _n F_n(x_n).
    \end{equation*}
\end{enumerate}
The function F is the $\Gamma$-limit of the sequence $\{F_n\}_{n\in \N}$ as $n\to +\infty$ and is denoted by $F = \Gamma $-$\displaystyle\lim_n F_n$.
Property (\ref{Gammaliminf}) above is called $\Gamma$-$\liminf$ inequality and property (\ref{Gammarecovery}) is called the existence of a recovery sequence.

We say that the sequence $\{F_n\}_{n\in \N}$ is \emph{equicoercive} if there exists a compact set $K\subset X$ such that $\displaystyle \inf_K F_n = \inf_X F_n$ for any $n\in \N$.

The following theorem, known as the Fundamental Theorem of $\Gamma$-convergence, illustrates the motivations for the definition of this variational convergence. For its proof we refer, for instance, to \cite[Theorem 1.21]{B02}.
\begin{theorem}\label{fundGammathm}
     Let $(X, d)$ be a metric space and let $F_n : X \to [-\infty , +\infty ]$, $n \in \N$, be a sequence of functions defined on $X$. If 
     the sequence $\{F_n\}_{n\in \N}$ is equicoercive
     and $\displaystyle F = \Gamma $-$\lim_n F_n$, then $F$ admits a minimum over $X$ and we have
\begin{equation*}
    \min_X F = \lim _n \inf _X F_n.
\end{equation*}
Furthermore, if $\{x_n\}_{n\in \N}$ is a sequence converging to $x\in X$ such that $\displaystyle \lim_n F_n(x_n) = \lim_n \inf_X F_n$, then $x$ is a minimum point for $F$.
\end{theorem}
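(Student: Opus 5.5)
The plan is to prove the two inequalities $\limsup_n \inf_X F_n\leq \inf_X F$ and $\liminf_n \inf_X F_n\geq F(y)$ for a suitable point $y\in K$. Together these give that $y$ is a minimum point of $F$ and that $\min_X F=\lim_n\inf_X F_n$. The last statement will then follow directly from the $\Gamma$-$\liminf$ inequality. Throughout, $K$ is the compact set given by equicoercivity, so $\inf_K F_n=\inf_X F_n$ for every $n$. Since $X$ is a metric space, $K$ is sequentially compact.

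\emph{Upper bound.} Fix any $x\in X$ and let $\{x_n\}_{n\in\N}$ be a recovery sequence for $x$, as in property (\ref{Gammarecovery}). Then $\inf_X F_n\leq F_n(x_n)$ for every $n$. Passing to the $\limsup$ gives $\limsup_n\inf_X F_n\leq \lim_n F_n(x_n)=F(x)$. Taking the infimum over $x$ yields $\limsup_n\inf_X F_n\leq\inf_X F$. This holds in $[-\infty,+\infty]$ without any extra care.

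\emph{Lower bound and existence of a minimizer.} Choose a subsequence $\{n_j\}_{j\in\N}$ along which $\inf_X F_{n_j}\to\liminf_n\inf_X F_n$. For each $j$, pick $y_j\in K$ with
\[
F_{n_j}(y_j)\leq \max\{\inf_X F_{n_j}+1/j,\,-j\}.
\]
The $\max$ handles the case $\inf_X F_{n_j}=-\infty$, and such a $y_j$ exists because $\inf_K F_{n_j}=\inf_X F_{n_j}$. If $\inf_X F_{n_j}=+\infty$, any point of $K$ will do. By compactness, up to a further subsequence, $y_j\to y\in K$. The $\Gamma$-$\liminf$ inequality is stated for full sequences, so I will transfer it to the subsequence explicitly. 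I define a full sequence by $x_m=y_j$ for $n_j\leq m<n_{j+1}$ (and $x_m=y_0$ for $m<n_0$). Then $x_m\to y$, and the $\liminf$ over all $m$ is at most the $\liminf$ along the indices $m=n_j$. Hence
\[
F(y)\leq\liminf_m F_m(x_m)\leq\liminf_j F_{n_j}(y_j)\leq \liminf_n\inf_X F_n .
\]
Combining this with the upper bound gives
\[
F(y)\leq\liminf_n\inf_X F_n\leq\limsup_n\inf_X F_n\leq\inf_X F\leq F(y).
\]
So every inequality is an equality, $y$ is a minimum point of $F$, and $\min_X F=\lim_n\inf_X F_n$.

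\emph{Final statement.} If $x_n\to x$ and $\lim_n F_n(x_n)=\lim_n\inf_X F_n$, the $\Gamma$-$\liminf$ inequality gives $F(x)\leq\liminf_n F_n(x_n)=\lim_n\inf_X F_n=\min_X F$. Hence $x$ is a minimum point.

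The only delicate point is the one handled above: applying the $\Gamma$-$\liminf$ inequality, which is formulated for full sequences, along a subsequence, together with the bookkeeping for infinite values. Everything else is routine.
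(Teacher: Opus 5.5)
Your proof is correct and is essentially the standard argument of \cite[Theorem~1.21]{B02}; the paper cites this reference instead of giving its own proof. The argument picks almost-minimizers in the compact set $K$, extracts a convergent subsequence, applies the $\Gamma$-$\liminf$ inequality along subsequences (transferred exactly as you do) for the lower bound, and uses recovery sequences for the upper bound. One harmless nit: the paper's definition of equicoercivity does not exclude $K=\emptyset$, in which case every $F_n\equiv+\infty$ and ``any point of $K$ will do'' should read ``any point of $X$'' (for instance, replace $K$ by a singleton).
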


A family of functions $F_\varepsilon$, defined for every $\varepsilon>0$, $\Gamma$-converges to a function $F$ as $\varepsilon \to 0^+$ if for
every sequence $\{\varepsilon _n\}_{n\in \N}$ of positive numbers converging to $0$ as $n\to +\infty$, we have $F = \Gamma $-$\displaystyle \lim_n F_{\varepsilon_n}$.

\subsection{The Helmholtz equation and radiating solutions}
We consider the \emph{Helm\-holtz equation}, or \emph{reduced wave equation}, $\Delta u+k^2 u=0$ in an open subset of $\R^N$. Here $k>0$ is the \emph{wavenumber}. We review a few well-known facts about the Helmholtz equation, for details see for instance \cite{CK98}. The fundamental solution to the Helmholtz equation is given by
\begin{equation*}
    \Phi_k (x,y)=\frac i 4 \left( \frac k {2\pi |x-y|}\right)^{(N-2)/2} H^{(1)}_{(N-2)/2} (k|x-y|),\quad x,y \in \R^N,\ x\ne y.
\end{equation*}
Here $H^{(1)}_{\alpha}$, for any $\alpha\geq 0$, is the Hankel function of order $\alpha$.

Let $R_0>0$. A solution $u$ to the Helmholtz equation in $\R^N\setminus \overline{B_{R_0}}$ is \emph{radiating} or \emph{outgoing} if it satisfies the so-called \emph{Sommerfeld radiation condition}
$$\lim_{r\to +\infty } r^{(N-1)/2} \left(\frac{\partial u}{\partial r} - kiu\right) =0, \quad r=|x|$$
where the limit has to hold uniformly with respect to all directions $\hat{x}=x/|x|\in \SSS^{N-1}$.
A radiating solution $u$ has the following asymptotic behavior 
\begin{equation*}
    u(x)=\frac{e^{ik|x|}}{|x|^{(N-1)/2}} \left( u_\infty (\hat x )+O\left(\frac{1}{|x|}\right)\right), \quad |x|\to +\infty ,
\end{equation*} where $\hat x=x/|x|$ and the limit has to hold uniformly with respect to $\hat x\in\SSS^{N-1}$.
 The function $u_\infty:\SSS^{N-1}\to \C$ is called the \emph{far-field pattern} of the radiating solution $u$.
The far-field is given by
\begin{equation}\label{firstffrepr}
    u_\infty (\hat x)= \lim_{r\to +\infty } \frac{r^{(N-1)/2}}{e^{ikr}} u(r\hat x) \quad \text{for any }\hat{x}\in \SSS^{N-1}.
\end{equation}

We recall that a radiating solution $u$ satisfies the following Green representation for any $R>R_0$
\begin{equation}\label{Greenrep}
u(x)=\int_{\partial B_{R}}\left( u(y)\frac{\partial \Phi_k(x,y)}{\partial\nu(y)}-\frac{\partial u }{\partial\nu}(y)\Phi_k(x,y)\right)d\mathcal{H}^{N-1}(y),\quad x\in \R^N\setminus \overline{B_{R}}
\end{equation}
where $\nu$ is the exterior unit normal to $\partial B_{R}$.

By \eqref{Greenrep}, \eqref{firstffrepr}, and 
the following asymptotic behavior of the Hankel functions for $\alpha\geq 0$, see for example \cite[formula (3.59)]{CK98} or \cite[Chapter~5]{Leb},
\begin{equation*}
    \begin{array}{ll}
        H^{(1)}_{\alpha} (t) = \sqrt{\frac{2}{\pi t}} e^{ i(t-\frac{\alpha\pi}{2}-\frac{\pi}{4})} \left(1+O\left(\frac{1}{t} \right) \right), & t \to +\infty\\
        (H^{(1)}_{\alpha})' (t)  = \sqrt{\frac{2}{\pi t}} e^{ i(t-\frac{\alpha\pi}{2}+\frac{\pi}{4})} \left(1+O\left(\frac{1}{t} \right) \right), & t \to +\infty,
    \end{array}
\end{equation*}
we deduce that for any $R>R_0$
\begin{multline}\label{farfield}
    u_\infty (\hat x) \\ =\frac{e^{-i \frac{(N-3)\pi}{4}}}{4\pi}\left(\frac{k}{2\pi}\right)^{\frac{N-3}2}
    \int_{\partial B_{R}}  \left(u(y)  \frac{\partial e^{-ik \hat x \cdot y}}{\partial\nu(y)}   -\frac{\partial u }{\partial \nu }(y)e^{-ik \hat x \cdot y}\right) d\mathcal{H}^{N-1}(y).
\end{multline}
From \eqref{farfield} it is easy to infer that $u_\infty$ is an analytical function on $\mathbb{S}^{N-1}$.

\subsection{Scattering problems}

We consider the scattering problem due to the presence of an inhomogeneity in an otherwise homogeneous isotropic medium in the acoustic time-harmonic framework. Throughout the paper we keep fixed the following parameters $R_0>0$, $0<\lambda_0 < \lambda_1$, $0<\delta_0<\delta_1$ and $ 0<k_0<k_1$. We begin with a general definition of the coefficients describing the inhomogeneity.
We call $X=L^1(B_{R_0},\mathcal{M}^{N\times N}(\R))\times L^1(B_{R_0)}$ and we endow it with the following norm
$$\|(\sigma,\eta)\|_X=\|\sigma\|_{L^1(B_{R_0})}+\|\eta\|_{L^1(B_{R_0})}.$$

\begin{defi}\label{class}
We say that $(\sigma , \eta)\in X$ belongs to $\tilde{X}=(R_0,\lambda_0, \lambda_1 ,\delta_0 , \delta_1)$ if
the following holds. First, $\sigma \in L^\infty (\R^N, M^{N\times N}_{sym} (\R) )$ is a \emph{uniform elliptic tensor} in $\R^N$ with constants $\lambda_0$ and $\lambda_1$, that is, 
\begin{equation}\label{5.1FR}
    \lambda_0 |\xi |^2 \leq \sigma (x) \xi \cdot \xi \leq \lambda_1 |\xi|^2 \quad \text{for a.e. } x\in \R^N \text{ and for any }\xi \in \R^N.
\end{equation}
Second,
$\eta \in L^\infty (\R^N)$ satisfies
\begin{equation}\label{boundseta}
     \delta_0 \leq \eta (x) \leq \delta_1 \quad \text{for a.e. } x\in \R^N.
\end{equation}
We assume that the space is homogenous isotropic outside the ball $\overline{B_{R_0}}$, that is,
\begin{equation}\label{5.9FR}
    \sigma \equiv \mathbb{I}_N \quad\text{and} \quad \eta \equiv 1 \quad \text{in } \R^N \setminus \overline{B_{R_0}}.
\end{equation}
Finally, we say that
\begin{equation*}
    (\sigma , \eta , k) \in \tilde{\nn} =\tilde{\nn} (R_0,\lambda_0, \lambda_1 ,\delta_0 , \delta_1 , k_1)
\end{equation*}
if $(\sigma,\eta)\in \tilde{X}$ and
the number $k\in \R$ satisfies 
\begin{equation}\label{5.11FR}
0< k\leq k_1.
\end{equation}
\end{defi}

The scattering problem is the following. 
Let $u^i$ be an entire solution to the Helmholtz equation $\Delta u+k^2 u=0$, with $k>0$. We restrict ourselves to the case when $u^i$ is a \emph{plane wave}, namely
\begin{equation}\label{incidentfield}
u^i(x)=u^i(k,d;x)=e^{k\rmi d\cdot x}\quad\text{for any }x\in \R^N,
\end{equation}
where $d\in \SSS^{N-1}$ is the \emph{direction of propagation}. The plane wave $u^i$ is called the \emph{incident field} and we aim to solve the following scattering problem
\begin{equation}\label{pbScattering}
\left\{
\begin{array}{ll}
\mathrm{div} (\sigma \nabla u) +k^2 \eta u= 0 &\text{in } \R^N \\
u=u^i + u^s &\text{in } \R^N \\
\displaystyle \lim_{r\to +\infty } r^{\frac{N-1}{2}} \left(\frac{\partial u^s}{\partial r} - kiu^s\right) =0& r=|x|.
\end{array}
\right.
\end{equation} 
Here the function $u=u(\sigma,\eta;k,d;\cdot)$ is called the \emph{total field} and $u^s=u^s(\sigma,\eta;k,d;\cdot)$ is called the \emph{scattered field}. About the coefficients, we assume that $(\sigma,\eta,k)\in \tilde{\mathcal{N}}$, with $\tilde{\mathcal{N}}$ as in Definition~\ref{class}.
Hence both $u$ and $u^s$ satisfy the Helmholtz equation $\Delta u+k^2 u=0$ outside $\overline{B_{R_0}}$
and $u^s$ is radiating.

We call
$$u_{\infty}(\sigma,\eta;k,d;\cdot)=u_{\infty}:\mathbb{S}^{N-1}\to\C $$
the far-field pattern of the scattered field $u^s=u^s(\sigma,\eta;k,d;\cdot)$.
We note that the following \emph{reciprocity relation} holds
\begin{equation}\label{recrelation}
u_{\infty}(\sigma,\eta;k,d;\hat x)=u_{\infty}(\sigma,\eta;k,-\hat x;-d)\quad\text{for any }d,\, \hat{x}\in\mathbb{S}^{N-1}.
\end{equation}

The following definition is needed to establish existence and uniqueness of a solution to \eqref{pbScattering}.

\begin{defi}
For any domain $D$ in $\R^N$, let $\sigma$ be a uniformly elliptic tensor in $D$ and $\eta \in L^\infty (D)$ satisfy \eqref{boundseta} in $D$.
The Helmholtz type equation $\mathrm{div}(\sigma \nabla u )+k^2\eta u=0$, with $k\geq 0$,
satisfies the \emph{unique continuation property} (UCP) in $D$ if for any solution $u$ that vanishes in an open not empty subset $D_1 \subset D$, it follows that $u\equiv 0$ in all of $D$.    
\end{defi}

About sufficient condition for the UCP to hold for elliptic equations we refer, for instance, to \cite{Aleetal}.

By the classical Phillips method one can deduce the following existence and uniqueness result.
\begin{prop}
Let us assume that $\sigma$ and $\eta$ are such that $\mathrm{div}(\sigma \nabla u )+k^2\eta u=0$
satisfies the unique continuation property in $\R^N$. Then the scattering problem \eqref{pbScattering} admits a unique solution.
\end{prop}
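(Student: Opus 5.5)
Writing $u=u^i+u^s$, the problem becomes one for the scattered field $u^s$ with a source supported in $\overline{B_{R_0}}$. By \eqref{5.9FR} we have $\sigma-\mathbb{I}_N=0$ and $\eta-1=0$ outside $\overline{B_{R_0}}$, so \eqref{pbScattering} is equivalent to
$$\mathrm{div}(\sigma\nabla u^s)+k^2\eta u^s=-\mathrm{div}((\sigma-\mathbb{I}_N)\nabla u^i)-k^2(\eta-1)u^i=:f\quad\text{in }\R^N,$$
with $u^s$ radiating. Here $f\in H^{-1}$ has compact support in $\overline{B_{R_0}}$. I would follow the classical variational (Phillips/Kirsch) approach and treat uniqueness and existence separately.

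\emph{Uniqueness.} Let $w$ be the difference of two solutions. Then $w$ solves the homogeneous equation in $\R^N$ and is radiating. Take $R>R_0$. Multiply by $\overline{w}$, integrate over $B_R$ and take imaginary parts; since $\sigma$ is real symmetric and $\eta$ is real, this gives
$$\mathrm{Im}\int_{\partial B_R}\frac{\partial w}{\partial\nu}\overline{w}\,d\mathcal{H}^{N-1}=0.$$
Rellich's lemma, together with the Sommerfeld condition, then shows that $w=0$ in $\R^N\setminus\overline{B_R}$. Since $w$ vanishes on a nonempty open set, the assumed UCP in $\R^N$ gives $w\equiv0$.

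\emph{Existence.} I would reduce to the bounded domain $B_R$ using the exterior Dirichlet-to-Neumann map $T:H^{1/2}(\partial B_R)\to H^{-1/2}(\partial B_R)$ of radiating Helmholtz solutions. The problem becomes: find $v\in H^1(B_R)$ with
$$\int_{B_R}\sigma\nabla v\cdot\nabla\overline{\varphi}-k^2\int_{B_R}\eta v\overline{\varphi}-\langle Tv,\varphi\rangle=\langle f,\varphi\rangle\quad\text{for all }\varphi\in H^1(B_R).$$
The key known facts about $T$ are that $-\mathrm{Re}\langle T\varphi,\varphi\rangle\geq -C\|\varphi\|^2_{L^2(\partial B_R)}$ up to a compact perturbation, and $\mathrm{Im}\langle T\varphi,\varphi\rangle\geq0$. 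Using these, I would split the sesquilinear form as a coercive part (from $\lambda_0$ and the sign property of $T$) plus a compact part (from the $L^2$ term via Rellich compactness and the compact part of $T$). The Fredholm alternative then says that existence follows from uniqueness for this bounded-domain problem. That uniqueness is the previous step: any solution $v$ of the reduced problem extends by the radiating exterior solution to a radiating solution of the homogeneous full-space problem. Finally I would glue $v$ with the exterior radiating solution having trace $v|_{\partial B_R}$ to obtain $u^s$.

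The main obstacle is the coercivity-plus-compactness decomposition of the form involving $T$. Handling it requires the Hankel-function series representation of $T$ on the sphere, as in \cite{CK98}. I would quote the needed properties of $T$ from there rather than rederive them.
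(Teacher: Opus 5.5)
Your proof is correct. The paper gives no argument beyond invoking the classical Phillips method, and your proof has the same skeleton. Uniqueness comes from the energy identity on $B_R$ (using that $\sigma$ and $\eta$ are real), Rellich's lemma and the UCP. Existence is deduced from uniqueness through the Fredholm alternative.

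The difference is in how you obtain the Fredholm structure. You truncate to $B_R$ with the exterior Dirichlet-to-Neumann map $T$ and prove a G\aa{}rding inequality for the sesquilinear form on $H^1(B_R)$. Phillips' method instead builds a parametrix: it glues, through cutoff functions, the solution of a local problem to a free-space outgoing solution built from $\Phi_k$. The scattering problem then becomes a second-kind equation $(I+K)g=F$, with $K$ compact by local Rellich compactness.

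Your route gives an explicit variational problem on a bounded domain, which is also convenient for finite elements. It does, however, rely on the Hankel-function analysis of $T$ on a sphere: boundedness from $H^{1/2}(\partial B_R)$ to $H^{-1/2}(\partial B_R)$ and the sign of its real part. In fact $-\mathrm{Re}\langle T\varphi,\varphi\rangle\geq 0$ in every dimension, so no compact correction is needed at that step. Phillips' construction avoids special-function estimates altogether and adapts readily to very rough coefficients, obstacles and other systems.

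One harmless slip: with your definition of $f$, integrating by parts puts $-\langle f,\varphi\rangle$, not $\langle f,\varphi\rangle$, on the right-hand side of the variational problem.
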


\subsection{Regularity results}
We collect a few classical regularity results for elliptic equations that we use in the sequel.
Throughout this subsection, $\sigma$ is a uniformly elliptic tensor in $\R^N$ satisfying \eqref{5.1FR}, $\eta\in L^{\infty}(\R^N)$ satisfying 
\eqref{boundseta} and $0\leq k\leq k_1$ for some $k_1>0$

We begin with the following Meyers theorem.

\begin{theorem}\cite[Theorem~2]{M63}, \label{theorem1.4}
    Let $r$, $L$ and $R$ be positive numbers and let $0<\lambda_0 <\lambda_1$.
    Then  there exists a constant $Q_1>2,$ depending on $r$, $L$, $R$, $\lambda_0$ and $\lambda_1$ only, such that for any $p \in [2,Q_1]$ the following holds.
    
    Let $\Omega \in \mathcal{A} (r,L,R)$. Then 
    there exists $C=C(p)$ depending on $r$, $L$, $R$, $\lambda_0$, $\lambda_1$ and $p$ only, such that for any $\phi \in W^{1,p} (\Omega )$, $f\in L^2(\Omega)$ and $F\in L^{p}(\Omega;\R^N),$ if $u$ solves
\begin{equation}\label{2.24}
\left\{
\begin{array}{ll}
\mathrm{div}(\sigma \nabla u)=f+ \mathrm{div}(F) &\text{in } \Omega \\
u=\phi &\text{on } \partial \Omega,
\end{array}
\right.
\end{equation}
then we have that
    \begin{equation}\label{23.new} 
        \|u\|_{W^{1,p} (\Omega )} \leq C (\|\phi \|_{W^{1,p}(\Omega )}+ \|f\|_{L^2 (\Omega )} +\|F\|_{L^p(\Omega)}).
    \end{equation}
\end{theorem}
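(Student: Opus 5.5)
The statement is Meyers' theorem, quoted from \cite{M63}. I would prove it the standard way: reduce to zero boundary data, then combine a gradient reverse Hölder inequality with Gehring's lemma, keeping track of how every constant depends on $r$, $L$, $R$, $\lambda_0$, $\lambda_1$.

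\textbf{Step 1: reduction to homogeneous data.} Set $w=u-\phi$. Then $w\in H^1_0(\Omega)$ and
$$\mathrm{div}(\sigma\nabla w)=f+\mathrm{div}(F-\sigma\nabla\phi).$$
The new vector field satisfies $\|F-\sigma\nabla\phi\|_{L^p}\leq \|F\|_{L^p}+\lambda_1\|\nabla\phi\|_{L^p}$. So it suffices to treat $\phi=0$.

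\textbf{Step 2: handling the source $f$.} Since $\Omega\subset B_R$ is bounded, solve $\Delta v=f$ with $v=0$ on $\partial B_R$, after extending $f$ by zero. Then $f=\mathrm{div}(\nabla v)$ with $\|\nabla v\|_{L^{p}}\leq C\|f\|_{L^2}$. This holds for $p\leq 2^*$ by Sobolev embedding of $W^{2,2}(B_R)$ when $N\geq3$, and for every finite $p$ when $N=2$. Shrinking $Q_1$ if needed, we may absorb $f$ into $F$.

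\textbf{Step 3: the $L^2$ estimate.} We must first bound $\|\nabla w\|_{L^2}$. Test the equation with $w$, use ellipticity and Poincaré's inequality on $\Omega\subset B_R$, and Hölder with $p\geq2$:
$$\lambda_0\|\nabla w\|_{L^2}^2\leq \|G\|_{L^2}\|\nabla w\|_{L^2},\qquad \|G\|_{L^2}\leq C(R)\|G\|_{L^p},$$
where $G$ is the combined vector field from Steps 1 and 2.

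\textbf{Step 4: reverse Hölder and Gehring.} This is the core of the argument and the main obstacle. Extend $w$ by zero outside $\Omega$. For every ball $B_\rho(x_0)$ I will prove
$$\fint_{B_{\rho/2}}|\nabla w|^2\leq C\left(\fint_{B_\rho}|\nabla w|^{q}\right)^{2/q}+C\fint_{B_\rho}|G|^2,\qquad q=\frac{2N}{N+2}<2.$$
\emph{Interior balls.} Use Caccioppoli's inequality, subtracting the mean of $w$ over the ball, and then the Sobolev--Poincaré inequality.
\emph{Balls meeting $\partial\Omega$.} Here no mean is subtracted, since $w=0$ outside $\Omega$. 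One needs a Sobolev--Poincaré inequality for functions vanishing on a set occupying a fixed fraction of the ball. The Lipschitz character of $\Omega$ gives exactly this: $B_\rho(x_0)\setminus\Omega$ has measure at least $c(L)|B_\rho|$ for all $\rho\leq r$. Balls with $\rho>r$ are handled using $\Omega\subset B_R$, with constants depending on $r/R$.

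Gehring's lemma, in Giaquinta--Modica's version with a right-hand side, then gives $\nabla w\in L^{p}$ for $p\in[2,Q_1]$. The exponent $Q_1>2$ depends only on the reverse Hölder constant, hence only on $r$, $L$, $R$, $\lambda_0$, $\lambda_1$. It also gives
$$\|\nabla w\|_{L^p}\leq C\bigl(\|\nabla w\|_{L^2}+\|G\|_{L^p}\bigr).$$
Combining with Step 3 and Poincaré's inequality bounds $\|w\|_{W^{1,p}}$. Adding back $\phi$ yields \eqref{23.new}.

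The care needed in Step 4 is to make all constants uniform over the class $\mathcal{A}(r,L,R)$. In particular, the exterior measure density bound must be quantified in terms of $r$ and $L$ alone.
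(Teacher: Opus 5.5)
Your proof is correct, but it does not follow the route behind the paper's statement. The paper gives no argument of its own and simply quotes \cite[Theorem~2]{M63}. Meyers' proof is a perturbation of the Laplacian, not a reverse H\"older/Gehring argument. In your notation, he rewrites $\mathrm{div}(\sigma\nabla w)=\mathrm{div}(G)$ as $\Delta w=\mathrm{div}\bigl((\mathbb{I}_N-\sigma/\lambda_1)\nabla w+G/\lambda_1\bigr)$ and uses $\|\mathbb{I}_N-\sigma/\lambda_1\|\leq 1-\lambda_0/\lambda_1<1$. He then shows by Riesz--Thorin interpolation that the $L^p$ norm $K_p$ of the Dirichlet-Laplacian map $G\mapsto\nabla w$, which is at most $1$ on $L^2$, tends to $1$ as $p\to 2$. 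For $p$ with $K_p(1-\lambda_0/\lambda_1)<1$, a contraction argument gives the bound.

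That approach buys more than an a priori estimate: it gives an isomorphism $W^{1,p}_0(\Omega)\to W^{-1,p}(\Omega)$ and an explicit threshold for $Q_1$ in terms of $\lambda_0/\lambda_1$ and $K_p$. However, it needs as input a $W^{1,p_1}$ estimate, for some $p_1>2$, for the Dirichlet Laplacian on $\Omega$. For merely Lipschitz open sets this is a nontrivial fact, and its uniformity over $\mathcal{A}(r,L,R)$ has to be checked separately.

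Your argument needs only Caccioppoli, Sobolev--Poincar\'e and the uniform exterior measure density. You correctly extract the density from the cone condition for $\rho\leq r$ and from $\Omega\subset B_R$ for larger radii. Hence your route yields directly what the statement asserts: $Q_1$ and $C(p)$ depend on $r$, $L$, $R$, $\lambda_0$, $\lambda_1$ only. It also works for any open set with a uniform exterior density.

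Two points you handle correctly deserve to be made explicit. First, in the Giaquinta--Modica lemma the gain in integrability depends only on the reverse H\"older constant, with the integrability of $G$ acting just as a cap; so one $Q_1$ serves every $p\in[2,Q_1]$. Second, the cap $Q_1\leq 2^*$ from Step~2 is genuinely necessary, not just convenient. For $N\geq 3$ the estimate with $\|f\|_{L^2(\Omega)}$ on the right-hand side fails for $p>2^*$ already when $\sigma=\mathbb{I}_N$.
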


Next we write the Caccioppoli inequality and a classical boundedness result.

\begin{lem}
\label{caccioppoli}
Let $\Omega$ be any open set and 
    let $v\in W^{1,2}(\Omega)$ be a solution to $\mathrm{div}(\sigma \nabla v) +k^2\eta v =f + \mathrm{div}(F)$ in $\Omega$, for some $f\in L^2(\Omega)$ and $F\in L^2(\Omega;\R^N)$. Then, for any $\Omega ' \Subset \Omega$ there exists a constant $C$, depending on $\mathrm{dist}(\Omega',\partial\Omega)$, $\lambda_0$, $\lambda_1$, $\delta_1$ and $k_1$ only, such that the following Caccioppoli inequalty
    \begin{equation}\label{disuguaglianza}
        \|\nabla v \|_{L^2(\Omega ')} \leq C( \| v \|_{L^2(\Omega )}  + \|f\|_{L^2(\Omega )} + \|F\|_{L^2(\Omega )}).
    \end{equation}
  holds  and, provided $f\equiv 0$ and $F\equiv 0$ in $\Omega$, we also have
     \begin{equation}\label{boundednessapriori}
        \|v\|_{L^\infty (\Omega ')} \leq C \|v\|_{L^2 (\Omega )}.
    \end{equation}   
    \end{lem}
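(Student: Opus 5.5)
The statement to prove is Lemma~\ref{caccioppoli}: the Caccioppoli inequality \eqref{disuguaglianza} and the boundedness estimate \eqref{boundednessapriori}. I would follow the standard energy method, working with complex-valued $v$ and real parts of sesquilinear forms.

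\emph{Caccioppoli inequality.} Let $d=\mathrm{dist}(\Omega',\partial\Omega)$. Choose a cutoff $\chi\in C^\infty_c(\Omega)$ with $0\leq\chi\leq 1$, $\chi\equiv1$ on $\Omega'$ and $|\nabla\chi|\leq C/d$; such a $\chi$ can be obtained by mollifying the indicator of the $d/2$-neighbourhood of $\Omega'$.

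Test the weak formulation with $\chi^2\bar v\in W^{1,2}_0(\Omega)$ and take real parts. This gives
\[
\int\chi^2\sigma\nabla v\cdot\nabla\bar v=-2\,\mathrm{Re}\int\chi\bar v\,\sigma\nabla v\cdot\nabla\chi+k^2\int\eta\chi^2|v|^2-\mathrm{Re}\int f\chi^2\bar v+\mathrm{Re}\int F\cdot\nabla(\chi^2\bar v).
\]
Ellipticity \eqref{5.1FR} bounds the left-hand side below by $\lambda_0\|\chi\nabla v\|_{L^2}^2$. On the right-hand side:
\begin{itemize}
\item the first term is bounded by $2\lambda_1\|\chi\nabla v\|_{L^2}\|v\nabla\chi\|_{L^2}$;
\item the $\eta$ term is at most $k_1^2\delta_1\|v\|_{L^2}^2$;
\item the $f$ term is at most $\|f\|_{L^2}\|v\|_{L^2}$;
\item the $F$ term, after expanding $\nabla(\chi^2\bar v)=\chi^2\nabla\bar v+2\chi\bar v\nabla\chi$, is at most $\|F\|_{L^2}\bigl(\|\chi\nabla v\|_{L^2}+2\|v\nabla\chi\|_{L^2}\bigr)$.
\end{itemize}
Applying Young's inequality $ab\leq\epsilon a^2+C_\epsilon b^2$ with $\epsilon$ small, I absorb every $\|\chi\nabla v\|_{L^2}$ factor into the left-hand side. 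This yields
\[
\|\nabla v\|_{L^2(\Omega')}^2\leq C\bigl(\|v\|_{L^2(\Omega)}^2+\|f\|_{L^2(\Omega)}^2+\|F\|_{L^2(\Omega)}^2\bigr),
\]
with $C$ depending only on $d$, $\lambda_0$, $\lambda_1$, $\delta_1$, $k_1$. Taking square roots gives \eqref{disuguaglianza}.

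\emph{Boundedness estimate.} Now assume $f\equiv0$ and $F\equiv0$. The real and imaginary parts of $v$ each solve the same real equation, because $\sigma$ and $\eta$ are real. So I reduce to a real solution $w$ of $\mathrm{div}(\sigma\nabla w)+c\,w=0$, where $c=k^2\eta$ is bounded with $\|c\|_{L^\infty}\leq k_1^2\delta_1$. This is an equation with bounded measurable coefficients and a bounded zeroth-order term.

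The local boundedness theorem of De Giorgi--Nash--Moser (e.g.\ Gilbarg--Trudinger, Theorem~8.17) gives
\[
\sup_{B_{\rho}(x)}|w|\leq C\rho^{-N/2}\|w\|_{L^2(B_{2\rho}(x))}\qquad\text{for }\rho\leq1,
\]
with $C$ depending only on $\lambda_0$, $\lambda_1$ and $\rho^2\|c\|_{L^\infty}$. I take $\rho=\min\{d/2,1\}$, so that $B_{2\rho}(x)\subset\Omega$ for every $x\in\Omega'$. Covering $\Omega'$ by the balls $B_\rho(x)$, $x\in\Omega'$, then gives \eqref{boundednessapriori} with a constant depending on $d$, $\lambda_0$, $\lambda_1$, $\delta_1$, $k_1$ only.

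\emph{Main obstacle.} The only nontrivial ingredient is the Moser iteration behind the local boundedness estimate. I would quote it as a classical result rather than reprove it. Within it, the one point to check is that the zeroth-order term $k^2\eta w$ does not spoil the iteration. It does not, since the term is bounded and enters only through the lower-order part of the energy estimates, where it can be controlled by the same Young-inequality absorption used above.
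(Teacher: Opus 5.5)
Your proposal is correct and follows the paper's route. The paper also obtains \eqref{disuguaglianza} by testing with $\chi^2 v$, and your use of $\chi^2\bar v$ with real parts is the right way to handle complex-valued solutions. For \eqref{boundednessapriori} the paper cites the interior H\"older estimate, Theorem~8.24 in \cite{GT01}, which directly bounds $\|v\|_{L^\infty(\Omega')}$ by $\|v\|_{L^2(\Omega)}$; your local boundedness Theorem~8.17, with the real/imaginary splitting and the radius $\rho=\min\{d/2,1\}$, works equally well.
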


\begin{proof}
    Let $\chi\in C^\infty _0 (\Omega)$ be such that $0\leq \chi\leq 1$ in $\Omega$ and $\chi \equiv 1$ in $\Omega '$.
    The Caccioppoli inequality \eqref{disuguaglianza} follows by standard arguments using $\chi ^2 v$ as a test function.

About 
\eqref{boundednessapriori}, see, for instance, Theorem~8.24 in \cite{GT01}.
\end{proof}

\begin{lem}\label{Lemma5marzo}
    Let $\Omega$ be any open set and let $u\in W^{1,2}(\Omega)$ be a solution to $\Delta u=f$ in $\Omega$ for some $f\in W^{\ell , 2}(\Omega)$ with $\ell \geq 1.$  Then for any 
   $\Omega ' \Subset \Omega$, we have that $u\in W^{\ell +2, 2}(\Omega ')$  and there exists a constant $C$, 
 depending on $\mathrm{dist}(\Omega',\partial\Omega)$ and $\ell$ only, such that  
    \begin{equation*}
        \|u\|_{W^{\ell +2, 2}(\Omega ')} \leq C(\|u\|_{L^2(\Omega )} + \|f\|_{W^{\ell, 2}(\Omega )}).
    \end{equation*}
    \end{lem}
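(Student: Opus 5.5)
This is classical interior elliptic regularity for the Laplacian, and I would prove it by induction on $\ell$ using difference quotients, or equivalently by mollification.

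\emph{Base estimate.} First, for $\ell=0$ I establish the $W^{2,2}$ bound
$$\|u\|_{W^{2,2}(\Omega')}\leq C(\|u\|_{L^2(\Omega)}+\|f\|_{L^2(\Omega)}),$$
with $C$ depending on $d=\mathrm{dist}(\Omega',\partial\Omega)$ only. To get it, fix an intermediate open set $\Omega''$ with $\Omega'\Subset\Omega''\Subset\Omega$ and both distances comparable to $d/2$. The Caccioppoli inequality \eqref{disuguaglianza}, applied with $\sigma=\mathbb{I}_N$, $k=0$ and $F=0$, bounds $\|\nabla u\|_{L^2(\Omega'')}$ by $\|u\|_{L^2(\Omega)}+\|f\|_{L^2(\Omega)}$. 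Next take a cutoff $\chi\in C^\infty_0(\Omega'')$ with $\chi\equiv1$ on $\Omega'$ and $|\nabla\chi|+|D^2\chi|\leq C(d)$. Then $w=\chi u$ is compactly supported in $\R^N$ and satisfies
$$\Delta w=\chi f+2\nabla\chi\cdot\nabla u+u\Delta\chi\in L^2(\R^N).$$
Plancherel (or integrating by parts twice, $\int|D^2w|^2=\int|\Delta w|^2$ for compactly supported smooth $w$, then density) gives $\|D^2 w\|_{L^2}\le\|\Delta w\|_{L^2}$. For a merely $W^{1,2}$ function $u$, I first justify $w\in W^{2,2}$ by the Nirenberg difference-quotient argument, or by applying the identity to the mollified $w_\epsilon$, noting that $\Delta w_\epsilon=(\Delta w)_\epsilon$ is uniformly bounded in $L^2$.

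\emph{Induction step.} Assume the result holds up to $\ell-1$ (with $\ell=0$ as the base case), and let $f\in W^{\ell,2}(\Omega)$. Choose a chain of nested sets $\Omega'\Subset\Omega_{\ell}\Subset\cdots\Subset\Omega_1\Subset\Omega$, each separated from the next by a distance $d/(\ell+1)$. For each $j$, the difference quotient $D^h_j u$ solves $\Delta(D^h_j u)=D^h_j f$ on $\Omega_1$ for $|h|$ small. Applying the $(\ell-1)$ estimate to $D^h_ju$ and $D^h_jf$, and using $\|D^h_j f\|_{W^{\ell-1,2}(\Omega_1)}\le\|\partial_j f\|_{W^{\ell-1,2}(\Omega)}$ together with $\|D^h_ju\|_{L^2(\Omega_1)}\le\|\partial_ju\|_{L^2(\Omega_2')}$ for a slightly larger set, I obtain uniform bounds on $\|D^h_j u\|_{W^{\ell+1,2}}$. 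Letting $h\to0$ then shows $\partial_ju\in W^{\ell+1,2}$ with the estimate
$$\|\partial_j u\|_{W^{\ell+1,2}}\le C(\|\nabla u\|_{L^2}+\|f\|_{W^{\ell,2}}),$$
on a smaller set. The term $\|\nabla u\|_{L^2}$ is controlled by Caccioppoli once more. Summing over $j$ gives the $W^{\ell+2,2}(\Omega')$ bound.

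An alternative is to differentiate the equation, so that $\partial_j u$ solves $\Delta(\partial_j u)=\partial_j f$. This requires knowing a priori that $\partial_ju\in W^{1,2}$, which the previous step already supplies.

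\emph{Main obstacle.} The only delicate points are justifying the regularity of the weak solution rather than just proving an a priori estimate (handled by difference quotients or mollification), and tracking that the constants depend only on $d$ and $\ell$. The latter follows because all cutoffs and nested domains are chosen with derivative bounds depending only on $d$ and $\ell$. Alternatively, one could simply cite standard interior regularity, e.g.\ \cite[Theorem~8.10]{GT01}.
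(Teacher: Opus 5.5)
Your proposal is correct and takes essentially the same route as the paper: the paper's proof just cites the Caccioppoli inequality (Lemma~\ref{caccioppoli}), which replaces $\|u\|_{W^{1,2}}$ by $\|u\|_{L^2(\Omega)}+\|f\|_{L^2(\Omega)}$, together with the interior regularity Theorem~8.10 of \cite{GT01}. You unpack that cited theorem for the Laplacian (cutoff plus the identity $\|D^2w\|_{L^2}=\|\Delta w\|_{L^2}$ for the $W^{2,2}$ base step, then induction by difference quotients or by differentiating the equation), with Caccioppoli playing exactly the same role.
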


\begin{proof}
    It follows immediately by Lemma~\ref{caccioppoli} and Theorem~8.10 in \cite{GT01}.
\end{proof}

We conclude this section with a regularity result concerning the far-field pattern.

\begin{rem}\label{regularityfarfield}
Let us fix $R_0>0$ and let $k$ be such that
\begin{equation}\label{condonk}
k_0\leq k \leq k_1\text{ if }N=2\quad\text{or}\quad 0<k\leq k_1\text{ if }N\geq 3
\end{equation}
for some constants $0<k_0<k_1$.
Let $w$ be a radiating solution to $\Delta w+k^2w=0$ in $\R^N\setminus \overline{B_{R_0}}$.
Let $w_{\infty}:\SSS^{N-1}\to\C$ be its far-field pattern. Let
$A$ be the annulus given by $A=B_{R_0+2}\setminus \overline{B_{R_0}}$.
Then there exists a constant $C$, depending on $R_0$, $k_0$ if $N=2$, and $k_1$ only, such that
\begin{equation}\label{boundfarfield}
|w_{\infty}(\hat{x})|\leq C\| w  \|_{L^2(A)}\quad\text{for any }\hat{x}\in \SSS^{N-1}
\end{equation}
and
\begin{equation}\label{Lipfarfield}
|w_{\infty}(\hat{x}_1)-w_{\infty}(\hat{x}_2)|\leq  C\| w  \|_{L^2(A)}\quad\text{for any }\hat{x}_1,\, \hat{x}_2\in \SSS^{N-1}.
\end{equation}

In fact, by standard elliptic estimates, namely by Lemma~\ref{caccioppoli} and repeated applications of Lemma~\ref{Lemma5marzo},
 we can find a constants $C_1$, depending on $k_1$ and $R_0$ only, such that
 \begin{equation}\label{boundaryest}
 \|w \|_{ L^{\infty}(\partial B_{R_0+1})} +\left\|\frac{\partial w}{\partial\nu} \right\|_{ L^{\infty}(\partial B_{R_0+1})} \leq C_1 \|w\|_{ L^{2}(A)}.
\end{equation}
Then \eqref{boundfarfield} and \eqref{Lipfarfield} follows from the representation \eqref{farfield} with $R=R_0+1$.
\end{rem}

\section{Continuity of the direct problem with respect to the inhomogeneity}\label{contsec}

We are interested in the continuity of the solutions to \eqref{pbScattering} with respect to the coefficients $\sigma$ and $\eta$. For our purposes we require a quantitative version of continuity, namely we prove a H\"older type continuity. We need to restrict our class of coefficients, starting with the following abstract formulation.

\begin{defi}\label{admissibleclass}
We say that $\nn_{ad}\subset\tilde{\nn}$ is an \emph{admissible class} if for any $s\geq R_0+1$ there exists $C=C(\nn_{ad},s)$, depending only on the class $\nn_{ad}$ and on $s$, such that the following holds. For any $(\sigma , \eta , k)\in \nn_{ad}$ and any $f\in L^2(\R^N)$ and $F\in L^2(\R^N;\R^N)$, with
$f\equiv 0$ and $F\equiv 0\in\R^N$ almost everywhere in $\R^N\setminus B_{R_0}$, the problem
    \begin{equation}\label{pb1}
\left\{
\begin{array}{ll}
\mathrm{div}(\sigma \nabla w) +k^2 \eta w= f+\mathrm{div}(F) &\text{in } \R^N \\
\displaystyle \lim_{r\to +\infty } r^{\frac{N-1}{2}} \left(\frac{\partial w}{\partial r} - kiw\right) =0  &r=|x|,
\end{array}
\right.
\end{equation}
admits a solution $w$ and for any such solution $w$ we have
\begin{equation}\label{aprioriboundadm}
    \|w\|_{L^2(B_s)} \leq C(\|f\|_{L^2(B_{R_0})} + \|F\|_{L^2(B_{R_0})}).
\end{equation}
\end{defi}

\begin{rem}
If $(\sigma , \eta , k)\in \nn_{ad}$, then \eqref{pb1} and \eqref{pbScattering} admit a unique solution. Moreover, if $u=u^i+u^s$ solves \eqref{pbScattering} with $u^i$ given by \eqref{incidentfield} for some $d\in \SSS^{N-1}$, we have that $u^s$ solves \eqref{pb1} with
$f=-k^2\eta u^i$ and $F=-\sigma\nabla u^i$. Hence, for any $s\geq R_0+1$
there exists $C_1=C_1(\nn_{ad},s)$, depending only on the class $\nn_{ad}$ and on $s$, such that 
\begin{equation}\label{aprioribound}
    \|u\|_{L^2(B_s)}+\|u^s\|_{L^2(B_s)} \leq C_1.
\end{equation}
\end{rem}

The first result we prove is the following.

\begin{theorem}\label{Teorema1}
Let $\nn_{ad}$ be an admissible class as in Definition~\ref{admissibleclass}.

For any $(\sigma , \eta , k ) \in \nn_{ad}$ and any $d\in \SSS ^{N-1}$, let $u$ be the solution to \eqref{pbScattering} with $u^i$ given by \eqref{incidentfield}.

For any $s\geq R_0+1$,
let $Q_1>2$ and $C(Q_1)$ be as in Theorem~\ref{theorem1.4} for $\sigma$ and $\Omega=B_s$, thus $Q_1=Q_1(s)$ and $C(Q_1)$ depend on $s$,
$\lambda_0$ and $\lambda_1$ only.

Then
there exists $C_2=C_2(\nn_{ad},s)$, depending only on $\nn_{ad}$ and $s$, such that  
\begin{equation}\label{5.12}
    \|u\|_{W^{1,Q_1}(B_s)} \leq C_2.
\end{equation} 
\end{theorem}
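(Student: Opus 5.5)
We would derive \eqref{5.12} from Meyers' Theorem~\ref{theorem1.4} on a slightly larger ball, with the $L^2$ bound \eqref{aprioribound} controlling all the lower order terms. The point is that on an annulus around $\partial B_s$ the equation is just the Helmholtz equation, so the boundary datum there is smooth and quantitatively controlled.

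First, fix $s\geq R_0+1$ and set $s'=s+1$. By \eqref{aprioribound} applied with $s'$ and with $s+2$, we have $\|u\|_{L^2(B_{s+2})}\leq C_1(\nn_{ad},s+2)$. In $\R^N\setminus\overline{B_{R_0}}$, $u$ solves $\Delta u+k^2u=0$. Hence, on the annulus $B_{s+2}\setminus\overline{B_{R_0}}$, Lemma~\ref{caccioppoli} and repeated applications of Lemma~\ref{Lemma5marzo} (with $f=-k^2u$, bootstrapping) give interior $W^{\ell,2}$ bounds. By Sobolev embedding we then obtain $\|u\|_{C^1(\overline{B_{s+3/2}}\setminus B_{R_0+1/2})}\leq C$, with $C$ depending on $\nn_{ad}$ and $s$ only (through $k_1$, $R_0$, $s$).

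Next, we apply Theorem~\ref{theorem1.4} on $\Omega=B_s$, with the equation rewritten as $\mathrm{div}(\sigma\nabla u)=-k^2\eta u$. The right-hand side is $f=-k^2\eta u\in L^2(B_s)$ and $F=0$, with $\|f\|_{L^2(B_s)}\leq k_1^2\delta_1C_1$. For the boundary datum we take $\phi=\chi u$, where $\chi$ is a smooth cutoff with $\chi\equiv1$ near $\partial B_s$ and supported in the annulus where we have $C^1$ bounds. Then $\phi\in W^{1,Q_1}(B_s)$ with norm bounded by $C(\nn_{ad},s)$, and $u=\phi$ on $\partial B_s$. Since balls belong to some class $\mathcal{A}(r,L,R)$ with constants depending on $s$ only, \eqref{23.new} with $p=Q_1$ yields
\[
\|u\|_{W^{1,Q_1}(B_s)}\leq C(Q_1)\big(\|\phi\|_{W^{1,Q_1}(B_s)}+k_1^2\delta_1C_1\big)=:C_2 .
\]

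The step requiring the most care is the control of the boundary datum. It must be bounded in $W^{1,Q_1}$ and not merely in $H^{1/2}$, and the constants must depend only on $\nn_{ad}$ and $s$. This is handled by the interior regularity for Helmholtz outside $B_{R_0}$, which requires $s\geq R_0+1$ so that $\partial B_s$ stays at positive distance from $B_{R_0}$. Uniqueness of solutions to \eqref{pbScattering}, needed to identify $u$ with the Meyers solution, follows from admissibility, since the Dirichlet problem on $B_s$ for $\mathrm{div}(\sigma\nabla\cdot)$ without zero order term is uniquely solvable.
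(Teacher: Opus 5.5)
Your proof is correct and follows the paper's route: bound $f=-k^2\eta u$ in $L^2(B_s)$ via \eqref{aprioribound}, use interior regularity for the Helmholtz equation in an annulus around $\partial B_s$ (this is where $s\geq R_0+1$ enters) to obtain a boundary datum $\phi$ bounded in $W^{1,Q_1}(B_s)$, and conclude with Theorem~\ref{theorem1.4}. The only differences are minor: you take $\phi=\chi u$ with a cutoff, whereas the paper bounds the trace of $u$ in $W^{1-1/Q_1,Q_1}(\partial B_s)$ and then extends it; and your closing remark about uniqueness is unnecessary, since the Meyers estimate applies directly to $u$ restricted to $B_s$.
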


\begin{proof}
Let us fix $s\geq R_0+1$. We call $A$ the annulus given by $A=B_{s+1}\setminus \overline{B_{s-1}}$.
We have that $u$ solves
 \begin{equation*}
\left\{
\begin{array}{ll}
\mathrm{div}(\sigma  \nabla u) =-k^2 \eta  u =:f &\text{in } B_s \\
u=\phi &\text{on } \partial B_s 
\end{array}
\right.
\end{equation*}
where $\phi$ has the same trace of $u$ on $\partial B_s$.
  By \eqref{aprioribound} we have that  $\|f\|_{L^2(B_s)} \leq k_1^2\delta_1C_1(\nn_{ad},s)$ and $\|u\|_{L^2(A)}\leq C_1(\nn_{ad},s+1)$.
  
 We note that $u$ solves  $\Delta u +k^2u=0$  in $A$. By standard elliptic estimates, namely by Lemma~\ref{caccioppoli} and repeated applications of Lemma~\ref{Lemma5marzo},
 we can find constants $C_3$ and $\tilde{C}_3$, depending on $k_1$, $s$ and $Q_1(s)$ only, such that
 \begin{equation*}
 \|u \|_{ W^{1-1/Q_1, Q_1}(\partial B_s)} \leq C_3 \|u\|_{ L^{2}(A)}
\end{equation*}
and we can choose $\phi$ such that
$$
 \|\phi \|_{ W^{1, Q_1}( B_s)} \leq \tilde{C}_3 \|u\|_{ L^{2}(A)}.
$$
The conclusion immediately follows by using Theorem~\ref{theorem1.4}.
 \end{proof}

We are ready to prove our H\"older continuity result.

\begin{theorem}\label{holdercontthm}
Let $\nn_{ad}$ be an admissible class as in Definition~\ref{admissibleclass}.

For any $(\sigma_1 , \eta_1 , k )$ and $(\sigma_2,\eta_2,k) \in \nn_{ad}$ and any $d\in \SSS ^{N-1}$, 
let $u^i$ be given by \eqref{incidentfield} and let $u_j$, for $j=1,2$, be the solution to
\begin{equation}\label{pbScattering3}
\left\{
\begin{array}{ll}
\mathrm{div} (\sigma_j \nabla u_j) +k^2 \eta_j u_j= 0 &\text{in } \R^N \\
u_j=u^i + u^s_j &\text{in } \R^N \\
\displaystyle \lim_{r\to +\infty } r^{\frac{N-1}{2}} \left(\frac{\partial u^s_j}{\partial r} - kiu^s_j\right) =0& r=|x|.
\end{array}
\right.
\end{equation}

Then for any $s\geq R_0+1$ there exists $\tilde{C}=\tilde{C}(\nn_{ad},s)$, depending on $\nn_{ad}$ and $s$ only, such that
\begin{equation}\label{dis1}
    \|u_2-u_1\|_{L^2(B_s)} \leq\tilde{C}\left(\|\eta _2 - \eta _1\|_{L^1(B_{R_0})}^{1/2} + \|\sigma _2-\sigma _1\|_{L^1(B_{R_0})} ^\beta\right),
\end{equation}
where
\begin{equation*}
    \beta = \frac{Q_1-2}{2Q_1},
\end{equation*}
and $Q_1=Q_1(s)>2$ is the same as the one in Theorem~\ref{Teorema1}.

Moreover, provided \eqref{condonk} holds for the class $\nn_{ad}$, there exists a constant $\tilde{C}_1=\tilde{C}_1(\nn_{ad})$, depending on  $\nn_{ad}$ only, such that, for any $d\in \SSS^{N-1}$ and any $\hat{x}\in\SSS^{N-1}$, we have
\begin{multline}\label{holderfarfield}
|u_{\infty}(\sigma_2,\eta_2;k,d;\hat{x})-u_{\infty}(\sigma_1,\eta_1;k,d;\hat{x})|\\\leq \tilde{C}_1\left(\|\eta _2 - \eta _1\|_{L^1(B_{R_0})}^{1/2} + \|\sigma _2-\sigma _1\|_{L^1(B_{R_0})} ^\beta\right).
\end{multline}
\end{theorem}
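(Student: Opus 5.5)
The plan is to write the difference $w=u_2-u_1=u^s_2-u^s_1$ as the radiating solution of a problem of the form \eqref{pb1} for the coefficients $(\sigma_2,\eta_2,k)$. Then the admissibility bound \eqref{aprioriboundadm} controls $\|w\|_{L^2(B_s)}$ by the $L^2$ norms of the source terms. Subtracting the equations gives
\[
\mathrm{div}(\sigma_2\nabla w)+k^2\eta_2 w=-k^2(\eta_2-\eta_1)u_1-\mathrm{div}\big((\sigma_2-\sigma_1)\nabla u_1\big)\quad\text{in }\R^N,
\]
so we take $f=-k^2(\eta_2-\eta_1)u_1$ and $F=-(\sigma_2-\sigma_1)\nabla u_1$. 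Both vanish outside $B_{R_0}$ by \eqref{5.9FR}, and $w$ is radiating as a difference of radiating fields. Hence, for any $s\geq R_0+1$,
\[
\|w\|_{L^2(B_s)}\leq C(\nn_{ad},s)\big(k_1^2\|(\eta_2-\eta_1)u_1\|_{L^2(B_{R_0})}+\|(\sigma_2-\sigma_1)\nabla u_1\|_{L^2(B_{R_0})}\big).
\]

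The next step is to estimate these two terms using only $L^1$ norms of the coefficient differences. This is where the exponents come from, and it is the main point of the proof.

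For the $\eta$ term we need $u_1\in L^\infty(B_{R_0})$ uniformly. The natural tool is \eqref{boundednessapriori} of Lemma~\ref{caccioppoli}, applied on $B_{R_0}\Subset B_{R_0+1}$ together with the bound \eqref{aprioribound}. Lemma~\ref{caccioppoli} has no lower order term restriction, and $u_1$ solves the homogeneous equation, so this gives $\|u_1\|_{L^\infty(B_{R_0})}\leq C$. Then
\[
\|(\eta_2-\eta_1)u_1\|_{L^2}\leq C\|\eta_2-\eta_1\|_{L^2}\leq C(2\delta_1)^{1/2}\|\eta_2-\eta_1\|_{L^1}^{1/2},
\]
using $|\eta_2-\eta_1|\leq 2\delta_1$. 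If the $L^\infty$ bound were unavailable, I would fall back on Meyers and Sobolev, but \eqref{boundednessapriori} seems directly applicable.

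For the $\sigma$ term we use Theorem~\ref{Teorema1}, which gives $\|\nabla u_1\|_{L^{Q_1}(B_{R_0})}\leq C_2$. By Hölder with exponents $Q_1/2$ and $Q_1/(Q_1-2)$ applied to $|\sigma_2-\sigma_1|^2|\nabla u_1|^2$,
\[
\|(\sigma_2-\sigma_1)\nabla u_1\|_{L^2}\leq \|\sigma_2-\sigma_1\|_{L^{2Q_1/(Q_1-2)}}\|\nabla u_1\|_{L^{Q_1}}.
\]
Since $\|\sigma_2-\sigma_1\|\leq 2\lambda_1$, interpolating $L^{p}$ with $p=2Q_1/(Q_1-2)$ between $L^1$ and $L^\infty$ gives $\|\sigma_2-\sigma_1\|_{L^p}\leq C\|\sigma_2-\sigma_1\|_{L^1}^{1/p}$, and $1/p=(Q_1-2)/(2Q_1)=\beta$. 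This proves \eqref{dis1}. The dependence on $s$ enters through $Q_1(s)$; taking $s=R_0+1$ in Theorem~\ref{Teorema1} would in fact suffice, but we follow the statement.

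For the far-field estimate \eqref{holderfarfield}, note that $w$ is a radiating solution of $\Delta w+k^2w=0$ outside $\overline{B_{R_0}}$, and its far field is $u_\infty(\sigma_2,\dots)-u_\infty(\sigma_1,\dots)$. Remark~\ref{regularityfarfield}, via \eqref{boundfarfield}, gives $|w_\infty(\hat x)|\leq C\|w\|_{L^2(B_{R_0+2}\setminus\overline{B_{R_0}})}$. We then apply \eqref{dis1} with $s=R_0+2$, which fixes $Q_1$ and hence $\beta$ depending only on $\nn_{ad}$. The only delicate point is keeping track that all constants depend only on $\nn_{ad}$ and $s$.
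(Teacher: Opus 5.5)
Your proposal is correct and follows the paper's proof essentially step by step. It uses the same equation for $w=u_2-u_1$ with sources $f=-k^2(\eta_2-\eta_1)u_1$ and $F=-(\sigma_2-\sigma_1)\nabla u_1$, the admissibility bound \eqref{aprioriboundadm}, the $L^\infty$ bound \eqref{boundednessapriori} via \eqref{aprioribound}, the $W^{1,Q_1}$ bound of Theorem~\ref{Teorema1} combined with H\"older and $L^1$--$L^\infty$ interpolation to get $\beta=(Q_1-2)/(2Q_1)$, and \eqref{boundfarfield} for the far-field estimate, where your explicit choice $s=R_0+2$ simply makes precise what the paper leaves implicit.
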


\begin{proof}
    
If we consider $w=u_2-u_1$, by an easy computation we obtain that
$$ \mathrm{div}(\sigma _2 \nabla w) +k^2\eta _2 w=- \mathrm{div}((\sigma_2-\sigma_1)\nabla u_1) -k^2(\eta _2-\eta_1)u_1\quad \text{in }\R^N$$
and
$$
\displaystyle \lim_{r\to +\infty } r^{\frac{N-1}{2}} \left(\frac{\partial w}{\partial r} - kiw\right) =0,\quad r=|x|.
$$
Calling $f=-k^2(\eta _2 - \eta _1) u_1$ and $F=-(\sigma _2-\sigma _1) \nabla u_1$, we have that $w$ satisfies \eqref{pb1} with $\sigma$ and $\eta$ replaced by $\sigma_2$ and $\eta_2$ respectively. By \eqref{aprioriboundadm},
\begin{multline*}
 \|u_2-u_1\|_{L^2(B_s)} \\\leq C\left(k_1^2\|u_1\|_{L^\infty(B_{R_0})}\|\eta _2 - \eta _1\|_{L^2(B_{R_0})} + \|\nabla u_1\|_{L^{Q_1}(B_{R_0})}\|\sigma _2-\sigma _1\|_{L^q (B_{R_0})} \right)
 \end{multline*}
where $q$ is such that
\begin{equation*}
    \frac{1}{q} + \frac{1}{Q_1} =\frac 1 2 .
\end{equation*}
We can conclude the proof of \eqref{dis1} by using \eqref{boundednessapriori}, via \eqref{aprioribound}, and Theorem~\ref{Teorema1} and by the following argument. We have
$$\|\sigma _2-\sigma _1\|_{L^q (B_{R_0})}\leq (2\lambda_1)^{1-\beta }\|\sigma _2-\sigma _1\|^\beta _{L^1 (B_{R_0})}$$
 where $\beta =1/q,$ hence
\begin{equation*}
    \beta = \frac{Q_1-2}{2Q_1},
\end{equation*}
and
$$\|\eta_2 -\eta _1\|_{L^2 (B_{R_0})}\leq (2\delta_1)^{1/2} \|\eta _2 - \eta _1\|_{L^1(B_{R_0})}^{1/2}.$$

About \eqref{holderfarfield}, it follows from Remark~\ref{regularityfarfield}, in particular from \eqref{boundfarfield}.
\end{proof}

For the sake of completeness, we provide a quite general admissible class that we borrow from \cite{FR23}.

\begin{prop}\label{admprop}
We have that the class $\nn$ defined in \cite[Definition~5.12]{FR23} is an admissible class in the sense of Definition~\ref{admissibleclass}.
Moreover, for such a class $\nn$ we have that the function $u_{\infty}(\sigma,\eta;k,d;\hat{x})$ is continuous with respect to the variable $k$ in $(0,k_1]$.

\end{prop}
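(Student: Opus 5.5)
The plan is to reduce the admissibility of the class $\nn$ of \cite[Definition~5.12]{FR23} to the uniform a priori estimates already proved there. That class consists of $(\sigma,\eta,k)\in\tilde{\nn}$ satisfying extra conditions, such as Lipschitz bounds on $\sigma$ for $N\geq 3$, which guarantee the unique continuation property with uniform constants, together with $k$ ranging in $(0,k_1]$.

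First I will show existence and uniqueness for \eqref{pb1}. For fixed $(\sigma,\eta,k)\in\nn$, I will use the classical Phillips/Fredholm approach. One truncates to $B_{s'}$, $s'>R_0$, using the Dirichlet-to-Neumann map of the radiating Helmholtz equation on $\partial B_{s'}$. The resulting sesquilinear form is a compact perturbation of a coercive one. Uniqueness for the homogeneous problem follows from Rellich's lemma: $\mathrm{Im}$ of the flux vanishes, so the far field is zero, hence $w\equiv0$ outside $B_{R_0}$. Then the UCP, which holds for every element of $\nn$ by \cite{FR23}, gives $w\equiv 0$. The Fredholm alternative then yields existence for any $f\in L^2$ and $F\in L^2$ supported in $B_{R_0}$.

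Next comes the uniform bound \eqref{aprioriboundadm}, which is the main obstacle: the constant must depend only on the class and on $s$, not on the individual coefficients. I will argue by contradiction and compactness. Suppose there are $(\sigma_n,\eta_n,k_n)\in\nn$ and data $f_n,F_n$ with $\|f_n\|+\|F_n\|\to0$ while $\|w_n\|_{L^2(B_s)}=1$. Caccioppoli (Lemma~\ref{caccioppoli}) and the exterior Helmholtz estimates give $H^1_{loc}$ bounds, so I extract subsequences $w_n\rightharpoonup w$ in $H^1_{loc}$ and $k_n\to k$. Coefficient convergence comes from the structure of $\nn$: $\sigma_n\to\sigma$ uniformly, by Lipschitz or BV-type compactness, and $\eta_n\to\eta$ in $L^1$, or weak-$*$, which suffices for the zero-order term. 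In both cases the limit stays in $\nn$, because $\nn$ is closed under this convergence; this is exactly the closure property that \cite{FR23} establishes. Strong $L^2_{loc}$ convergence plus the equation passes to the limit, giving a radiating solution of the homogeneous problem. This requires the radiation condition to be preserved uniformly, via the Green representation \eqref{Greenrep} on $\partial B_{s}$. Hence $w=0$, contradicting $\|w\|_{L^2(B_s)}=1$ by strong convergence. The delicate point is $k_n\to0$ when $N\geq3$, where the exterior problem degenerates. There I would rely on the low-frequency uniform estimates of \cite{FR23}, which are precisely why the definition there allows $k\in(0,k_1]$ for $N\geq3$ but requires $k\geq k_0$ for $N=2$. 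In practice I expect to cite \cite[Theorem~5.13 or the corresponding stability result]{FR23} directly, which gives exactly this bound.

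Finally, for the continuity in $k$, I fix $(\sigma,\eta)$ and let $k_n\to k\in(0,k_1]$. The difference $w=u(k_n)-u(k)$ solves \eqref{pb1} with coefficient $k_n$ and right-hand side $(k^2-k_n^2)\eta u(k)$ in $B_{R_0}$. The incident waves also differ, but I will absorb this by writing the equation for the scattered fields, whose source $-\mathrm{div}((\sigma-\mathbb{I}_N)\nabla u^i)-k^2(\eta-1)u^i$ depends smoothly on $k$. The uniform bound \eqref{aprioriboundadm} then gives $\|w\|_{L^2(B_{R_0+2})}\to0$. Remark~\ref{regularityfarfield}, via the representation \eqref{farfield} whose kernel is also continuous in $k$, then yields continuity of $u_\infty$ in $k$.
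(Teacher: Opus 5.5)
Your admissibility argument follows the paper's route: contradiction with normalized solutions, compactness of the class via \cite[Proposition~5.12]{FR23}, $W^{1,2}(B_s)$ bounds from Caccioppoli and \eqref{Greenrep}, a radiating limit, and the UCP of \cite[Proposition~5.13]{FR23}. Your Fredholm/Rellich/UCP existence argument is a reasonable addition that the paper leaves implicit. One small correction: the compactness of the class in \cite{FR23} gives $\sigma_n\to\sigma$ only almost everywhere, not uniformly (BV compactness never gives uniform convergence). This is enough, because for a fixed test function $\phi$ dominated convergence gives $\sigma_n\nabla\phi\to\sigma\nabla\phi$ strongly in $L^2$.

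The genuine gap is in the continuity in $k$. Neither $u(k_n)-u(k)$ nor $w_n=u^s(k_n)-u^s(k)$ solves a problem of the form \eqref{pb1} with wavenumber $k_n$.
\begin{itemize}
\item Besides the difference of the two compactly supported sources, the equation for $w_n$ contains the term $(k^2-k_n^2)\eta\,u^s(k)$. This term lives on all of $\R^N$ and is not even in $L^2(\R^N)$, since $|u^s(k)|$ decays only like $|x|^{-(N-1)/2}$.
\item $u^s(k)$ is not radiating with respect to $k_n$: $r^{(N-1)/2}(\partial_r u^s(k)-\rmi k_n u^s(k))$ behaves like $\rmi(k-k_n)e^{\rmi kr}u_\infty(\hat{x})$, which in general does not vanish at infinity.
\end{itemize}
So \eqref{aprioriboundadm} cannot be applied to $w_n$. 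A cutoff does not rescue it either, because it creates sources outside $B_{R_0}$ that depend on $w_n$ itself.

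The fix is to apply the uniform bound to $u^s(k_n)$ alone and then reuse your compactness argument.
\begin{itemize}
\item $u^s(k_n)$ solves \eqref{pb1} with wavenumber $k_n$, $f_n=-k_n^2(\eta-1)u^i$ and $F_n=-(\sigma-\mathbb{I}_N)\nabla u^i$, where $u^i=u^i(k_n,d;\cdot)$. Both sources vanish outside $B_{R_0}$, are bounded uniformly in $n$, and converge strongly as $k_n\to k$.
\item Hence $u^s(k_n)$ is bounded in $W^{1,2}_{loc}$. Every subsequential limit solves the problem with wavenumber $k$ and is radiating; to see this, pass to the limit in \eqref{Greenrep}, using that $\Phi_{k_n}\to\Phi_k$ together with its derivatives, locally uniformly away from the diagonal.
\item By uniqueness every such limit equals $u^s(k)$, so the whole sequence converges in $L^2_{loc}$. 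Then \eqref{farfield}, whose kernel is continuous in $k$, gives convergence of the far fields.
\end{itemize}
This is the content of \cite[Theorem~5.9]{FR23}, which the paper simply invokes.
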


\begin{proof} We just sketch the proof, since the result is essentially that of Theorem~5.14 in \cite{FR23}, which in turn is based on arguments previously developed in \cite{Men-Ron} and \cite{LRX19}.

Let $s\geq R_0+1$. 
  We argue by contradiction. Let us assume that, for any $n\in\N$ there exist $(\sigma_n,\eta_n,k_n)\in \nn_{ad}$, $f_n\in L^2(B_{R_0})$ and 
  $F_n\in L^2(B_{R_0};\R^N)$ with
  $$\|f_n\|_{L^2(B_{R_0})}+ \|F_n\|_{L^2(B_{R_0})} =1,$$
  such that, calling $w_n$ the corresponding solution to \eqref{pb1}, we have
\begin{equation}\label{new10genn}
    \|w_{n}\|_{L^2(B_{s})}\geq n.
\end{equation}

By passing to subsequences thanks to Proposition~5.12 in \cite{FR23},
without loss of generality, we can assume that, as $n\to+\infty$, $\sigma_n \to \sigma $ almost everywhere in $B_{R_0}$, $\eta _n$ converges to $\eta$ with respect to weak-$\ast$ convergence in $L^\infty (B_{R_0} $), $k_n \to k \in \R$, $f_n \rightharpoonup f$ and $F_n \rightharpoonup F$ with respect to weak convergence in $L^2(B_{R_0})$ and $L^2(B_{R_0},\R^N)$, respectively.

For any $n\in\N$, we call $a_n=\|w_{n}\|_{L^2(B_{s})}$ and we consider $v_n=w_n/a_n$, thus $\|v_n\|_{L^2(B_s)}=1$ and it satisfies
\begin{equation*}
\left\{
\begin{array}{ll}\displaystyle
\mathrm{div}(\sigma _n\nabla v_n) +k^2_n\eta_n v_n=\frac{f_n}{a_n} +\mathrm{div} \left(\frac{F_n}{a_n}\right) &\text{in } \R^N \\
\displaystyle \lim_{r\to +\infty } r^{\frac{N-1}{2}} \left(\frac{\partial v_n}{\partial r} - kiv_n\right) =0   & r=|x|.
\end{array}
\right.
\end{equation*}

Using the Caccioppoli inequality \eqref{disuguaglianza} in Lemma~\ref{caccioppoli}, we have that
\begin{equation}\label{newnablav}
    \|v_n\|_{W^{1,2}(B_{R_0+1/2})} \leq C,
\end{equation}
for some $C$ independent on $n$.
Moreover, arguing as in the proof of \eqref{boundaryest},
we have that
\begin{equation}\label{boundaryest2}
 \|v_n \|_{ L^{\infty}(\partial B_{R_0+1/4})} +\left\|\frac{\partial v_n}{\partial\nu} \right\|_{ L^{\infty}(\partial B_{R_0+1/4})} \leq C_1 ,
\end{equation}
for some $C_1$ independent on $n$. By \eqref{newnablav} and through \eqref{Greenrep} applied to $v_n$ with $R=R_0+1/4$ for $|x|>R_0+1/2$, we can find $C_2$, independent on $n$, such that
\begin{equation*}
    \|v_n\|_{W^{1,2}(B_s)} \leq C_2.
\end{equation*}

Passing to a subsequence, we can find $v$ such that, as $n\to+\infty$,
$v_n$ weakly converges to $v$ in $W^{1,2}(B_s)$, $v_n$ strongly converges to $v$ in $L^2(B_s)$ and in
$L^2(E)$ for any compact subset $E$ of $\R^N$, where $v$, essentially by Proposition~5.1 and Corollary~5.2 in \cite{FR23},
satisfies, if $k>0$,
\begin{equation*}
\left\{
\begin{array}{ll}\displaystyle
\mathrm{div}(\sigma \nabla v) +k^2\eta v=0 &\text{in } \R^N \\
\displaystyle \lim_{r\to +\infty } r^{\frac{N-1}{2}} \left(\frac{\partial v}{\partial r} - kiv\right) =0   & r=|x|.
\end{array}
\right.
\end{equation*}
Since $\mathrm{div}(\sigma\nabla v)+\eta v=0$ satisfies the UCP, see \cite[Proposition~5.13]{FR23}, we conclude that $v\equiv 0$ in $\R^N$. The same holds if $N\geq 3$ and $k=0$.
However, for any $n\in\N$, $\|v_n\|_{L^2(B_s)}=1$ and $v_n\to v$ as $n\to+\infty$ strongly in $L^2(B_s)$, hence $\|v\|_{L^2(B_s)}=1$ as well and this contradicts the fact that $v\equiv 0$ in $\R^N$.

Finally, continuity with respect to the wavenumber $k$ follows from \cite[Theorem~5.9]{FR23}.
\end{proof}

\section{The approximation result}\label{mainsec}
Throughout this section we keep fixed the following parameters $r>0$, $L>0$, $R_0>0$, $0<\lambda_0 < \lambda_1$, $0<\delta_0<\delta_1$ and
$ 0<k_0<k_1$. We call $I_N:=\{k\in \R:\ k\text{ satisfies }\eqref{condonk}\}$.

We also keep fixed an open set $\Omega\in \mathcal{A}(r,L,R_0)$. About $\Omega$ we assume that $\Omega$ is polyhedral and, when $N\geq 3$,  that each of its connected components is convex.
We consider also fixed the constant $s>0$ depending on $\Omega$ as in Proposition~\ref{discretizable}. As in Proposition~\ref{discretizable}, for any $h\in (0,1]$, we set
the regular triangulation $\mathcal{T}_h$ of $\overline{\Omega}$ with constants $s$ and $h$ and we set $Y^h=Y^{\mathcal{T}_h}_{\mathcal{M}^{N\times N}(\R)} \times Y^{\mathcal{T}_h}$ and $\Pi_h=\Pi_{\mathcal{T}_h}$.

Let $I_{w}\subset I_N$ which is either finite or measurable with respect to $\mathcal{H}^1$ and let $\mu_{w}=\mathcal{H}^0$ in the first case and $\mu_{w}= \mathcal{H}^1$ in the second. For simplicity, we keep fixed $I_{w}$, and the corresponding measure $\mu_{w}$, throughout this section.

Let $I_{in}\subset \mathbb{S}^{N-1}$ which is either finite or measurable with respect to $\mathcal{H}^{N-1}$ and let $\mu_{in}=\mathcal{H}^0$ in the first case and $\mu_{in}= \mathcal{H}^{N-1}$ in the second.
Let $I_{out}\subset \mathbb{S}^{N-1}$ which is either finite or measurable with respect to $\mathcal{H}^{N-1}$ and let $\mu_{out}=\mathcal{H}^0$ in the first case and $\mu_{out}= \mathcal{H}^{N-1}$ in the second.

In the sequel we consider the following a priori hypotheses on the coefficients. The condition is different if $N=2$ or $N\geq 3$, hence we introduce the following notation
\begin{equation}\label{regulardefnorm}|\sigma|_{\ast}=\left\{\begin{array}{ll}
|\sigma|_{BV(\Omega)}& \text{if }N=2
\\
|\sigma|_{W^{1,+\infty}(\Omega)} & \text{if }N\geq 3.
\end{array}
\right.
\end{equation}

\begin{defi}\label{ouradclass}
We set $X_{ad}$ as the set of $(\sigma,\eta)\in \tilde{X}$
such that
\begin{equation}\label{5.9FRbis}
    \sigma \equiv \mathbb{I}_N \quad\text{and} \quad \eta \equiv 1 \quad \text{in } \R^N \setminus \overline{\Omega}
\end{equation}
and $|\sigma|_{\ast}<+\infty$ and 
$|\eta|_{BV(B_{R_0})}<+\infty$.
 \end{defi}

\begin{rem}\label{admisremark}
 If we call the class $\mathcal{N}_{ad}$ as the set of $(\sigma,\eta,k)\in X_{ad}\times I_{w}$
such that $|\sigma|_{\ast}\leq L$ and $|\eta|_{BV(\Omega)}\leq L$, then $\mathcal{N}_{ad}$ is an admissible class in the sense of Definition~\ref{admissibleclass}. In fact, this is just a particular case of a class $\mathcal{N}$ of  \cite[Definition~5.12]{FR23}, for which we proved admissibility in Proposition~\ref{admprop}.
\end{rem}

Correspondingly, let us introduce the following \emph{regularization operator}.
\begin{defi} For any $(\sigma,\eta)\in X$, we set
\begin{equation}\label{regulardef}
R(\sigma,\eta)=\left\{\begin{array}{ll}|\sigma|_{\ast}+|\eta|_{BV(\Omega)}
& \text{if }(\sigma,\eta)\in X_{ad}
\\
+\infty & \text{otherwise}.
\end{array}
\right.
\end{equation}
\end{defi}

The target functional $F_0$ is the following. Let $(\sigma_0,\eta_0)$ be the unknown coefficients to be determined by our scattering measurements. We assume that $(\sigma_0,\eta_0)\in X_{ad}$. We set $I_{in}^{0}\subset\mathbb{S}^{N-1}$, with corresponding measure $\mu_{in}^{0}$, and $I_{out}^{0}\subset\mathbb{S}^{N-1}$, with corresponding measure $\mu_{out}^{0}$.
We call $I^{0}=I_{w}\times I_{in}^{0}\times I_{out}^{0}$ and $\mu^{0}$ the corresponding product measure
$\mu_{w}\times \mu_{in}^{0}\times\mu_{out}^{0}$ on $I^{0}$.

 Then, for a given constant $\tilde{a}>0$,
we define $F_0:X\to [0,+\infty]$ such that for any $(\sigma , \eta ) \in X$
\begin{equation}\label{targetfunc}
F_0 (\sigma , \eta )=\left\{\begin{array}{ll}
\tilde{a} R (\sigma , \eta ) & \text{if }u_{\infty}(\sigma,\eta;k,d;\hat{x})=u_{\infty}(\sigma_0,\eta_0;k,d;\hat{x})\\
& \quad\text{for any }k\in I_{w},\ d\in I_{in}^0,\ \hat{x}\in I_{out}^0\\
+\infty &\text{otherwise}.
\end{array}\right.
\end{equation}

We denote with $\varepsilon>0$ the \emph{noise level}. Associate to the noise level, we define $I_{in}^{\varepsilon}$ and $I_{out}^{\varepsilon}$, with corresponding measures $\mu_{in}^{\varepsilon}$ and $\mu_{out}^{\varepsilon}$, as the sets of incident and outgoing directions for which we collect scattering measurements.
We call $I^{\varepsilon}=I_{w}\times I_{in}^{\varepsilon}\times I_{out}^{\varepsilon}$ and $\mu^{\varepsilon}$ the corresponding product measure
$\mu_{w}\times \mu_{in}^{\varepsilon}\times\mu_{out}^{\varepsilon}$ on $I^{\varepsilon}$.
We assume that $I^{\varepsilon}\subset I^{0}$.

For any $(\sigma,\eta)\in \tilde{X}$, we define
$\mathcal{F}^{\varepsilon}(\sigma,\eta):I^{\varepsilon}\to \C$ as follows
$$\mathcal{F}^{\varepsilon}(\sigma,\eta)(k,d,\hat{x})=u_{\infty}(\sigma,\eta;k,d;\hat{x})\quad\text{for any }(k,d,\hat{x})\in I^{\varepsilon}.$$
Analogously, 
$\mathcal{F}^{0}(\sigma,\eta):I^{0}\to \C$ is defined as follows
$$\mathcal{F}^{0}(\sigma,\eta)(k,d,\hat{x})=u_{\infty}(\sigma,\eta;k,d;\hat{x})\quad\text{for any }(k,d,\hat{x})\in I^{0}.$$

We
measure $u_{\infty}(\sigma_0,\eta_0;k,d;\hat{x})$ for any $k\in I_{w}$, any $d\in I_{in}^{\varepsilon}$ and any $\hat{x}\in I_{out}^{\varepsilon}$,
that is, the exact measurements would be given by $\mathcal{F}^{\varepsilon}(\sigma_0,\eta_0)$. 
The available noisy measurements are denoted by $\mathcal{F}^{\varepsilon}_{meas}$, which we assume to be measurable with respect to $\mu^{\varepsilon}$, and the noise is defined as follows
\begin{equation}
|\mathcal{F}^{\varepsilon}_{meas}(k,d,\hat{x})-\mathcal{F}^{\varepsilon}(\sigma_0,\eta_0)(k,d,\hat{x})|\leq \varepsilon\quad\text{for any }(k,d,\hat{x})\in I^{\varepsilon}.
\end{equation}

For any $0<\varepsilon\leq 1$ we also associate $h=h(\varepsilon)\in (0,1]$ and $a=a(\varepsilon)>0$.
For simplicity we assume that $a(\varepsilon)=\tilde{a}\varepsilon^{\gamma}$ for some positive $\gamma$ to be chosen later.
We define the functional
$F_\varepsilon:X\to [0,+\infty]$ such that for any $(\sigma , \eta ) \in X$
\begin{equation}\label{approxfunc}
F_\varepsilon (\sigma , \eta )=\left\{\begin{array}{ll}
\dfrac{\|\mathcal{F}^{\varepsilon}(\sigma,\eta)-\mathcal{F}_{meas}^{\varepsilon}\|^2_{L^2(I^{\varepsilon})}}{\varepsilon^{\gamma}}+\tilde{a} R (\sigma , \eta ) & \text{if }(\sigma,\eta)\in X_{ad}\cap Y^{h(\varepsilon)}  \\
+\infty &\text{otherwise}.
\end{array}\right.
\end{equation}
Clearly, the $L^2$-norm on $I^{\varepsilon}$ is with respect to the measure $\mu^{\varepsilon}$.

We note that, through a classical application of the direct method, it is not difficult to show that $F_0$ and $F_{\varepsilon}$, for any $\varepsilon>0$, admit a minimum over $X$.

Given $F_0$,
our aim is to construct a suitable functional $F_{\varepsilon}$, for any $0<\varepsilon\leq 1$, such that
$\{F_{\varepsilon}\}_{0<\varepsilon\leq 1}$ is equicoercive, that is, there exists a compact $K\subset X$ such that
$\inf_K F_{\varepsilon}=\inf_XF_{\varepsilon}$ for any $0<\varepsilon\leq 1$, and that
$F_0 = \Gamma $-$\displaystyle \lim_{\varepsilon\to 0^+}F_{\varepsilon}$.
We need to choose carefully $I_{in}^{\varepsilon}$ and $I_{out}^{\varepsilon}$, the discretization parameter $a(\varepsilon)$, in our case the parameter $\gamma>0$, and the discretization parameter $h(\varepsilon)$. In particular, we would like both $I_{in}^{\varepsilon}$ and $I_{out}^{\varepsilon}$ to be discrete
for two important reasons.
First, this is the kind of measurements one can collect in practice and, second, with this choice the functional $F_\varepsilon$ becomes fully discrete.

We define  the \emph{discreteness parameter} of our measurements set $I^{\varepsilon}$ as
$$D(\varepsilon):=\mu_{\varepsilon}(I^{\varepsilon}).$$
Calling $m^{\varepsilon}_{in}:=\mu^{\varepsilon}_{in}(I^{\varepsilon}_{in})$ and $m^{\varepsilon}_{out}:=\mu^{\varepsilon}_{out}(I^{\varepsilon}_{out})$, 
we consider the following normalizations
 $\tilde{\mu}^{\varepsilon}_{in}:=\mu^{\varepsilon}_{in}/m^{\varepsilon}_{in}$ and
$\tilde{\mu}^{\varepsilon}_{out}:=\mu^{\varepsilon}_{out}/m^{\varepsilon}_{out}$.

We first analyze the $\Gamma$-$\liminf$ inequality, then we pass to the recovery sequence and equicoerciveness.

Let $\{\varepsilon_n\}_{n\in\N}$ be a sequence of positive numbers such that $\displaystyle \lim_n\varepsilon_n=0$.
We call $F_n=F_{\varepsilon_n}$, $I^n=I^{\varepsilon_n}$, $\mathcal{F}^n=\mathcal{F}^{\varepsilon_n}$ and so on for all the other terms involved. Let $(\sigma_n,\eta_n)\to (\sigma,\eta)$ in $X$.
Without loss of generality, we assume that $\displaystyle \liminf_nF_n(\sigma_n,\eta_n)<+\infty$. Then, by Lemma~\ref{propLip},
$$\tilde{a}R(\sigma,\eta)\leq \liminf_{n}\tilde{a}R(\sigma_n,\eta_n)\leq 
\liminf_{n}F_n(\sigma_n,\eta_n).$$
The $\Gamma$-$\liminf$ inequality follows if we show that $\mathcal{F}^0(\sigma,\eta)=\mathcal{F}^0(\sigma_0,\eta_0)$.
Without loss of generality and up to subsequences, we assume that, for some constant $C_M$,
$$F_n(\sigma_n,\eta_n)\leq C_M\quad\text{for any }n\in \N.$$
Then we have that
\begin{multline*}
\|\mathcal{F}^n(\sigma,\eta)-\mathcal{F}^n(\sigma_0,\eta_0)\|_{L^2(I^n)}\leq
\|\mathcal{F}^n(\sigma,\eta)-\mathcal{F}^n(\sigma_n,\eta_n)\|_{L^2(I^n)}\\+\|\mathcal{F}^n(\sigma_n,\eta_n)-\mathcal{F}_{meas}^n\|_{L^2(I^n)} +
\|\mathcal{F}_{meas}^n-\mathcal{F}^n(\sigma_0,\eta_0)\|_{L^2(I^n)}\\
\leq C\|(\sigma_n,\eta_n)-(\sigma,\eta)\|_X^{\beta_1}\sqrt{D(\varepsilon_n)}+\sqrt{C_M}\varepsilon_n^{\gamma/2}+\sqrt{D(\varepsilon_n)}\varepsilon_n
\end{multline*}
for some constants $C$ and $\beta_1>0$. For the estimate on the first term, we used Theorem~\ref{holdercontthm}, in particular \eqref{holderfarfield}, and Remark~\ref{admisremark}. 

More precisely,
\begin{multline*}
\int_{I^n}|u_{\infty}(\sigma,\eta;k,d;\hat{x})-u_{\infty}(\sigma_0,\eta_0;k,d;\hat{x})|^2\,d\mu_{w}(k)\,d\tilde{\mu}^n_{in}(d)\,d\tilde{\mu}^n_{out}(\hat{x})\\\leq 
3C^2\|(\sigma_n,\eta_n)-(\sigma,\eta)\|_X^{2\beta_1}\mu_{w}(I_w)+3C_M\frac{\varepsilon_n^{\gamma}}{m^n_{in}m^n_{out}}+3\mu_{w}(I_{w})\varepsilon^2_n.
\end{multline*}
We conclude that, provided $m^n_{in}\geq m$ and $m^n_{out}\geq m$ for some constant $m>0$,
\begin{equation}\label{crucialobserva}
\lim_n\int_{I^n}|u_{\infty}(\sigma,\eta;k,d;\hat{x})-u_{\infty}(\sigma_0,\eta_0;k,d;\hat{x})|^2\,d\mu_{w}(k)\,d\tilde{\mu}^n_{in}(d)\,d\tilde{\mu}^n_{out}(\hat{x})=0.
\end{equation}

The next lemma is a crucial step towards the $\Gamma$-$\liminf$ inequality.

\begin{lem}\label{Gammaliminiflemma}
Under the previous notation and assumptions, let us assume that there exists a constant $m>0$ such that, for any $\varepsilon$, $0<\varepsilon\leq 1$ we have
$m^{\varepsilon}_{in}\geq m$ and $m^{\varepsilon}_{out}\geq m$. The measures $\tilde{\mu}^{\varepsilon}_{in}$ and $\tilde{\mu}^{\varepsilon}_{out}$ are considered to be
measures on $\mathbb{S}^{N-1}$ by extending them to $0$ outside $I^{\varepsilon}_{in}$ and $I^{\varepsilon}_{out}$, respectively.

If, as $\varepsilon\to 0^+$, $\tilde{\mu}^{\varepsilon}_{in}\rightharpoonup \tilde{\mu}^0_{in}$ and 
$\tilde{\mu}^{\varepsilon}_{out}\rightharpoonup \tilde{\mu}^0_{out}$ weakly in the sense of measures on $\mathbb{S}^{N-1}$, then
$$ u_{\infty}(\sigma,\eta;k,d;\hat{x})=u_{\infty}(\sigma_0,\eta_0;k,d;\hat{x})\quad\text{for }\tilde{\mu}_0\text{-a.e. }(k,d,\hat{x})\in I_{w}\times \mathbb{S}^{N-1}\times \mathbb{S}^{N-1}$$
where $\tilde{\mu}_0=\mu_{w}\times\tilde{\mu}^0_{in}\times \tilde{\mu}^0_{out}$.
\end{lem}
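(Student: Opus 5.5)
Starting from \eqref{crucialobserva}, we want to pass to the limit in the measures. We set
$$g(k,d,\hat{x})=|u_{\infty}(\sigma,\eta;k,d;\hat{x})-u_{\infty}(\sigma_0,\eta_0;k,d;\hat{x})|^2 .$$
The first step is to show that $g$ is continuous in $(d,\hat{x})\in\mathbb{S}^{N-1}\times\mathbb{S}^{N-1}$, uniformly with respect to $k\in I_w$, and bounded on $I_w\times\mathbb{S}^{N-1}\times\mathbb{S}^{N-1}$. Both $(\sigma,\eta)$ and $(\sigma_0,\eta_0)$ lie in $X_{ad}$. Indeed, $(\sigma,\eta)\in X_{ad}$ follows from the lower semicontinuity of $R$ (Lemma~\ref{propLip} and the $BV$ analogue) together with closedness of the pointwise bounds under $L^1$ convergence. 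Hence both triples belong to an admissible class of the type in Remark~\ref{admisremark}, with $L$ replaced by the maximum of the two regularization values.

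In $\hat{x}$, continuity follows from the representation \eqref{farfield}: the integrand $e^{-ik\hat{x}\cdot y}$ is Lipschitz in $\hat{x}$ uniformly for $k\leq k_1$ and $|y|=R_0+1$. Combined with \eqref{boundaryest} and the uniform bound \eqref{aprioribound}, this gives a uniform Lipschitz bound in $\hat{x}$. In $d$, we use the reciprocity relation \eqref{recrelation} to swap $d$ and $\hat{x}$, so the same argument yields uniform Lipschitz continuity in $d$. Boundedness of $g$ comes from \eqref{boundfarfield} and \eqref{aprioribound}. If $I_w$ is infinite we do not need continuity in $k$, only measurability, which follows from Proposition~\ref{admprop}.

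The second step is to define
$$G_\varepsilon(k)=\int g(k,d,\hat{x})\,d\tilde{\mu}^\varepsilon_{in}(d)\,d\tilde{\mu}^\varepsilon_{out}(\hat{x}).$$
Since $\tilde{\mu}^\varepsilon_{in}\times\tilde{\mu}^\varepsilon_{out}$ are probability measures converging weakly to the product $\tilde{\mu}^0_{in}\times\tilde{\mu}^0_{out}$ (weak convergence of products of weakly converging probability measures on compact spaces, via Stone--Weierstrass density of sums of products of continuous functions), and $g(k,\cdot,\cdot)$ is continuous, we get $G_\varepsilon(k)\to G_0(k)$ for every $k$. Moreover $0\leq G_\varepsilon\leq C$ and $\mu_w(I_w)<\infty$, so dominated convergence gives $\int_{I_w} G_{\varepsilon_n}\,d\mu_w\to\int_{I_w}G_0\,d\mu_w$. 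By \eqref{crucialobserva}, the left-hand side tends to $0$. Since $g\geq0$, it follows that $g=0$ $\tilde{\mu}_0$-a.e., which is the claim.

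The main obstacle is the first step: checking that the modulus of continuity of $g$ in $(d,\hat{x})$ is uniform in $k$, and that $(\sigma,\eta)$ genuinely falls in an admissible class so that the constants in Remark~\ref{regularityfarfield} and \eqref{aprioribound} apply. This requires the lower bound $k\geq k_0$ when $N=2$, which is guaranteed because $I_w\subset I_N$. The dependence on $d$ through $u^i$ could also be treated directly, since $u^i$ is smooth in $d$ and \eqref{aprioriboundadm} controls the scattered field differences. Reciprocity is just the shortest route.
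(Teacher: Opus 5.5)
Your argument is correct and follows the same route as the paper, whose proof simply passes to the limit in \eqref{crucialobserva} to obtain $\int g\,d\tilde{\mu}_0=0$ and concludes since $g\geq 0$. What you add (the Lipschitz bound in $(d,\hat{x})$, uniform in $k$, via \eqref{farfield}, \eqref{boundfarfield} and reciprocity, the weak convergence of the product measures, and dominated convergence in $k$) is exactly the justification of that limit, which the paper leaves implicit.
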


\begin{proof}
By \eqref{crucialobserva} we infer that
$$\int_{I_{w}\times \mathbb{S}^{N-1}\times\mathbb{S}^{N-1}}\!\!\!\!|u_{\infty}(\sigma,\eta;k,d;\hat{x})-u_{\infty}(\sigma_0,\eta_0;k,d;\hat{x})|^2\,d\mu_{w}(k)\,d\tilde{\mu}^0_{in}(d)\,d\tilde{\mu}^0_{out}(\hat{x})=0$$
and the proof is concluded.
\end{proof}

We now turn our attention to the recovery sequence.

\begin{lem}\label{Gammalimsuplemma}
Let $(\sigma,\eta)$ be such that $R(\sigma,\eta)<+\infty$. We fix $0<\alpha<1/2$.
Then, there exist constants $C_1$ and $C_2$ such that for any $0<h\leq 1$, we can find $(\sigma_h,\eta_h)\in X_{ad}\cap Y^{h}$ such that
$$\|(\sigma_h,\eta_h)-(\sigma,\eta)\|_X\leq C_1(1+R(\sigma,\eta))h^{\alpha}\quad\text{and}\quad R(\sigma_h,\eta_h)\leq C_2(1+R(\sigma,\eta))$$
and
$$R(\sigma_h,\eta_h)\to  R(\sigma,\eta)\quad\text{as }h\to 0^+.$$
Here the constants $C_1$ and $C_2$ depends on $r$, $L$, $R$, $s$, $\lambda_0$, $\lambda_1$, $\delta_0$ and $\delta_1$ only.

Hence,
for every $(k,d,\hat{x})\in I^0$ we have
\begin{equation}\label{errorestimate}
|u_{\infty}(\sigma_h,\eta_h,k,d,\hat{x})-u_{\infty}(\sigma,\eta,k,d,\hat{x})|\leq \tilde{C}_1 \sqrt{C_1(1+R(\sigma,\eta))}h^{\alpha/2}.
\end{equation}
\end{lem}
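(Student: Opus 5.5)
The plan is to build $(\sigma_h,\eta_h)$ by mollifying and then interpolating, with the mollification scale tied to the mesh size. We treat $\eta$ and $\sigma$ separately and then combine the estimates.

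\emph{Extension and mollification.} Since the connected components of $\Omega$ have pairwise disjoint closures, we may work on each component separately, so assume $\Omega$ is a polyhedral domain. For $\sigma$ when $N\geq 3$, each component is convex, so $\mathrm{Lip}(\sigma,\Omega)=|\sigma|_{W^{1,\infty}(\Omega)}$. Theorem~\ref{extensionthm} then gives an extension $\tilde\sigma$ to $\R^N$ with values in $\mathcal{M}(\lambda_0,\lambda_1)$ and the same Lipschitz constant. For $\eta$, and for $\sigma$ when $N=2$, we use a standard BV extension to a neighbourhood of $\overline\Omega$ (for instance by reflection across the Lipschitz boundary), truncated to $[\delta_0,\delta_1]$ or projected onto $\mathcal{M}(\lambda_0,\lambda_1)$. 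This controls the total variation by $C(1+R)$. We then mollify at scale $\rho$: $\sigma^\rho=\tilde\sigma*\varphi_\rho$ and $\eta^\rho=\tilde\eta*\varphi_\rho$. By convexity the bounds are preserved, and we have:
\begin{itemize}
\item $\|\eta^\rho-\eta\|_{L^1(\Omega)}\leq C\rho(1+R)$, with $|\eta^\rho|_{BV(\Omega)}\to|\eta|_{BV(\Omega)}$ (strict convergence, since $|D\tilde\eta|(\partial\Omega)=0$ can be arranged by the interior approximation argument);
\item $\|D\eta^\rho\|_{L^\infty}\leq C\rho^{-N}(1+R)$ and $\|D^2\eta^\rho\|_{L^\infty}\leq C\rho^{-N-1}(1+R)$;
\item for the Lipschitz $\sigma$: $|\sigma^\rho|_{W^{1,\infty}}\leq|\sigma|_{W^{1,\infty}(\Omega)}$ and $\|D^2\sigma^\rho\|_\infty\leq C\rho^{-1}R$.
\end{itemize}

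\emph{Interpolation.} Set $\sigma_h=\Pi_h\sigma^\rho$ and $\eta_h=\Pi_h\eta^\rho$ in $\Omega$, extended by $\mathbb{I}_N$ and $1$ outside. The bounds are preserved, as noted in Section~\ref{finiteelsec}. One issue needs care: $(\sigma_h,\eta_h)$ must agree with $(\mathbb{I}_N,1)$ near $\partial\Omega$ only if continuity across $\partial\Omega$ were required. It is not: $X_{ad}$ only requires membership in $\tilde X$ and finite seminorms on $\Omega$. However $|\eta|_{BV(\Omega)}$ is taken on $\Omega$, whereas the definition of $X_{ad}$ uses $BV(B_{R_0})$, which includes the jump on $\partial\Omega$. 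That jump is bounded by $C\,\mathcal{H}^{N-1}(\partial\Omega)$ and is finite, which suffices for membership.

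For $\sigma$ with $N\geq3$, \eqref{Lipestimateinter} gives
$$|\sigma_h|_{W^{1,\infty}}\leq|\sigma|_{W^{1,\infty}}+Ch\rho^{-1}R,$$
and $\|\sigma_h-\sigma^\rho\|_{L^\infty}\leq Ch^2\rho^{-1}R$. For the BV components we use \eqref{Ciarletest} with $q$ large, or directly the elementwise bound $\|\nabla(\Pi_hv-v)\|_{L^1}\leq Ch\|D^2v\|_{L^1}$, to get
$$|\eta_h|_{BV(\Omega)}\leq|\eta^\rho|_{BV}+Ch\rho^{-1}(1+R)\quad\text{and}\quad\|\eta_h-\eta^\rho\|_{L^1}\leq Ch^2\rho^{-1}(1+R),$$
using $\|D^2\eta^\rho\|_{L^1}\leq C\rho^{-1}|D\tilde\eta|$.

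\emph{Choice of $\rho$.} Take $\rho=h^{1/2}$, or more generally $\rho=h^{\alpha'}$ with $\alpha\leq\alpha'<1$. The error terms in the seminorms are then $Ch^{1/2}(1+R)\to0$, which gives $R(\sigma_h,\eta_h)\leq C_2(1+R)$ and $R(\sigma_h,\eta_h)\to R(\sigma,\eta)$. The $L^1$ distance in $X$ is $\leq C(\rho+h^2\rho^{-1})(1+R)\leq C_1(1+R)h^{\alpha}$. The restriction $\alpha<1/2$ leaves room to absorb constants and the boundary-layer contributions near $\partial\Omega$: one can use an inner shrink of width $h^{\alpha}$, whose error is $O(h^\alpha)$ by a BV trace estimate.

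Finally, \eqref{errorestimate} follows from \eqref{holderfarfield} together with Remark~\ref{admisremark}. Both $(\sigma_h,\eta_h)$ and $(\sigma,\eta)$ lie in an admissible class with $L$ replaced by $C_2(1+R)$, and the exponents satisfy $1/2\geq\beta$, so for $h$ small the $L^1$ estimate gives the bound with $h^{\alpha/2}$. Strictly, the $\sigma$ term carries exponent $\beta$; we absorb this either by a suitable choice or by noting that for $N\geq3$ the $\sigma$ difference is controlled in $L^\infty$.

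The main obstacle is the strict convergence $R(\sigma_h,\eta_h)\to R(\sigma,\eta)$. For $\sigma\in W^{1,\infty}$ this relies on convexity, which preserves the seminorm under extension and mollification, and on \eqref{Lipestimateinter}. For the BV parts it relies on controlling the variation added by the extension near $\partial\Omega$; there I would first mollify only an interior-shifted version, using a local dilation toward interior points in each Lipschitz chart, so that no variation is created on $\partial\Omega$.
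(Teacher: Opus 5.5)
For $\sigma$ with $N\geq 3$ you do exactly what the paper does: extension via Theorem~\ref{extensionthm} using convexity, mollification at scale $h^{1/2}$, then \eqref{Lipestimateinter}. For the BV components the paper simply invokes \cite[Proposition~4.1]{FeR}. You instead build the approximation by hand: trace-matching extension, truncation, mollification at scale $\rho=h^{1/2}$, interpolation. That route is viable and even gives an $L^1$ rate $h^{1/2}$. However, its key interpolation step is justified incorrectly when $N\geq 3$, which is exactly the case you must handle for $\eta$.

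\textbf{Why the cited bounds fail for $N\geq 3$.} The bound $\|\nabla(\Pi_h v-v)\|_{L^1}\leq Ch\|D^2v\|_{L^1}$ holds only for $N\leq 2$. For $N\geq 3$, $W^{2,1}$ does not control point values. A bump $v(x)=\psi(x/\epsilon)$ centred at a mesh vertex has $\|D^2v\|_{L^1}\leq C\epsilon^{N-2}\to 0$, while $\|\nabla \Pi_h v\|_{L^1}$ stays of order $h^{N-1}$. The fallback through \eqref{Ciarletest} fails too. There $\|D^2\eta^\rho\|_{L^q}\leq C\rho^{-1-N(1-1/q)}(1+R)$, so for $q>N/2$ and $\rho=h^{1/2}$ the bound $h\|D^2\eta^\rho\|_{L^q}$ is of order $h^{e}$ with $e<(3-N)/2\leq 0$. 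It does not tend to zero.

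\textbf{How to repair it.} Your two bounds $|\eta_h|_{BV(\Omega)}\leq |\eta^\rho|_{BV(\Omega)}+Ch\rho^{-1}(1+R)$ and $\|\eta_h-\eta^\rho\|_{L^1}\leq Ch^2\rho^{-1}(1+R)$ are nevertheless true, but they need a localized argument. On each element $K$, use the $L^\infty$ estimates $\|\nabla(\Pi_h v-v)\|_{L^\infty(K)}\leq C(s)h\|D^2v\|_{L^\infty(K)}$ and $\|\Pi_h v-v\|_{L^\infty(K)}\leq Ch^2\|D^2v\|_{L^\infty(K)}$, as in the proof of Theorem~\ref{Ciarletestimteo}. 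Combine them with $|D^2\eta^\rho(x)|\leq C\rho^{-N-1}|D\tilde\eta|(B_\rho(x))$. Since the elements have disjoint interiors, $\sum_K|K|\,|D\tilde\eta|(K+B_\rho)\leq C(\rho+h)^N|D\tilde\eta|(\R^N)$. Hence for $h\leq\rho$ one gets $\sum_K|K|\sup_K|D^2\eta^\rho|\leq C\rho^{-1}(1+R)$, which yields both bounds. With this repair your BV construction works in every dimension.

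\textbf{Two smaller points.} First, the convergence $R(\sigma_h,\eta_h)\to R(\sigma,\eta)$ also needs lower semicontinuity for the $\liminf$ (Lemma~\ref{propLip}, and that of the total variation). Your estimates only bound the $\limsup$. Second, in \eqref{errorestimate}, your $L^\infty$ control of $\sigma-\sigma_h$ is the right way to get $h^{\alpha/2}$ from the $\sigma$-term when $N\geq 3$; the paper's bare appeal to \eqref{holderfarfield} would give only $h^{\alpha\beta}$. For $N=2$, however, no choice absorbs $\beta$. Both your argument and the paper's yield only $h^{\alpha\beta}$ for that term. This is a defect of the statement, and it merely changes the admissible choice of $h(\varepsilon)$ later.
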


\begin{proof} About $\eta_h$, $0<h\leq 1$, we choose the one constructed in \cite[Proposition~4.1]{FeR}. When $N=2$, again we use \cite[Proposition~4.1]{FeR}
to construct $\sigma_h$.

When $N\geq 3$, we need to modify the argument for the construction of $\sigma_h$ as follows. Let us consider the different connected components of $\Omega$. By the regularity of $\Omega$, there exists a constant $r_0>0$, depending on $r$, $L$ and $R_0$ only, such that the distance of any two different connected components of $\Omega$ is greater than or equal to $r_0$. Therefore we can argue connected component by connected component. In other words, we can assume, without loss of generality, that $\Omega$ has only one connected component. We recall that we assume that any connected component of $\Omega$ is convex. By Theorem~\ref{extensionthm} we can assume that $\sigma$ is a Lipschitz function on $\R^N$ with values in $\mathcal{M}(\lambda_0,\lambda_1)$ and that $|\sigma|_{W^{1,\infty}(\Omega)}=\mathrm{Lip}(\sigma,\Omega)=\mathrm{Lip}(\sigma,\R^N)=|\sigma|_{W^{1,\infty}(\R^N)}$.

 Let $\xi$ be a fixed positive symmetric mollifier, that is, $\xi \in C^{\infty}_0(B_1(0))$,  $\xi \geq 0$, $\int _{B_1(0)} \xi =1$ and such that $\xi (x)$ depends only on $|x|$ for any $x\in B_1(0)$. For any $\delta >0$, we call 
    \begin{equation*}
        \xi _\delta (x) = \delta ^{-N} \xi (x/\delta ), \quad x\in \R ^N,
    \end{equation*}
and, for any $\delta$, $0<\delta\leq 1$,
\begin{equation*}
        \sigma_\delta =\xi_\delta \ast \sigma,
    \end{equation*}
 where as usual $\ast$ denotes the convolution which is applied entry by entry.
We have that $\sigma_{\delta}(x)\in \mathcal{M}(\lambda_0,\lambda_1)$ for any $x\in \R^N$. Moreover, for an absolute constant $C$,
$$|\sigma_{\delta}|_{W^{1,\infty}(\R^N)}\leq |\sigma|_{W^{1,\infty}(\Omega)}\quad\text{and}\quad \|D^2 \sigma_\delta\|_{L^{\infty}(\R^N)} \leq C|\sigma|_{W^{1,\infty}(\Omega)}\delta ^{-1},$$
and
$$\|\sigma-\sigma_{\delta}\|_{L^{\infty}(\R^N)}\leq C|\sigma|_{W^{1,\infty}(\R^N)}\delta.$$

For fixed $h$, $0<h\leq 1$, we consider $\Pi_h(\sigma_{\delta})$. By Theorem~\ref{Ciarletestimteo}, in particular by \eqref{Lipestimateinter}, we have that
\begin{equation}\label{firstest}
|\Pi_h(\sigma_{\delta})|_{W^{1,\infty}(\Omega)}\leq |\sigma|_{W^{1,\infty}(\Omega)}(1+C_1h\delta^{-1}),
\end{equation}
where $C_1$ depends on $s$ only. Hence,
$$\|\sigma_{\delta}-\Pi_h(\sigma_{\delta})\|_{L^{\infty}(\Omega)}\leq |\sigma|_{W^{1,\infty}(\Omega)}(2+C_1h\delta^{-1}  )    h$$
and
$$\|\sigma-\Pi_h(\sigma_{\delta})\|_{L^{\infty}(\Omega)}\leq |\sigma|_{W^{1,\infty}(\Omega)}[(2+C_1h\delta^{-1}  )h+C\delta].$$
Let us now choose $\delta=\delta(h)=h^{1/2}$ and call $\tilde{\sigma}_h=\Pi_h(\sigma_{h^{1/2}})$. Then, for some $C_2$ depending on $s$ only,
$$\|\sigma-\tilde{\sigma}_h\|_{L^{\infty}(\Omega)}\leq C_2  |\sigma|_{W^{1,\infty}(\Omega)}h^{1/2},$$
and
$$|\tilde{\sigma}_h|_{W^{1,\infty}(\Omega)}\leq C_2|\sigma|_{W^{1,\infty}(\Omega)}.$$
Finally, by Lemma~\ref{propLip} and by \eqref{firstest}, we immediately conclude that
$$|\sigma|_{W^{1,\infty}(\Omega)}=\lim_{h\to 0^+}|\sigma_h|_{W^{1,\infty}(\Omega)}.$$

Finally, in both cases, \eqref{errorestimate} easily follows by \eqref{holderfarfield}.
\end{proof}

\begin{rem}
If $N\geq 3$ and we assume $\eta\equiv 1$, then we can improve the order of convergence of \eqref{errorestimate} from $h^{\alpha/2}$, with $0<\alpha<1/2$, to $h^{1/2}$.
\end{rem}

The following recovery sequence and equicoerciveness result 
is an immediate consequence of Lemma~\ref{Gammalimsuplemma}.
\begin{prop}\label{recoveryprop}
We fix $0<\alpha<1/2$ and we pick $h=h(\varepsilon)=\varepsilon^{2/\alpha}$ for any $0<\varepsilon\leq 1$. We assume that there
exists $\gamma>0$ such that $D(\varepsilon)\varepsilon^{2-\gamma}\to 0$ as $\varepsilon\to 0^+$.

Then  for any $(\sigma,\eta)$ such that $F_0(\sigma,\eta)<+\infty$, calling $(\sigma_{\varepsilon},\eta_{\varepsilon})=(\sigma_{h(\varepsilon)},\eta_{h(\varepsilon)})$, we have that
$$\lim_{\varepsilon\to 0^+}(\sigma_{\varepsilon},\eta_{\varepsilon})=(\sigma,\eta) \quad\text{in }X$$
and
\begin{equation}\label{recovery}
\lim_{\varepsilon\to 0^+}F_{\varepsilon}(\sigma_{\varepsilon},\eta_{\varepsilon})=F_0(\sigma,\eta).
\end{equation}

Moreover, there exists $K$, a  compact subset of $X$, such that $\inf_{X}F_{\varepsilon}=\inf_KF_{\varepsilon}$.
\end{prop}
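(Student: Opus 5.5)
The plan is to feed the recovery sequence of Lemma~\ref{Gammalimsuplemma} into the definition \eqref{approxfunc} of $F_\varepsilon$. Then we split the data misfit into a discretization error and a noise error, and check that both vanish after division by $\varepsilon^{\gamma}$.

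\textbf{Convergence in $X$.} Fix $(\sigma,\eta)$ with $F_0(\sigma,\eta)<+\infty$. Then $R(\sigma,\eta)<+\infty$, so $(\sigma,\eta)\in X_{ad}$, and $\mathcal{F}^0(\sigma,\eta)=\mathcal{F}^0(\sigma_0,\eta_0)$ on $I^0$. By Lemma~\ref{Gammalimsuplemma} with $h=\varepsilon^{2/\alpha}$ we get $(\sigma_\varepsilon,\eta_\varepsilon)\in X_{ad}\cap Y^{h(\varepsilon)}$ and
\[
\|(\sigma_\varepsilon,\eta_\varepsilon)-(\sigma,\eta)\|_X\leq C_1(1+R(\sigma,\eta))\varepsilon^{2},
\]
which gives convergence in $X$. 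The same lemma gives $R(\sigma_\varepsilon,\eta_\varepsilon)\to R(\sigma,\eta)$.

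\textbf{Data misfit.} Since $I^\varepsilon\subset I^0$, the exact data of $(\sigma,\eta)$ and $(\sigma_0,\eta_0)$ coincide on $I^\varepsilon$. By \eqref{errorestimate} and the noise bound, pointwise on $I^\varepsilon$ we have
\[
|\mathcal{F}^\varepsilon(\sigma_\varepsilon,\eta_\varepsilon)-\mathcal{F}^\varepsilon_{meas}|\leq \tilde C_1\sqrt{C_1(1+R)}\,h^{\alpha/2}+\varepsilon=C\varepsilon+\varepsilon .
\]
Hence
\[
\|\mathcal{F}^\varepsilon(\sigma_\varepsilon,\eta_\varepsilon)-\mathcal{F}^\varepsilon_{meas}\|^2_{L^2(I^\varepsilon)}\leq C'D(\varepsilon)\varepsilon^2 .
\]
Dividing by $\varepsilon^\gamma$ leaves $C'D(\varepsilon)\varepsilon^{2-\gamma}\to0$ by hypothesis. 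This proves \eqref{recovery}.

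\textbf{Equicoercivity.} We need one fixed compact set for all $\varepsilon$.
\begin{itemize}
\item[(i)] Take any $(\sigma^*,\eta^*)$ with $F_0(\sigma^*,\eta^*)<+\infty$, for instance $(\sigma_0,\eta_0)$. By the previous step, $\inf_X F_\varepsilon\leq F_\varepsilon(\sigma^*_\varepsilon,\eta^*_\varepsilon)\leq M$ for a constant $M$ uniform in $0<\varepsilon\leq1$.
\item[(ii)] Uniformity for all $\varepsilon\in(0,1]$, not just small $\varepsilon$, follows from the explicit bound $R(\sigma_h,\eta_h)\leq C_2(1+R)$ and from $\sup_{0<\varepsilon\le1}D(\varepsilon)\varepsilon^{2-\gamma}<\infty$.
\item[(iii)] I will take this boundedness of $D(\varepsilon)\varepsilon^{2-\gamma}$ on all of $(0,1]$ as implicit in the hypothesis. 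If it is not, I would restrict to $\varepsilon$ small or enlarge the constant, since $F_\varepsilon$ always has a minimizer.
\item[(iv)] Set $K=\{(\sigma,\eta)\in X_{ad}: \tilde aR(\sigma,\eta)\leq M\}$. Every minimizer of $F_\varepsilon$ has $\tilde aR\leq F_\varepsilon\leq M$, so it lies in $K$, and $\inf_X F_\varepsilon=\inf_K F_\varepsilon$.
\end{itemize}

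\textbf{Compactness of $K$.} This is the main point to check. Bounds $\delta_0\le\eta\le\delta_1$ together with $|\eta|_{BV(\Omega)}\leq M/\tilde a$ give precompactness in $L^1$, by BV compactness on the Lipschitz bounded set $\Omega$, with $\eta\equiv1$ outside $\Omega$. The same argument applies to $\sigma$ when $N=2$. For $N\ge3$, the uniform $L^\infty$ and $W^{1,\infty}$ bounds and Ascoli--Arzel\`a on each convex component give compactness.

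Closedness of $K$ in $X$ needs three facts:
\begin{itemize}
\item lower semicontinuity of the total variation in $L^1$, together with Lemma~\ref{propLip} for the $W^{1,\infty}$ seminorm;
\item closedness of the pointwise constraints \eqref{5.1FR} and \eqref{boundseta}, since $\mathcal{M}(\lambda_0,\lambda_1)$ and $[\delta_0,\delta_1]$ are closed and convex and hence preserved under a.e.\ convergence along subsequences;
\item the condition \eqref{5.9FRbis} outside $\Omega$, which is obviously preserved.
\end{itemize}
Thus $K$ is compact in $X$, which completes the plan.
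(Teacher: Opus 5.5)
Your proof is correct and follows the paper's argument essentially verbatim. The recovery sequence comes from Lemma~\ref{Gammalimsuplemma} with $h^{\alpha/2}=\varepsilon$, the pointwise misfit is bounded by $C\varepsilon+\varepsilon$ so the scaled data term is $O(D(\varepsilon)\varepsilon^{2-\gamma})\to 0$, and $K$ is the sublevel set of $\tilde aR$ at the level bounding $F_\varepsilon$ along the recovery sequence for $(\sigma_0,\eta_0)$. Your additional verification that $K$ is compact (BV compactness, Ascoli--Arzel\`a and Lemma~\ref{propLip}) and your remark on uniformity over all of $(0,1]$ only make explicit what the paper leaves implicit; in that remark, restricting to small $\varepsilon$ is the fallback that actually works, whereas enlarging the constant alone would not.
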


\begin{proof}
We fix $0<\alpha<1/2$. Then for any $\varepsilon>0$ and $0<h=h(\varepsilon)\leq 1$, we have
\begin{multline}
|\mathcal{F}^{\varepsilon}(\sigma_{h(\varepsilon)},\eta_{h(\varepsilon)})(k,d,\hat{x})-  \mathcal{F}^{\varepsilon}_{meas}(k,d,\hat{x})|\\\leq \tilde{C}_1 \sqrt{C_1(1+R(\sigma,\eta))}h(\varepsilon)^{\alpha/2}+ \varepsilon\quad\text{for any }(k,d,\hat{x})\in I^{\varepsilon}.
\end{multline}
Hence, it is enough to choose $(\sigma_{\varepsilon},\eta_{\varepsilon})=(\sigma_{h(\varepsilon)},\eta_{h(\varepsilon)})$
with $h(\varepsilon)=\varepsilon^{2/\alpha}$
 to conclude that
\eqref{recovery} holds.

We apply the previous procedure to $(\sigma,\eta)=(\sigma_0,\eta_0)$.
We obtain that, for some constant $C$,
$$F_{\varepsilon}((\sigma_{h(\varepsilon)},\eta_{h(\varepsilon)})\leq C\quad\text{for any }0<\varepsilon\leq 1.$$
If we call $K=\{(\sigma,\eta)\in X:\ \tilde{a}R(\sigma,\eta)\leq C\}$, the equicoercivity property holds.
\end{proof}

We are ready to state our convergence result.
We begin with the following.
\begin{defi}\label{admmeasdef}
 We say that
$I^0$ is \emph{admissible} if 
$I_{w}$ is either finite or an interval, $I^0_{in}$ is finite or $\mathbb{S}^{N-1}$ and $I^{0}_{out}$ is finite or $\mathbb{S}^{N-1}$.

When $I^0$ is admissible, we say that $I^{\varepsilon}$, for $0<\varepsilon\leq 1$, is \emph{admissible} if the following holds.
\begin{enumerate}[(a)]
\item There exists a constant $m>0$ such that, for any $\varepsilon$, $0<\varepsilon\leq 1$ we have
$m^{\varepsilon}_{in}\geq m$ and $m^{\varepsilon}_{out}\geq m$.
\item\label{bcondit}
 There
exists $\gamma>0$ such that $D(\varepsilon)\varepsilon^{2-\gamma}\to 0$ as $\varepsilon\to 0^+$.
\item
Whenever $I^0_{in}$ is finite we define $I^{\varepsilon}_{in}=I^0_{in}$ for any $0<\varepsilon\leq 1$.
Otherwise, we require that, as $\varepsilon\to 0^+$, $\tilde{\mu}^{\varepsilon}_{in}\rightharpoonup \tilde{\mu}^0_{in}$
weakly in the sense of measures where 
there exist an open subset of $\mathbb{S}^{N-1}$, $S_{in}$, and a positive constant $c_{in}$ such that for any compact $K_1\subset S_{in}$ we have
$\tilde{\mu}^0_{in}(K_1)\geq c_{in}\mathcal{H}^{N-1}(K_1)$.
\item Whenever $I^0_{out}$ is finite we define $I^{\varepsilon}_{out}=I^0_{out}$ for any $0<\varepsilon\leq 1$.
Otherwise, we require that, as $\varepsilon\to 0^+$, $\tilde{\mu}^{\varepsilon}_{out}\rightharpoonup \tilde{\mu}^0_{out}$
weakly in the sense of measures where 
there exist an open subset of $\mathbb{S}^{N-1}$, $S_{out}$, and a positive constant $c_{out}$ such that for any compact $K_1\subset S_{out}$ we have
$\tilde{\mu}^0_{out}(K_1)\geq c_{out}\mathcal{H}^{N-1}(K_1)$.
\end{enumerate}
\end{defi}

\begin{theorem}\label{mainthm}
Let $I^0$ and $I^{\varepsilon}$, $0<\varepsilon\leq 1$, be admissible in the sense of Definition~\ref{admmeasdef}. Let $\gamma>0$ be as in \textnormal{(\ref{bcondit})} of
Definition~\ref{admmeasdef}. We fix $0<\alpha<1/2$ and we pick $h=h(\varepsilon)=\varepsilon^{2/\alpha}$ for any $0<\varepsilon\leq 1$.

Then $F_0=\Gamma$-$\displaystyle \lim_{\varepsilon\to 0^+}F_{\varepsilon}$ and the family $\{F_{\varepsilon}\}_{0<\varepsilon\leq 1}$ is equicoercive.

Moreover, if the family $\{(\sigma_{\varepsilon},\eta_{\varepsilon})\}_{0<\varepsilon\leq 1}$ satisfies
$\displaystyle
\lim_{\varepsilon\to 0^+}F_{\varepsilon}(\sigma_{\varepsilon},\eta_{\varepsilon})=\lim_{\varepsilon\to 0^+}\inf_{X}F_{\varepsilon}$,
we have that $\{(\sigma_{\varepsilon},\eta_{\varepsilon})\}_{0<\varepsilon\leq 1}$ is precompact in $X$ and
$$\displaystyle \lim_{\varepsilon\to 0^+}\mathrm{dist}\left(  (\sigma_{\varepsilon},\eta_{\varepsilon}),\argmin(F_0)\right)=0,$$
where the distance is with respect to the norm in $X$.
\end{theorem}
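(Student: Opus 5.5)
The plan is to assemble the pieces already prepared: the $\Gamma$-$\liminf$ inequality comes from the computation before Lemma~\ref{Gammaliminiflemma} together with that lemma, while the recovery sequence and equicoerciveness come from Proposition~\ref{recoveryprop}. The convergence of almost minimizers then follows from Theorem~\ref{fundGammathm}, applied along an arbitrary sequence $\varepsilon_n\to0^+$. The one genuinely new ingredient is upgrading the conclusion of Lemma~\ref{Gammaliminiflemma} from ``$\tilde\mu_0$-a.e.'' to ``for every $(k,d,\hat x)\in I^0$''. Only then can we conclude $F_0(\sigma,\eta)\le\liminf_n F_n(\sigma_n,\eta_n)$.

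\emph{$\Gamma$-$\liminf$.} Take $(\sigma_n,\eta_n)\to(\sigma,\eta)$ in $X$ with $F_n(\sigma_n,\eta_n)\le C_M$. Then $R(\sigma_n,\eta_n)\le C_M/\tilde a$, and Lemma~\ref{propLip} (together with lower semicontinuity of total variation) gives $(\sigma,\eta)\in X_{ad}$ and $\tilde aR(\sigma,\eta)\le\liminf_n\tilde aR(\sigma_n,\eta_n)$. Moreover, all the $(\sigma_n,\eta_n,k)$ and $(\sigma,\eta,k)$ lie in an admissible class as in Remark~\ref{admisremark}, with $L$ replaced by $\max(L,C_M/\tilde a)$. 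Hence \eqref{holderfarfield} applies with uniform constants, and \eqref{crucialobserva} holds; this uses (a) and also (b), which is needed to handle the term $\sqrt{D(\varepsilon_n)}\,\varepsilon_n$ after normalization. Lemma~\ref{Gammaliminiflemma} then gives equality of the far fields $\tilde\mu_0$-a.e.

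To remove the ``a.e.'', fix $k\in I_w$. If $I_w$ is an interval, $\mu_w$-a.e. equality plus continuity in $k$ (Proposition~\ref{admprop}) gives equality for every $k$, once we know it for every $(d,\hat x)$ at almost every $k$. For fixed $k$, the map $(d,\hat x)\mapsto u_\infty(\sigma,\eta;k,d;\hat x)$ is jointly analytic (real-analytic on $\mathbb{S}^{N-1}\times\mathbb{S}^{N-1}$), by \eqref{farfield} and reciprocity \eqref{recrelation}. There are two cases for each factor:
\begin{itemize}
\item[] If $I^0_{in}$ is finite, then $\tilde\mu^0_{in}$ charges each point of $I^0_{in}$.
\item[] Otherwise, $\tilde\mu^0_{in}\ge c_{in}\mathcal H^{N-1}$ on $S_{in}$, so equality holds on a set of full $\mathcal H^{N-1}$-measure in $S_{in}$. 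By continuity it holds on $S_{in}$, and by analyticity in $d$ on the connected sphere ($N\ge2$; for $N=2$ the circle is connected as well) it holds on all of $\mathbb{S}^{N-1}$.
\end{itemize}
The same argument applies in $\hat x$. Thus $\mathcal F^0(\sigma,\eta)=\mathcal F^0(\sigma_0,\eta_0)$ on $I^0$, and $F_0(\sigma,\eta)=\tilde aR(\sigma,\eta)\le\liminf_nF_n(\sigma_n,\eta_n)$.

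\emph{Recovery and conclusion.} Proposition~\ref{recoveryprop}, whose hypothesis is exactly (b) with $h(\varepsilon)=\varepsilon^{2/\alpha}$, supplies recovery sequences (the case $F_0=+\infty$ is trivial) and the compact set $K=\{\tilde aR\le C\}$ giving equicoerciveness. Compactness of $K$ in $X$ follows from the uniform $L^\infty$ bounds, the BV/Lipschitz bounds on the fixed bounded set $\Omega$, and the fact that the functions are constant outside $\Omega$, via the BV compact embedding in $L^1$. Hence $F_0=\Gamma$-$\lim F_\varepsilon$.

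For almost minimizers, $F_\varepsilon(\sigma_\varepsilon,\eta_\varepsilon)$ is eventually bounded, so $R(\sigma_\varepsilon,\eta_\varepsilon)$ is bounded and the family lies in a compact set; this gives precompactness. If $\mathrm{dist}((\sigma_{\varepsilon_n},\eta_{\varepsilon_n}),\argmin F_0)\ge c>0$ along some sequence, extract a convergent subsequence. By Theorem~\ref{fundGammathm} its limit is a minimizer of $F_0$, which is a contradiction.

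The main obstacle is the analytic-continuation step removing the ``a.e.''. Care is needed when $I_w$ is an interval: one must interchange the measures and use joint continuity in $(k,d,\hat x)$, which I would derive from Proposition~\ref{admprop} and \eqref{Lipfarfield}.
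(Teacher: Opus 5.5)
Your proposal is correct and follows the paper's route. You derive the $\Gamma$-$\liminf$ from Lemma~\ref{Gammaliminiflemma} and upgrade it from $\tilde\mu_0$-a.e.\ to everywhere using continuity in $k$ (Proposition~\ref{admprop}) and analyticity in $d$ and $\hat x$. You then take recovery sequences and equicoerciveness from Proposition~\ref{recoveryprop} and conclude with Theorem~\ref{fundGammathm}, filling in details the paper leaves implicit.

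Two small inaccuracies do not affect the argument. First, condition (b) is not needed in the $\liminf$ step: after normalizing by $m^n_{in}m^n_{out}$, the noise term is just $3\mu_w(I_w)\varepsilon_n^2$. Second, \eqref{farfield} together with \eqref{recrelation} gives analyticity in $d$ and in $\hat x$ separately rather than jointly, and separate analyticity is all your continuation argument uses.
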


\begin{proof} Lemma~\ref{Gammaliminiflemma} provides the $\Gamma$-$\liminf$ inequality. To overcome the fact that we have equality only $\tilde{\mu}_0$-a.e., we use continuity with respect to $k$, thanks to Propostion~\ref{admprop}, and the analyticity of the far-field with respect both to $d$ and $\hat{x}$.
The existence of a recovery sequence and equicoerciveness follow from Proposition~\ref{recoveryprop}.
The conclusion is an immediate application of the Fundamental Theorem of $\Gamma$-convergence, Theorem~\ref{fundGammathm}.
\end{proof}

We conclude the paper with an important example of admissibility in the sense of Definition~\ref{admmeasdef} and a remark on the convexity assumption, when $N\geq 3$, of our penetrable obstacles.

\begin{exam}\label{dataexample}
 In this example we show how to construct an admissible family of measurements sets when $I^0_{in}$ is $\mathbb{S}^{N-1}$ and $I^{\varepsilon}_{in}$ is finite. The same construction applies to the case when $I^0_{out}$ is $\mathbb{S}^{N-1}$ and $I^{\varepsilon}_{out}$ is finite

Let us fix a parameter $\delta>0$. Let $S$ be an open subset of $\mathbb{S}^{N-1}$ and let us consider a finite number of points $W(\delta)=\{\omega_i\}_{i=1}^{\ell(\delta)}\subset S$
such that $S\subset \bigcup_{i=1}^{\ell(\delta)}B_{\delta}(\omega_i)$. We can construct $W(\delta)$ in such a way that its number of points $\ell(\delta)$ is
bounded by $C\delta^{-(N-1)}$, $C$ being an absolute constant. Let $\mu_{\delta}=\mathcal{H}^0|_{W(\delta)}$ 
and $\tilde{\mu}_{\delta}=\mu_{\delta}/\ell(\delta)$.

Let $\{\delta_n\}_{n\in\N}$ be a sequence of positive numbers converging to $0$. By compactness, we can assume that $\tilde{\mu}_{\delta_n}$ weakly converges in the sense of measures to
$\tilde{\mu}_0$ as $n\to +\infty$. Hence for $K_1$ compact subset of $S$,
\begin{multline*}
\mathcal{H}^{N-1}(K_1)\leq \mathcal{H}^{N-1}\left(\bigcup_{\omega_i\in K_1}B_{\delta}(\omega_i)\cap\mathbb{S}^{N-1}\right)
\\\leq C_0\delta^{N-1}\ell(\delta_n)\tilde{\mu}_{\delta_n}(K_1)\leq C_0C\tilde{\mu}_{\delta_n}(K_1),
\end{multline*}
$C_0$ being an absolute constant as well, therefore
$$\mathcal{H}^{N-1}(K_1)\leq C_0C\tilde{\mu}_0(K_1).$$

Finally, we assume that both $I^{\varepsilon}_{in}$ and $I^{\varepsilon}_{in}$ are finite,
with number of points given by $\ell(\delta_{in})$ and $\ell(\delta_{out})$, and that, for some absolute constant $C$,
$\ell(\delta_{in})\leq C\delta_{in}^{-(N-1)}$ and $\ell(\delta_{out})\leq C\delta_{out}^{-(N-1)}$
hold.
In order to guarantee that condition \textnormal{(\ref{bcondit})} of
Definition~\ref{admmeasdef} holds, it is enough to choose $\delta_{in}(\varepsilon)$ and $\delta_{out}(\varepsilon)$ so that
$(\delta_{in}(\varepsilon)\delta_{out}(\varepsilon))^{N-1}$ is of the order of $\varepsilon^{\beta}$ for some $0<\beta<2$, or in other words the maximum
number of measurements allowed when the noise level is $\varepsilon$ is about $\varepsilon^{-\beta}$ for some $0<\beta<2$.
\end{exam}

\begin{rem}\label{finalremark}
Let $\{\varepsilon_n\}_{n\in\N}$ be a sequence of positive numbers converging to $0$.
Under the assumptions of Theorem~\ref{mainthm}, for any $n\in\N$, let $(\sigma_n,\eta_n)$ be such that, for some constant $C$, we have
$$F_{\varepsilon_n}(\sigma_n,\eta_n)\leq C.$$
Then the sequence $\{(\sigma_n,\eta_n)\}_{n\in\N}$ is precompact in $X$ and 
$$\displaystyle \lim_{n}\mathrm{dist}\left(  (\sigma_n,\eta_n), S_C\right)=0,$$
where $S_C=\{(\sigma,\eta):\ F_0(\sigma,\eta)\leq C\}$ and
again the distance is with respect to the norm in $X$. That is, up to subsequences, $(\sigma_n,\eta_n)$ converges to $(\sigma_{\infty},\eta_{\infty})$ such that
$\mathcal{F}^0(\sigma_{\infty},\eta_{\infty})=\mathcal{F}^0(\sigma_0,\eta_0)$ and $R(\sigma_{\infty},\eta_{\infty})$ is finite. However we can no longer guarantee that 
$R(\sigma_{\infty},\eta_{\infty})$ is minimal with respect to $R(\sigma,\eta)$ for all $(\sigma,\eta)$ satisfying $\mathcal{F}^0(\sigma,\eta)=\mathcal{F}^0(\sigma_0,\eta_0)$.

In many practical cases, this result, although weaker, might be enough. The important remark is that this weaker result holds, by straightforward modifications of the proofs,
even when we drop the assumption that the connected components of $\Omega$ are convex.
\end{rem}

\bigskip

\noindent
\textbf{Acknowledgements.}
DDD and LR are supported by the Italian MUR through the PRIN 2022 project “Inverse problems in PDE: theoretical and numerical analysis”, project code: 2022B32J5C, CUP F53D23002710006, under the National Recovery and Resilience Plan (PNRR), Italy, Mission 04 Component 2 Investment 1.1 funded by the European Commission - NextGeneration EU programme. DDD is a member of the INdAM research group GNAMPA. LR is also supported by the INdAM research group GNAMPA through 2025 and 2026 projects.


\begin{thebibliography}{99}
\bibitem{Aleetal}
G.~Alessandrini, L.~Rondi, E.~Rosset and S.~Vessella,
\emph{The stability for the Cauchy problem for elliptic equations}, Inverse Problems \textbf{25} (2009) 123004 (47pp).

\bibitem{B02} A.~Braides, \emph{$\Gamma$-convergence for Beginners}, Oxford University Press, Oxford, 2002.

\bibitem{Cak-Vog}F.~Cakoni and M.~S.~Vogelius,
\emph{Transmission Eigenvalues and Non-scattering},
preprint arXiv:2602.06250 (2026).

\bibitem{C78} P.~G.~Ciarlet, \emph{The Finite Element Method for Elliptic Problems}, North-Holland, Amsterdam, 1978.
\bibitem{CK98} D.~Colton and R.~Kress, \emph{Inverse Acoustic and Electromagnetic Scattering Theory}, Springer-Verlag, Berlin Heidelberg New York, 1998.
\bibitem{DM93}  G.~Dal Maso, \emph{An Introduction to $\Gamma$-convergence}, Birkh\"auser, Boston, 1993.

\bibitem{Edel1} 
H.~Edelsbrunner and D.~R.~Grayson, \emph{Edgewise subdivision of a simplex}, in
Proceedings of the Fifteenth Annual Symposium on Computational Geometry
(Miami Beach, FL, 1999 ), ACM, New York, 1999, pp. 24--30.
\bibitem{Edel2} 
H.~Edelsbrunner and D.~R.~Grayson, \emph{Edgewise subdivision of a simplex},
Discrete Comput. Geom. \textbf{24} (2000) 707--719.
\bibitem{EHN96} H.~W.~Engl, M.~Hanke and A.~Neubauer, \emph{Regularization of Inverse Problems}, Kluwer Academic
Publishers, Dordrecht Boston London, 1996.
\bibitem{FeR} A.~Felisi and L.~Rondi, \emph{Full discretization and regularization for the Calder\'on problem}, J. Differential Equations \textbf{410} (2024) 513--577.
\bibitem{FR23} M.~Fornoni and L.~Rondi, \emph{Mosco convergence of Sobolev spaces and Sobolev inequalities for nonsmooth domains}, 
 Calc. Var. Partial Differential Equations \textbf{62} (2023) 15 (30pp).
 \bibitem{GT01} D.~Gilbarg and N.~S.~Trudinger, \emph{Elliptic Partial Differential Equations of Second Order}, Springer-Verlag, Berlin Heidelberg New York, 1998.

\bibitem{Isak}
V.~Isakov,
\emph{On uniqueness in the inverse transmission scattering problem},
Comm. Partial Differential Equations \textbf{15} (1990) 1565--1587.

\bibitem{Isak06}
V.~Isakov,
\emph{Inverse Problems for Partial Differential Equations},
Springer, New York, 2006.

\bibitem{Kir} M.~D.~Kirszbraun,
\emph{\"Uber die zusammenziehende und Lipschitzsche Transformationen},
Fund. Math. \textbf{22} (1934) 77--108. 


\bibitem{Leb} N.~N.~Lebedev, \emph{Special Functions and their Applications}, Prentice-Hall, Englewood Cliffs NJ, 1965.
\bibitem{LRX19} H.~Liu, L.~Rondi and J.~Xiao, \emph{Mosco convergence for $H(\mathrm{curl})$ spaces, higher integrability for Maxwell's equations, and stability in direct and inverse EM scattering problems},  J. Eur. Math. Soc. (JEMS) \textbf{21} (2019) 2945--2993.
\bibitem{MS34} E.~J.~McShane, \emph{Extension of range of functions}, Bull. Amer. Math. Soc. \textbf{40} (1934) 837--842.
\bibitem{Men-Ron} G.~Menegatti and L.~Rondi, \emph{Stability for the acoustic scattering problem for sound-
hard scatterers}, Inverse Probl. Imaging \textbf{7} (2013) 1307--1329.
\bibitem{M63} N. G. Meyers, \emph{An $L^p$-estimate for the gradient of solutions of second order
elliptic divergence equations}, Ann. Scuola Norm. Sup. Pisa (3) \textbf{17} (1963) 189--206.

\bibitem{Riv-Bar-Ob}
C.~Rivas, P.~Barbone and A.~Oberai,
\emph{Divergence of finite element formulations for inverse problems treated as optimization problems},
 J. Phys.: Conf. Ser. \textbf{135} (6th International Conference on Inverse Problems in Engineering: Theory and Practice) (2008) 012088 (8 pp).

\bibitem{Ron08}
L.~Rondi,
\emph{On the regularization of the inverse conductivity problem with discontinuous conductivities}, Inverse Probl. Imaging \textbf{2} (2008) 397--409.


\bibitem{R16} L.~Rondi, \emph{Discrete approximation and regularisation for the inverse conductivity problem}, Rend. Istit. Mat. Univ. Trieste \textbf{48} (2016) 315--352.
\end{thebibliography}
\end{document}